\documentclass[a4paper,10pt,reqno]{amsart}
 
\usepackage[english]{babel}
\usepackage{dsfont} % \mathds
\usepackage{graphicx}
\usepackage{color}
\usepackage{hyperref}
\usepackage{cleveref}
\usepackage{enumerate}
\usepackage{amssymb,empheq} % \upharpoonright
\usepackage{amsthm} % \newtheorem
\usepackage{xcolor}
\usepackage[foot]{amsaddr}

\allowdisplaybreaks

\newtheorem{thm}{Theorem}[section]
\newtheorem{thmDaniell}[thm]{Daniell's Representation Theorem}
\newtheorem{thmHahnM}[thm]{Hahn--Mazurkiewicz' Theorem}
\newtheorem{thmJankoff}[thm]{Jankoff's Theorem}
\newtheorem{thmRichter}[thm]{Richter's Theorem}
\newtheorem{thmHaus}[thm]{Hausdorff Moment Problem}
\newtheorem{lem}[thm]{Lemma}
\newtheorem{prop}[thm]{Proposition}
\newtheorem{cor}[thm]{Corollary}

\theoremstyle{definition}

\newtheorem{dfn}[thm]{Definition}

\newtheorem{exm}[thm]{Example}

\newtheorem{open}[thm]{Open Problem}

\theoremstyle{remark}
\newtheorem{rem}[thm]{Remark}

%\numberwithin{equation}{section}

\newcommand{\exmsym}{\hfill$\circ$}

% Zahlenbereiche/-mengen
\newcommand{\cset}{\mathds{C}}
\newcommand{\kset}{\mathds{K}}
\newcommand{\nset}{\mathds{N}}
\newcommand{\pset}{\mathds{P}}
\newcommand{\qset}{\mathds{Q}}
\newcommand{\rset}{\mathds{R}}

% mathematische Bezeichnungen

\newcommand{\diff}{\mathrm{d}}

\newcommand{\supp}{\mathrm{supp}\,}

\newcommand{\id}{\mathrm{id}}

\newcommand{\modzero}{\mathrm{mod}0}

% geschwungene Buchstaben
\newcommand{\cA}{\mathcal{A}}
\newcommand{\cB}{\mathcal{B}}

\newcommand{\cF}{\mathcal{F}}
\newcommand{\cH}{\mathcal{H}}

\newcommand{\cL}{\mathcal{L}}

\newcommand{\cP}{\mathcal{P}}

\newcommand{\cU}{\mathcal{U}}
\newcommand{\cV}{\mathcal{V}}
\newcommand{\cW}{\mathcal{W}}
\newcommand{\cX}{\mathcal{X}}
\newcommand{\cY}{\mathcal{Y}}
\newcommand{\cZ}{\mathcal{Z}}

% sanserif Buchstaben

% Frakturbuchstaben

\newcommand{\fB}{\mathfrak{B}}

\newcommand{\trans}{\leadsto}
\newcommand{\ctrans}{\overset{\text{\textit{c}}}{\leadsto}}
\newcommand{\strans}{\overset{\text{\textit{s}}}{\leadsto}}
\newcommand{\sctrans}{\overset{\text{\textit{sc}}}{\leadsto}}
\newcommand{\downtrans}{\rotatebox[origin=c]{-90}{$\trans$}}
\newcommand{\uptrans}{\rotatebox[origin=c]{90}{$\trans$}}

% Literatur
\bibliographystyle{elsarticle-num}

\setcounter{tocdepth}{3}

\author{Philipp J.\ di~Dio}

\address{Technische Universit\"at Berlin, Institut f\"ur Mathematik, Stra\ss{}e des 17.\ Juni 136, D-10623 Berlin, Germany}
\email{didio@tu-berlin.de}

\subjclass[2010]{44A60.}
%\subjclass[2010]{Primary 44A60; Secondary ???, ???.}

\keywords{moment functional, representation, measure}

\title[Transformations of Moment Functionals]{Transformations of Moment Functionals}
% and Isomorphic Representations}
%\begin{title}{}
%\end{title}

%%%%%%%%%%%%%%%%%%%
\begin{document}%%%
%%%%%%%%%%%%%%%%%%%

\begin{abstract}
In measure theory several results are known how measure spaces are transformed into each other. But since moment functionals are represented by a measure we investigate in this study the effects and implications of these measure transformations to moment funcationals. We gain characterizations of moments functionals. Among other things we show that for a compact and path connected set $K\subset\rset^n$ there exists a measurable function $g:K\to [0,1]$ such that any linear functional $L:\rset[x_1,\dots,x_n]\to\rset$ is a $K$-moment functional if and only if it has a continuous extension to some $\overline{L}:\rset[x_1,\dots,x_n]+\rset[g]\to\rset$ such that \mbox{$\tilde{L}:\rset[t]\to\rset$} defined by $\tilde{L}(t^d) := \overline{L}(g^d)$ for all $d\in\nset_0$ is a $[0,1]$-moment functional (Hausdorff moment problem). Additionally, there exists a continuous function $f:[0,1]\to K$ independent on $L$ such that the representing measure $\tilde{\mu}$ of $\tilde{L}$ provides the representing measure $\tilde{\mu}\circ f^{-1}$ of $L$. We also show that every moment functional $L:\cV\to\rset$ is represented by $\lambda\circ f^{-1}$ for some measurable function $f:[0,1]\to\rset^n$ where $\lambda$ is the Lebesgue on $[0,1]$.
\end{abstract}

\maketitle

\tableofcontents

\section{Introduction}%%%
%%%%%%%%%%%%%%%%%%%%%%%%%

Linear functionals $L:\cV\to\kset$ with $\kset = \rset$ or $\cset$ belong to the most important structures in mathematics, e.g.\ for separation arguments. If $\cV$ is a vector space of functions $v:\cX\to\kset$ then $L$ is called a moment functional if it is represented by a (non-negative) measure $\mu$ on $\cX$:
\[L(v)=\int_\cX v(x)~\diff\mu(x) \qquad\text{for all}\ v\in\cV.\]
If $\supp\mu\subseteq K\subseteq\cX$, then $L$ is called a $K$-moment functional.

Among the moment functionals the most important ones act on polynomials $\cV = \rset[x_1,\dots,x_n]$ on some $K\subseteq\rset^n$, $n\in\nset$. Here, the name \emph{moment} actually comes from. If $K$ is closed then Haviland's Theorem \cite{havila35,havila36} states that a linear functional $L:\rset[x_1,\dots,x_n]\to\rset$ is a $K$-moment functional if and only if $L(p)\geq 0$ for all $p\in\rset[x_1,\dots,x_n]$ with $p\geq 0$. On the other side $p\geq 0$ on $K$ if and only if $L(p)\geq 0$ for all $K$-moment functionals $L$ since every point evaluation is a moment functional. These are the two directions in the duality theorem and the many connections between the moment problem (deciding when a linear functional is a moment functional) and non-negative polynomials (and therefore optimization and many other applications) only start here. See e.g.\ \cite{ahiezer62}, \cite{akhiezClassical}, \cite{didioConeArXiv}, \cite{curto00}, \cite{fialkoMomProbSurv}, \cite{kreinMarkovMomentProblem}, \cite{lasserreSemiAlgOpt}, \cite{lauren09}, \cite{marshallPosPoly}, \cite{schmudMomentBook}, and references within for more on the moment problem, the connection to non-negative polynomials, and applications.

Besides the one-point evaluation $L(f) = f(x)$ the following is probably the simplest moment functional.

\begin{exm}\label{exm:lebOnUnit}
Let $\lambda$ be the Lebesgue measure on $[0,1]$ and let $\cV = \rset[t]$. Then the functional
\begin{equation}\label{eq:lebesgueMeasure}
L_{\text{Leb}}:\rset[t]\to\rset\quad\text{with}\quad L_{\text{Leb}}(t^d) = \int_0^1 t^d~\diff\lambda(t) = \frac{1}{d+1}\quad \text{for all}\ d\in\nset_0,
\end{equation}
is the unique linear functional such that $L(t^d) = \frac{1}{d+1}$ holds for all $d\in\nset_0$.\exmsym
\end{exm}

Besides this the general $[0,1]$-moment problem (also called the Hausdorff moment problem) is the easiest to decide.

\begin{thmHaus}[see \cite{hausdo21} or {\cite[Thm.\ 1.1 and 1.2]{kreinMarkovMomentProblem}}]\label{thm:haus}\
\begin{enumerate}[(a)]
\item Let $d\in\nset$. The following are equivalent.
\begin{enumerate}[(i)]
\item $L:\rset[x]_{\leq d}\to\rset$ is a $[0,1]$-moment functional.

\item $L(p)\geq 0$ holds for all $p\in\rset[x]_{\leq d}$ such that $p\geq 0$ on $[0,1]$.
\end{enumerate}

\item The following are equivalent.
\begin{enumerate}[(i)]
\item $L:\rset[x]\to\rset$ is a $[0,1]$-moment functional.

\item $L(p)\geq 0$ holds for all $p\in\rset[x]$ such that $p\geq 0$ on $[0,1]$.
\end{enumerate}
\end{enumerate}
\end{thmHaus}

This problem is fully solved since by the univariate Positivstellensatz every polynomial $p\in\rset[x]$ which is non-negative on $[0,1]$ has the form
\[p(x) = p_1(x)+ x\cdot(1-x)\cdot p_2(x) = q_1(x) + x\cdot q_2(x) + (1-x)\cdot q_3(x)\]
for some $p_i$, $q_i\in\sum\rset[x]^2$ sums of squares. This also holds with the degree bound $\deg p \leq d$.

In higher dimensions the problem is not completely solved and several problems appear, especially since in $\rset^n$ with $n\geq 2$ there are non-negative polynomials with are not sums of squares or a tuple $(X_1,\dots,X_n)$ of pairwise commuting and symmetric multiplication operators must have an extension to pairwise commuting and self-adjoint multiplication operators $(\overline{X_1},\dots,\overline{X_n})$.

To understand moment functionals better and to simplify them we investigate in this article the possibility of transforming a linear (moment) functional into another linear (moment) functional based on several isomorphism and transformation results between measure spaces. But before we give the formal definition of a transformation of a linear functional let us have a look at the following theorem to see what kind of results we are looking for.

\begin{thm}\label{thm:intro}
Let $S$ be a Souslin set (e.g.\ a Borel set $S\subseteq\rset^n$), $\cV$ be a vector space of real measurable functions $v:S\to\rset$, and $L:\cV\to\rset$ be a linear functional. Then the following are equivalent:
\begin{enumerate}[(i)]
\item $L:\cV\to\rset$ is a $S$-moment functional.

\item There exists a measurable function $f:[0,1]\to S$ such that
\begin{equation}\label{eq:momFctlRepr}
L(v) = \int_0^1 v(f(t))~\diff\lambda(t)
\end{equation}
for all $v\in\cV$ where $\lambda$ is the Lebesgue measure on $[0,1]$, i.e., $\lambda\circ f^{-1}$ is a representing measure of $L$.
\end{enumerate}
\end{thm}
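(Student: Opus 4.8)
The implication (ii) $\Rightarrow$ (i) is immediate: if $L(v) = \int_0^1 v(f(t))\,\diff\lambda(t)$ for a measurable $f:[0,1]\to S$, then the image measure $\mu := \lambda\circ f^{-1}$ is a non-negative measure on $S$ with $\supp\mu\subseteq S$, and the change-of-variables formula gives $L(v) = \int_S v\,\diff\mu$ for all $v\in\cV$, so $L$ is an $S$-moment functional. The substance is in (i) $\Rightarrow$ (ii).

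The plan for (i) $\Rightarrow$ (ii) is to start from a representing measure $\mu$ for $L$ with $\supp\mu\subseteq S$ and reduce to a probability measure. First I would dispose of the trivial case $L=0$ (take $f$ constant equal to any point of $S$, or handle $\mu=0$ directly). Otherwise, after rescaling we may assume $\mu(S)=1$, i.e.\ $\mu$ is a Borel probability measure on the Souslin set $S$. The key tool is the classical isomorphism theorem for standard (Lebesgue--Rokhlin) measure spaces: a Souslin set carries a natural measurable structure on which every atomless Borel probability measure is isomorphic $\modzero$ to the Lebesgue measure $\lambda$ on $[0,1]$, and more generally any Borel probability measure on $S$ is the image of $\lambda$ under some measurable map $f:[0,1]\to S$. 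Concretely, one decomposes $\mu = \mu_{\mathrm{at}} + \mu_{\mathrm{c}}$ into its atomic and continuous parts; the continuous part is pushed forward from an interval via the isomorphism theorem for atomless standard probability spaces, while the (at most countably many) atoms $x_k$ with masses $p_k$ are realized by mapping a corresponding partition of $[0,1]$ into subintervals of length $p_k$ to the constants $x_k$. Gluing these measurable maps on the corresponding pieces of $[0,1]$ yields a single measurable $f:[0,1]\to S$ with $\lambda\circ f^{-1} = \mu$, and then \eqref{eq:momFctlRepr} follows from the change-of-variables formula, exactly as in the easy direction.

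The main obstacle is the measurable-isomorphism step itself: one must be careful that $S$ being Souslin (analytic) — rather than Polish or Borel — still suffices to run the isomorphism theorem. The point is that a Souslin subset of $\rset^n$, equipped with its Borel $\sigma$-algebra, is a \emph{standard Borel space} (every Souslin set is Borel-isomorphic to a Borel subset of $[0,1]$, hence to a Polish space or to a countable discrete space), so the Borel structure is as good as that of a Polish space, and any Borel probability measure on it makes it a Lebesgue--Rokhlin space. I would cite this from a standard reference on descriptive set theory or on the structure of measure spaces (e.g.\ Kechris, or Bogachev's \emph{Measure Theory}, or Parthasarathy), isolating the precise statement ``every Borel probability measure on a Souslin set is the pushforward of Lebesgue measure on $[0,1]$ under a Borel-measurable map.'' Once that citation is in place, the rest is bookkeeping: splitting off atoms, choosing the partition of $[0,1]$, and verifying measurability of the glued map.
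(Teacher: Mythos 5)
Your proposal takes essentially the same route as the paper: the paper's proof of (i)$\Rightarrow$(ii) consists precisely of citing the fact that every Borel probability measure on a Souslin space is the image of Lebesgue measure on $[0,1]$ under a measurable map (\Cref{prop:fromLebesgue}, quoted from Bogachev) and then applying the change-of-variables formula of \Cref{lem:integralTrans}, which is exactly your plan, your atomic/atomless decomposition being just a sketch of how that cited result is obtained from the isomorphism theorem. One caution: your parenthetical justification that a Souslin set is a standard Borel space (``Borel-isomorphic to a Borel subset of $[0,1]$'') is not correct in general, since Souslin sets need not be Borel (cf.\ \Cref{rem:nonBorelSouslin}; \Cref{thm:isoOne} only yields an isomorphism onto a Souslin subset of $[0,1]$), but this is harmless because the pushforward statement you propose to cite is valid for Souslin spaces as such.
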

\begin{proof}
(i)$\to$(ii): Let $\mu$ be a representing measure of $L$. By \Cref{prop:fromLebesgue} there exists a measurable function $f:[0,1]\to S$ such that $\mu = \lambda\circ f^{-1}$ and hence
\[L(v) = \int_S v(x)~\diff\mu(x) = \int_S v(x)~\diff(\lambda\circ f^{-1})(x) \overset{\text{\Cref{lem:integralTrans}}}{=} \int_0^1 v(f(t))~\diff\lambda(t) \tag{$*$}\]
for all $v\in\cV$.

(ii)$\to$(i): $\lambda\circ f^{-1}$ is a representing measure of $L$ by \Cref{lem:integralTrans}.
\end{proof}

\Cref{thm:intro} can be seen as a complete characterization of ($S$-)moment functionals, i.e., every moment functional $L:\cV\to\rset$ has the form (\ref{eq:momFctlRepr}) for some $f:[0,1]\to S$. Additionally, \Cref{thm:intro} also shows that every moment functional $L$ is represented by $\lambda\circ f^{-1}$ for a measurable function $f:[0,1]\to\rset^n$.

Hence, the aim of this paper is to characterize and represent moment functionals in the form of (\ref{eq:momFctlRepr}) and especially to find additional properties of $f:[0,1]\to S$.

The notation and result in \Cref{thm:intro} stimulate the notation of a transformation of a linear (moment) functional. We introduce the following definitions.

\begin{dfn}\label{dfn:transform}
Let $\cX$ and $\cY$ be two Souslin spaces, $\cU$ and $\cV$ two vector spaces of real measurable functions on $\cX$ resp.\ $\cY$, and $K:\cU\to\rset$ and $L:\cV\to\rset$ be two linear functionals. We say \emph{$L$ (continuously) transforms into $K$}, symbolized by $L\trans K$ resp.\ $L\ctrans K$, if there exists a Borel (resp.\ continuous) function $f:\cX\to\cY$ such that $\cV\circ f \subseteq\cU $ and $L(v) = K(v\circ f)$ for all $v\in\cV$.

We say \emph{$L$ strongly (and continuously) transforms into $K$}, symbolized by $L\strans K$ resp.\ $L\sctrans K$, if there exists a surjective Borel (resp.\ surjective and continuous) function $f:\cX\twoheadrightarrow\cY$ such that $\cV\circ f =\cU $ and $L(v) = K(v\circ f)$ for all $v\in\cV$.
\end{dfn}

If in this definition of a transformation $\trans$ a function $f:\cX\to\cY$ is fixed because it has special properties, then we denote that in the transformation by $\overset{f}{\trans}$. Of course, we have the implications
\begin{align*}
L\strans K \qquad&\Rightarrow\qquad L\trans K
\intertext{and}
L\sctrans K \qquad&\Rightarrow\qquad L\ctrans K \qquad\Rightarrow\qquad L\trans K.
\end{align*}
With this definition \Cref{thm:intro} can be reformulated to the following statement.

\begin{cor}\label{cor:intro}
$L:\cV\to\rset$ is a moment functional iff $L\leadsto [K:\cL^1([0,1],\lambda)\to\rset]$.
\end{cor}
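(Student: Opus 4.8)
The plan is to obtain this as a direct rephrasing of \Cref{thm:intro} through the dictionary set up in \Cref{dfn:transform}. Throughout, $K$ denotes the integration functional $w\mapsto\int_0^1 w\,\diff\lambda$ on $\cL^1([0,1],\lambda)$ --- a space of real measurable functions on $[0,1]$ --- which is itself a moment functional, its representing measure being $\lambda$, in line with \Cref{exm:lebOnUnit}. Let $\cY$ be the Souslin space carrying the functions in $\cV$. Reading \Cref{dfn:transform} with $\cX=[0,1]$ and $\cU=\cL^1([0,1],\lambda)$, the relation $L\trans K$ says precisely: there is a Borel map $f\colon[0,1]\to\cY$ with $\cV\circ f\subseteq\cL^1([0,1],\lambda)$ and
\[
L(v)=K(v\circ f)=\int_0^1 v(f(t))\,\diff\lambda(t)\qquad\text{for all } v\in\cV ,
\]
which is exactly item (ii) of \Cref{thm:intro} (taking $S=\cY$ there), once two bookkeeping points are cleared.

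The first is that the side condition $\cV\circ f\subseteq\cL^1([0,1],\lambda)$ demanded by \Cref{dfn:transform} is automatic: as $L$ is $\rset$-valued, each $\int_0^1 v(f(t))\,\diff\lambda(t)=L(v)$ is a finite real number, so $v\circ f\in\cL^1([0,1],\lambda)$ for every $v\in\cV$. The second is a measurability match: the map delivered by \Cref{thm:intro}(ii) is produced via \Cref{prop:fromLebesgue} as a measurable map $[0,1]\to\cY$, and one should check it may be taken Borel, as \Cref{dfn:transform} requires; this is either built into \Cref{prop:fromLebesgue} or arranged by modifying $f$ on a $\lambda$-null set.

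Granting this, both directions are immediate. If $L$ is a moment functional, then \Cref{thm:intro}, direction (i)$\to$(ii), furnishes $f\colon[0,1]\to\cY$ with $L(v)=\int_0^1 v(f(t))\,\diff\lambda(t)$ for all $v\in\cV$; the first bookkeeping point gives $\cV\circ f\subseteq\cL^1([0,1],\lambda)$, and then $L(v)=K(v\circ f)$, i.e.\ $L\trans K$. Conversely, if $L\trans K$ with witnessing map $f$, set $\mu:=\lambda\circ f^{-1}$; this is a finite nonnegative Borel measure on $\cY$, and \Cref{lem:integralTrans} yields
\[
\int_\cY v\,\diff\mu=\int_0^1 v(f(t))\,\diff\lambda(t)=K(v\circ f)=L(v)\qquad\text{for all } v\in\cV ,
\]
so $\mu$ represents $L$; alternatively one quotes \Cref{thm:intro}, direction (ii)$\to$(i), verbatim.

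I anticipate no real obstacle here: the content is entirely that of \Cref{thm:intro}, and the only care needed is the two bookkeeping items above --- the automatic integrability of the functions $v\circ f$ and the Borel (versus merely Lebesgue) measurability of $f$ --- together with pinning down the intended, natural meaning of $K$ as the Lebesgue-integral functional on $\cL^1([0,1],\lambda)$.
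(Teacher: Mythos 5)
Your proposal is correct and is essentially the paper's own route: the corollary is intended as a direct reformulation of \Cref{thm:intro} through \Cref{dfn:transform}, with $K$ the Lebesgue-integration functional on $\cL^1([0,1],\lambda)$, and your two bookkeeping remarks (automatic integrability of $v\circ f$ once the representation holds, and Borel measurability of the map from \Cref{prop:fromLebesgue}) are exactly the implicit points the paper glosses over.
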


The paper is structured as follows. In \Cref{sec:prel} we will give the preliminaries on measure theory and integration. Since most of the measure theoretic terminology and results in \Cref{sec:prel} (Souslin sets, Lebesgue--Rohlin spaces, isomorphisms between measure spaces etc.) have to our knowledge never been used in connection with the moment problem before, we give the complete definitions, results, and important examples which are essential for this paper (but without proofs).

In \Cref{sec:trans} we present basic properties of transformations (\Cref{dfn:transform}). E.g.\ in \Cref{thm:transMomProp} we show that if there exists a transformation $L\trans K$ and $K$ is a moment functional, then also $L$ is a moment functional. So, the transformation $\trans$ (literally and symbolically) aims at moment functionals $K$ to determine whether already $L$ was a moment functional.

\Cref{sec:main} contains then the main results where several non-trivial transformations to $[0,1]$- or $I_k$-moment functionals are presented, $I_k$ finite union of compact intervals in $\rset$. We show, which might already be apparent from \Cref{thm:intro}, that the structure of possible moment functionals $K$ are quite simple. These are always $[0,1]$- or $I_k$-moment functionals. However, this simplicity of $K$ has the price that $f:[0,1]\to S$ has little properties. In the worst case as in \Cref{thm:intro} we only have that $f$ is measurable. We therefore also present results where $f$ is at least continuous and can therefore approximated by polynomials on $[0,1]$ in the supremum norm.

In \Cref{sec:open} we give the conclusions and open problems. Additionally, we give and discuss several open questions, especially the restriction that $f$ is a rational or a polynomial map.

\section{Preliminaries: Measure Theory and the Lebesgue Integral}%%%
%%%%%%%%%%%%%%%%%%%%%%%%%%%%%%%%%%%%%%%%%%%%%%%%%%%%%%%%%%%%%%%%%%%%
\label{sec:prel}

We give here the measure theoretic results used in our paper. Of course, it is possible to go directly to \Cref{sec:trans} and the main results in \Cref{sec:main} and consult this \Cref{sec:prel} if necessary while reading the results and proofs.

In this article we follow the monographs \cite{federerGeomMeasTheo}, \cite{liebAnalysis}, and \cite{bogachevMeasureTheory} for the measure theory and Lebesgue integral. We denote by $\cP(\cX)$ the \emph{power set} of a set $\cX$, i.e., the set of all subsets of $\cX$. Let $\cA\subseteq\cP(\cX)$ be a $\sigma$-algebra on a set $\cX$, then we call $(\cX,\cA)$ a \emph{measurable space}. A function $f:(\cX,\cA)\to(\cY,\cB)$ between measurable spaces is called \emph{measurable} if $f^{-1}(B)\in\cA$ for all $B\in\cB$ holds.

Given $\cF\subseteq\cP(\cX)$, then by $\sigma(\cF)$ we denote the $\sigma$-algebra generated by $\cF$, i.e., the smallest $\sigma$-algebra containing $\cF$. The \emph{Borel $\sigma$-algebra} $\fB(\cX)$ of a topological (e.g.\ Hausdorff) space $\cX$ is generated by all open sets in $\cX$.

Given a measurable space $(\cX,\cA)$, a \emph{measure} $\mu$ on $(\cX,\cA)$ is a countably additive function $\mu:\cA\to [0,\infty]$. I.e., dissident from \cite{bogachevMeasureTheory} for us all measures are non-negative if not otherwise explicitly stated as \emph{signed}. $(\cX,\cA,\mu)$ is called a \emph{measure space}. $(\cX,\cA,\mu)$ is called \emph{probability measure space} if additionally $\mu(\cX)=1$. An \emph{atom} $\delta_x$ is a measure such that
\[\delta_x(A) = \begin{cases} 1 &\text{for}\ x\in A\\ 0 & \text{for}\ x\not\in A\end{cases}.\]
The special (Carath\'eodory) outer measures are used and treated in \Cref{sec:daniell}.

Let $\cX$ be a topological (e.g.\ locally compact Hausdorff) space. A measure on $(\cX,\fB(\cX))$ is called \emph{Borel measure}. A \emph{Radon measure} $\mu$ is a measure over $(\cX,\fB(\cX))$ such that $\mu(K)<\infty$ for all compact $K\subseteq\cX$ and $\mu(V) = \sup\{\mu(K) \,|\, K$ is compact, $K\subseteq V\}$. By $\lambda^n$ we denote the $n$-dimensional Lebegue measure on $(\rset^n,\fB(\rset^n))$.

Let $(\cX,\cA,\mu)$ be a measure space and $f:\cX\to [0,\infty]$ be a non-negative\linebreak measurable function. The \emph{Lebesgue integral} is defined by
\begin{equation}\label{eq:lebesgue}
\int_\cX f(x)~\diff\mu(x) := \int_0^\infty \mu(f^{-1}((t,\infty)))~\diff t
\end{equation}
since $h(t) := \mu(f^{-1}((t,\infty)))$ is non-increasing, i.e., Riemann integrable, with the Riemann integral $\int_0^\infty h(t)~\diff t$. $f$ is called $\mu$-integrable if (\ref{eq:lebesgue}) is finite. A general measurable function $f:\cX\to[-\infty,\infty]$ is called $\mu$-integrable if $f_+ := \max(f,0)$ and $f_- := -\min(f,0)$ are $\mu$-integrable. The Lebesgue integral is then defined by
\[\int_\cX f(x)~\diff\mu(x) := \int_\cX f_+(x)~\diff\mu(x) - \int_\cX f_-(x)~\diff\mu(x).\]
We have the following transformation formula.

\begin{lem}\label{lem:integralTrans}
Let $f:(\cY,\cB)\to(\rset,\fB(\rset))$ and $g:(\cX,\cA)\to (\cY,\cB)$ be measurable functions, $\mu$ a measure on $(\cX,\cA)$ such that $f\circ g$ is $\mu$-integrable. Then $\mu\circ g^{-1}$ is a measure on $(\cY,\cB)$ and $f$ is $\mu\circ g^{-1}$-integrable with
\begin{equation}\label{eq:integralTrans}
\int_\cX (f\circ g)(x)~\diff\mu(x) = \int_\cY f(y)~\diff(\mu\circ g^{-1})(y).
\end{equation}
\end{lem}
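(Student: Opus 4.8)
The plan is to build the identity up from indicator functions and then exploit the layer-cake definition \eqref{eq:lebesgue} of the Lebesgue integral, which makes the non-negative case almost immediate. First I would check that $\mu\circ g^{-1}$ is a measure on $(\cY,\cB)$: since $g$ is measurable, $g^{-1}(B)\in\cA$ for every $B\in\cB$, so $(\mu\circ g^{-1})(B):=\mu(g^{-1}(B))$ is well defined with values in $[0,\infty]$, $(\mu\circ g^{-1})(\emptyset)=0$, and for pairwise disjoint $B_1,B_2,\dots\in\cB$ the preimages $g^{-1}(B_1),g^{-1}(B_2),\dots$ are pairwise disjoint with $g^{-1}\big(\bigcup_k B_k\big)=\bigcup_k g^{-1}(B_k)$, so countable additivity of $\mu$ transfers to $\mu\circ g^{-1}$. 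Moreover $f\circ g:(\cX,\cA)\to(\rset,\fB(\rset))$ is measurable as a composition of measurable maps, so the left-hand side of \eqref{eq:integralTrans} is meaningful.

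Next, for a non-negative measurable $f:\cY\to[0,\infty]$ I would use \eqref{eq:lebesgue} directly. For every $t\geq 0$ one has the set identity $(f\circ g)^{-1}((t,\infty))=g^{-1}\big(f^{-1}((t,\infty))\big)$, hence
\[\mu\big((f\circ g)^{-1}((t,\infty))\big)=\mu\big(g^{-1}(f^{-1}((t,\infty)))\big)=(\mu\circ g^{-1})\big(f^{-1}((t,\infty))\big),\]
and integrating this equality of non-increasing functions of $t$ over $(0,\infty)$ gives $\int_\cX (f\circ g)~\diff\mu=\int_\cY f~\diff(\mu\circ g^{-1})$. (Equivalently, one first checks the formula for $f=\one_B$, extends by linearity to non-negative simple functions, and passes to the limit by monotone convergence; the layer-cake argument simply shortcuts this.)

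Finally, for a general $\mu$-integrable $f:\cY\to[-\infty,\infty]$, write $f=f_+-f_-$ with $f_\pm\geq 0$ measurable, and note $(f_\pm)\circ g=(f\circ g)_\pm$. Applying the non-negative case to $f_+$ and to $f_-$ yields $\int_\cY f_\pm~\diff(\mu\circ g^{-1})=\int_\cX (f\circ g)_\pm~\diff\mu<\infty$, since $f\circ g$ is $\mu$-integrable; hence $f$ is $\mu\circ g^{-1}$-integrable, and subtracting the two identities gives \eqref{eq:integralTrans}.

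There is no serious obstacle here, as this is a routine change-of-variables statement; the only point requiring a little care is the bookkeeping in the last step — deducing $\mu\circ g^{-1}$-integrability of $f$ from $\mu$-integrability of $f\circ g$ before splitting the integral — which is precisely why the non-negative case should be established for \emph{all} non-negative measurable functions rather than only for integrable ones.
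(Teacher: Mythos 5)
Your proof is correct and follows essentially the same route as the paper: the key step in both is the preimage identity $(f\circ g)^{-1}((t,\infty))=g^{-1}\bigl(f^{-1}((t,\infty))\bigr)$ combined with the layer-cake definition \eqref{eq:lebesgue} of the integral for non-negative $f$. The paper simply leaves the verification that $\mu\circ g^{-1}$ is a measure and the reduction via $f=f_+-f_-$ (with $(f\circ g)_\pm=f_\pm\circ g$) implicit, which you spell out.
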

\begin{proof}
It is sufficient to show (\ref{eq:integralTrans}) for $f\geq 0$:
\begin{align*}
\int_\cX (f\circ g)(x)~\diff\mu(x) &= \int_0^\infty \mu((f\circ g)^{-1}((t,\infty)))~\diff t
&&= \int_0^\infty \mu(g^{-1}(f^{-1}((t,\infty))))~\diff t\\
&= \int_0^\infty (\mu\circ g^{-1})(f^{-1}((t,\infty)))~\diff t
\!\!\!\!\! &&= \int_\cY f(y)~\diff(\mu\circ g^{-1})(y).\qedhere
\end{align*}
\end{proof}

We have the first result from measure theory. We apply it in \Cref{prop:oneLebesgueDirection}.

\begin{prop}[see e.g.\ {\cite[Prop.\ 9.1.11]{bogachevMeasureTheory}}]\label{prop:makingLebesgue}
Let $\mu$ be an atomless probability measure on a measurable space $(\cX,\cA)$. Then there exists an $\cA$-measurable function $f:\cX\to [0,1]$ such that $\mu\circ f^{-1} = \lambda$ is the Lebesgue measure on $[0,1]$.
\end{prop}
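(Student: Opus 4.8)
The plan is to produce $f$ by encoding a dyadic refinement of $\cX$ into the binary expansion of a point of $[0,1]$. The one genuinely measure-theoretic ingredient is the classical splitting lemma for atomless measures (in essence due to Sierpi\'nski): if $A\in\cA$ with $\mu(A)>0$ and $0\le c\le\mu(A)$, then there is $B\in\cA$ with $B\subseteq A$ and $\mu(B)=c$; in particular every set of positive measure splits into two measurable halves of equal measure. I would recall this first and note that its proof is a transfinite exhaustion (or Zorn) argument in which atomlessness is exactly what forces the residual mass to vanish; this is the only place the hypothesis is used, so I expect it to be the main obstacle, everything downstream being bookkeeping.

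Using the splitting lemma I would build recursively a binary tree of measurable sets indexed by finite $0$--$1$ strings. Put $X_\emptyset=\cX$; given $X_i$ for each $i\in\{0,1\}^n$ with $\mu(X_i)=2^{-n}$ and $\cX=\bigsqcup_{i\in\{0,1\}^n}X_i$, split each $X_i=X_{i0}\sqcup X_{i1}$ into halves of measure $2^{-(n+1)}$. For $n\ge1$ set $A_n:=\bigsqcup\{X_i:i\in\{0,1\}^n,\ i_n=1\}$, so that $\mu(A_n)=\tfrac12$, and define
\[
f:=\sum_{n=1}^\infty 2^{-n}\,\one_{A_n}:\cX\longrightarrow[0,1].
\]
The partial sums are $\cA$-measurable, hence so is $f$; and for each $x$ the value $f(x)=\sum_{n\ge1}2^{-n}\varepsilon_n(x)\in[0,1]$ is simply the real number whose binary digits $\varepsilon_n(x)\in\{0,1\}$ record which half of the tree $x$ descends into at level $n$.

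It remains to identify $\mu\circ f^{-1}$ with $\lambda$. Fix $n$ and $k\in\{0,\dots,2^n\}$ with binary digits $k_1,\dots,k_n$. Splitting $f(x)$ into its level-$n$ dyadic part $m(x)2^{-n}$ and a tail in $[0,2^{-n}]$, a short computation shows that $f^{-1}\bigl([0,k2^{-n}]\bigr)$ coincides with $\bigsqcup_{l=0}^{k-1}X_{(l_1,\dots,l_n)}$ up to the set where the tail digits are eventually constant, i.e.\ up to a subset of $\bigcup_{m\ge1}\bigl(\bigcap_{j>m}A_j\cup\bigcap_{j>m}(\cX\setminus A_j)\bigr)$, which is $\mu$-null because $\mu(\bigcap_{m<j\le N}A_j)=2^{m-N}\to0$ and likewise for the complements. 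Hence
\[
\mu\bigl(f^{-1}([0,k2^{-n}])\bigr)=\sum_{l=0}^{k-1}\mu\bigl(X_{(l_1,\dots,l_n)}\bigr)=k\,2^{-n}=\lambda\bigl([0,k2^{-n}]\bigr).
\]
Thus the distribution functions $t\mapsto\mu(f^{-1}([0,t]))$ and $t\mapsto\lambda([0,t])=t$ on $[0,1]$ agree on the dense set of dyadic rationals; since both are non-decreasing they agree for every $t\in[0,1]$. Two Borel probability measures on $[0,1]$ with the same distribution function coincide, so $\mu\circ f^{-1}=\lambda$ on $\fB([0,1])$, which is the claim.

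In summary, the proof has three ingredients: the atomless splitting lemma (the real content), an explicit dyadic-tree construction of $f$ as a pointwise-convergent series of indicator functions, and a routine verification of equality of distribution functions. The only subtlety beyond the splitting lemma is the behaviour at dyadic rationals, dealt with above by discarding the $\mu$-null set on which two binary expansions compete.
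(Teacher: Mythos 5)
Your argument is correct, but note that there is nothing in the paper to compare it against: the proposition is quoted verbatim from Bogachev (Prop.\ 9.1.11) and used as a black box, with no internal proof. What you have written is the standard self-contained argument: the Sierpi\'nski splitting lemma supplies the halving at every level of the dyadic tree, $f=\sum_n 2^{-n}\one_{A_n}$ is measurable as a pointwise limit of measurable partial sums, the preimage of each dyadic interval $[0,k2^{-n}]$ agrees with the union of the first $k$ level-$n$ cells up to the $\mu$-null set of eventually constant digit strings, and equality of the two measures on dyadic intervals determines them on all of $\fB([0,1])$. (Bogachev's own proof runs through an increasing family $A_t$ with $\mu(A_t)=t$ and $f(x)=\inf\{t: x\in A_t\}$, which is the same idea with the tree replaced by a continuum-indexed filtration; your version buys a completely explicit $f$ at the cost of the boundary bookkeeping you carry out.) Two points to tighten in a full write-up. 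First, the splitting lemma is the genuine content, as you say, and you only gesture at its exhaustion proof; since that is where atomlessness enters, a self-contained proof should include it, and you should be aware that the paper's own definition of ``atom'' (a Dirac measure $\delta_x$) is weaker than the standard one your lemma needs --- a measure with no point masses can still have an atom in the sense of a set $A$ with $\mu(A)>0$ all of whose measurable subsets have measure $0$ or $\mu(A)$ (e.g.\ the countable--cocountable $0$--$1$ measure), and for such a measure the conclusion fails, so the hypothesis must be read in Bogachev's sense. Second, in the last step ``both are non-decreasing'' is not by itself enough to pass from dyadic $t$ to all $t$; the clean statement is that $t\mapsto\mu(f^{-1}([0,t]))$ is non-decreasing while the target $t\mapsto t$ is continuous, so sandwiching $t$ between dyadic rationals $d'\le t\le d$ gives $d'\le\mu(f^{-1}([0,t]))\le d$ and hence equality in the limit. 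With these repairs the proof is complete.
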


The following is a central definition.

\begin{dfn}[{\cite[Def.\ 6.6.1]{bogachevMeasureTheory}}]\label{dfn:souslin}
A set in a Hausdorff space is called a \emph{Souslin set} if it is the image of a complete separable metric space under a continuous mapping. A \emph{Souslin space} is a Hausdorff space that is a Souslin set.
\end{dfn}

The empty set is a Souslin set. Souslin sets are fully characterized.

\begin{prop}[see e.g.\ {\cite[Prop.\ 6.6.3]{bogachevMeasureTheory}}]
Every non-empty Souslin set is the image of $[0,1]\setminus\qset$ under some continuous function and also the image of $(0,1)$ under some Borel mapping.
\end{prop}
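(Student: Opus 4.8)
The plan is to deduce the proposition from two classical facts about the Baire space $\nset^\nset$ (the countable product of copies of $\nset$ with the product topology): that $\nset^\nset$ is homeomorphic to $[0,1]\setminus\qset$, and that $\nset^\nset$ admits a continuous surjection onto every non-empty complete separable metric space. Granting these, the first assertion is immediate: if $S\subseteq\cX$ is a non-empty Souslin set, then by \Cref{dfn:souslin} there are a complete separable metric space $M$ and a continuous $h:M\to\cX$ with $h(M)=S$, and $S\neq\varnothing$ forces $M\neq\varnothing$; choosing a continuous surjection $\psi:\nset^\nset\twoheadrightarrow M$ and a homeomorphism $\varphi:[0,1]\setminus\qset\to\nset^\nset$, the composition $h\circ\psi\circ\varphi$ is a continuous map of $[0,1]\setminus\qset$ onto $S$.

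For the homeomorphism $[0,1]\setminus\qset\cong\nset^\nset$ I would invoke the continued fraction expansion: since $0,1\in\qset$ every point of $[0,1]\setminus\qset$ lies in $(0,1)$ and has a unique expansion $x=[0;a_1,a_2,a_3,\dots]$ with all $a_i\in\nset$, and $x\mapsto(a_i)_{i\geq1}$ is a bijection $[0,1]\setminus\qset\to\nset^\nset$ that is continuous (two nearby irrationals share a long initial block of partial quotients, hence are close in the product topology) with continuous inverse (for a fixed sequence the convergents converge, their denominators tending to $\infty$). For the continuous surjection onto a non-empty complete separable metric space $M$ I would build the standard Lusin scheme: using the given complete metric on $M$ and a countable dense subset, assign to each finite string $s\in\nset^{<\nset}$ a non-empty closed set $F_s\subseteq M$ with $F_\varnothing=M$, $\mathrm{diam}\,F_s\leq 2^{-|s|}$ for $|s|\geq1$, $F_{s\frown n}\subseteq F_s$, and $F_s=\bigcup_{n\in\nset}F_{s\frown n}$, all obtained from coverings of $F_s$ by small closed balls (repeating a set whenever an intersection would be empty, so that no $F_{s\frown n}$ is empty). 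Completeness then forces $\bigcap_{k\in\nset}F_{\alpha|k}$ to be a single point $\psi(\alpha)$ for each $\alpha\in\nset^\nset$, and $\psi$ is continuous because the diameters shrink and surjective because the scheme exhausts $M$ at every node.

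The second assertion follows from the first with only routine bookkeeping. The set of irrationals $[0,1]\setminus\qset=(0,1)\setminus\qset$ is a Borel (in fact co-countable, $G_\delta$) subset of $(0,1)$, and the continuous surjection $F:[0,1]\setminus\qset\to S$ obtained above is in particular Borel when $\cX$ carries its Borel $\sigma$-algebra. Choosing any point $s_0\in S$ and setting $\widetilde F(x):=F(x)$ for $x$ irrational and $\widetilde F(x):=s_0$ for $x$ rational gives a Borel map $\widetilde F:(0,1)\to\cX$ --- it is the restriction of a Borel map on one Borel piece and constant on the complementary Borel piece --- whose image is $F([0,1]\setminus\qset)\cup\{s_0\}=S$. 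I would also point out why ``Borel'' cannot be weakened to ``continuous'' here: $(0,1)$ is connected, hence so is every continuous image of it in a Hausdorff space, while a Souslin set need not be connected; this explains the asymmetry in the statement.

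The only substantive ingredients are the two facts about $\nset^\nset$ used in the first paragraph; the rest is assembly. Of these, the continuous-surjection statement is the one demanding care, essentially in setting up the scheme so that the sets $F_{s\frown n}$ are at once non-empty (so that $\psi$ is everywhere defined) and exhaust $F_s$ (so that $\psi$ is onto). As the paper only needs the statement of the proposition, I would in the end simply cite a standard reference for both facts (e.g.\ \cite{bogachevMeasureTheory}, or a monograph on descriptive set theory) and condense the scheme construction to a couple of lines.
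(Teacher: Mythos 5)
Your proof is correct. Note that the paper itself does not prove this proposition: it is quoted as background from \cite{bogachevMeasureTheory} (Prop.\ 6.6.3) without proof, so there is no internal argument to compare against, and your route --- identifying $[0,1]\setminus\qset$ with the Baire space $\nset^\nset$ via continued fractions, mapping $\nset^\nset$ onto the Polish space $M$ coming from \Cref{dfn:souslin} by a Lusin scheme of non-empty closed sets with shrinking diameters, composing, and then obtaining the $(0,1)$-statement by extending the resulting Borel map by a constant value $s_0\in S$ on the rationals --- is precisely the standard descriptive-set-theoretic argument used in such references. The one step that genuinely demands care, arranging the scheme so that each $F_{s\frown n}$ is non-empty while $\bigcup_n F_{s\frown n}=F_s$ (so that $\psi$ is both everywhere defined and surjective), is handled correctly, and your closing remark that ``Borel'' cannot be weakened to ``continuous'' for $(0,1)$ because continuous images of a connected space are connected is a sound observation explaining the asymmetry in the statement.
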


More concrete examples which are important to us are the following.

\begin{exm}\label{exm:unitInterval}
The unit interval $[0,1]\subset\rset$ is of course a complete separable metric space (with the usual distance metric $d(x,y) := |x-y|$). The question which sets are the continuous images of $[0,1]$ is partially answered by \emph{space filling curves}, see e.g.\ \cite[Ch.\ 5]{saganSpaceFillingCurves}. So the \emph{Peano curves} as continuous and surjective functions
\[f:[0,1]\to [a_1,b_1]\times\cdots\times [a_n,b_n]\]
with $n\in\nset$ and $-\infty < a_i < b_i < \infty$ for all $i=1,\dots,n$ show that all hyper-rectangles are Souslin spaces/sets. Especially $[0,1]$ is a Souslin set/space.

A full answer gives the following theorem.\medskip

\begin{minipage}{\linewidth-1cm}
\begin{thmHahnM}[see e.g.\ {\cite[Thm.\ 6.8]{saganSpaceFillingCurves}}]\label{thm:hahnM}
A set $K$ in a non-empty Hausdorff space is the continuous image of $[0,1]$ if and only if it is compact, connected, and locally connected.
\end{thmHahnM}
\end{minipage}\medskip

\noindent
So sets $K\subseteq\rset^n$ are continuous images of $[0,1]$ if and only if they are compact and path-connected. Hahn--Mazurkiewicz also implies that $\pset\rset^n$ is a Souslin space.\exmsym
\end{exm}

%More Souslin sets can be constructed or identified by the following lemma.

\begin{lem}[see e.g.\ {\cite[Lem.\ 6.6.5, Thm.\ 6.6.6 and 6.7.3]{bogachevMeasureTheory}}]\label{lem:souslin}\
\begin{enumerate}[(i)]
\item The image of a Souslin set under a continuous function to a Hausdorff space is a Souslin set.

\item Every open or closed set of a Souslin space is Souslin.

\item If $A_n$ are Souslin sets in $\cX_n$ for all $n\in\nset$ then $\prod_{n\in\nset} A_n$ is a Souslin set in $\prod_{n\in\nset} \cX_n$.

\item If $A_n\subseteq\cX$ are Souslin sets in a Hausdorff space $\cX$, then $\bigcap_{n\in\nset} A_n$ and $\bigcup_{n\in\nset} A_n$ are Souslin sets.

\item Every Borel subset of a Souslin space is a Souslin space.

\item Let $A\subseteq\cX$ and $B\subseteq\cY$ be Souslin sets of Souslin spaces and $f:\cX\to\cY$ be a Borel function. Then $f(A)$ and $f^{-1}(B)$ are Souslin sets.
\end{enumerate}
\end{lem}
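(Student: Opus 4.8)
The plan is to reduce every item to stability properties of complete separable metric (\emph{Polish}) spaces. The workhorse is: if a Souslin set is written as $S=g(P)$ with $P$ Polish and $g$ continuous, then for \emph{any} $D\subseteq S$ one has $D=g(g^{-1}(D))$ (since $D\subseteq S=g(P)$), so $D$ is a Souslin set as soon as $g^{-1}(D)\subseteq P$ is one. The Polish facts I would take as known --- none of them uses the moment-theoretic content of this paper --- are: a closed subspace of a Polish space is Polish; an open subspace of a Polish space is Polish (Alexandrov); a countable product of Polish spaces is Polish; and a disjoint topological sum $\bigsqcup_{n} P_n$ of countably many Polish spaces is Polish (after passing to bounded equivalent metrics). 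With these, (i) is immediate: if $A=g(P)$ and $h\colon\cX\to\cY$ is continuous with $\cY$ Hausdorff and $A\subseteq\cX$, then $h(A)=(h\circ g)(P)$. For (ii), write the ambient Souslin space as $g(P)$; a closed (resp.\ open) subset $C$ has $g^{-1}(C)$ closed (resp.\ open) in $P$, hence Polish, and $C=g(g^{-1}(C))$. For (iii), $\prod_n A_n=(\prod_n g_n)(\prod_n P_n)$ with $\prod_n P_n$ Polish and $\prod_n g_n$ continuous into the Hausdorff space $\prod_n\cX_n$.

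For (iv), the union is obtained by gluing the maps $g_n\colon P_n\to\cX$ (where $A_n=g_n(P_n)$) to a single continuous map on the Polish sum $\bigsqcup_n P_n$ with image $\bigcup_n A_n$. For the intersection I would identify $\bigcap_n A_n$, with its subspace topology, homeomorphically with $(\prod_n A_n)\cap\Delta$, where $\Delta=\{(x,x,\dots)\}\subseteq\cX^{\nset}$ is the diagonal, which is closed because $\cX$ is Hausdorff. Writing $\prod_n A_n=g(P)$ as in (iii), one has $(\prod_n A_n)\cap\Delta=g(g^{-1}(\Delta))$ with $g^{-1}(\Delta)$ closed in $P$, hence Polish, so $\bigcap_n A_n$ is a continuous image of a Polish space.

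The substantial item is (v). I would first reduce to a Polish ambient space: writing the Souslin space as $g(P)$ and given a Borel $B$, the set $g^{-1}(B)$ is Borel in $P$ (preimages of open sets are open and taking preimages commutes with countable Boolean operations), and $B=g(g^{-1}(B))$; so it suffices to prove that every Borel subset of a Polish space $P$ is a continuous image of a Polish space. For this I would use the classical change-of-topology method: the collection $\cT$ of Borel sets $B\subseteq P$ for which there exists a Polish topology on $P$, refining the original one and with the same Borel $\sigma$-algebra, in which $B$ is clopen, is a $\sigma$-algebra --- closure under complements is trivial, and closure under countable unions holds because the topology generated by countably many such refined Polish topologies is again Polish (via the diagonal embedding into the corresponding countable product, whose image is closed) --- and it contains every closed set by an explicit refinement; hence $\cT=\fB(P)$. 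A Borel set $B$ is then clopen --- hence a Polish subspace --- in a suitable refined Polish topology, and the identity map from this subspace onto $B\subseteq P$ with the original topology is continuous, exhibiting $B$ as a continuous image of a Polish space.

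Finally (vi) follows from (i)--(v) by graph/diagonal/projection arguments. Writing $A=g(P)$ with $P$ Polish and $g$ continuous, the map $f\circ g\colon P\to\cY$ is Borel, and its graph $G=\{(p,y)\in P\times\cY:y=(f\circ g)(p)\}$ is the preimage of the closed diagonal $\Delta_{\cY}\subseteq\cY\times\cY$ under the Borel map $(p,y)\mapsto((f\circ g)(p),y)$; hence $G$ is Borel in the Souslin space $P\times\cY$ (by (iii)), so Souslin by (v), so $f(A)=\mathrm{proj}_{\cY}(G)$ is Souslin by (i). Dually, writing $B=h(Q)$ with $Q$ Polish and $h$ continuous, $f^{-1}(B)=\mathrm{proj}_{\cX}(E)$ with $E=\{(x,q)\in\cX\times Q:f(x)=h(q)\}$ the preimage of $\Delta_{\cY}$ under the Borel map $(x,q)\mapsto(f(x),h(q))$, hence Borel in the Souslin space $\cX\times Q$ and Souslin by (v), so $f^{-1}(B)$ is Souslin by (i) (and one may intersect with $A$ by the finite-intersection case of (iv)). I expect the only real obstacle to be the change-of-topology step in (v) --- in particular the fact that a countable join of Polish topologies refining a common one is again Polish; everything else is bookkeeping. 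A secondary point of care, when $\cY$ is a non-metrizable Souslin space, is that the maps into $\cY\times\cY$ above are Borel only because $\fB(\cY\times\cY)=\fB(\cY)\otimes\fB(\cY)$ for Souslin spaces, which belongs to the same circle of results and can be quoted alongside the lemma; for metrizable (hence second countable) Souslin spaces it is automatic.
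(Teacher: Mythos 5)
This lemma is stated in the paper without proof: it is quoted directly from Bogachev (Lem.\ 6.6.5, Thm.\ 6.6.6, 6.7.3), so there is no in-paper argument to compare against, and your proposal has to be judged on its own terms. On those terms it is correct. Your uniform device (write the ambient Souslin set as $g(P)$ with $P$ Polish and use $D=g(g^{-1}(D))$ for $D$ open, closed, or Borel) together with the disjoint-sum construction for unions, the closed-diagonal trick for intersections, the change-of-topology argument for (v), and the graph/projection argument for (vi) is the standard Kechris-style route through Polish spaces. Bogachev's own development, which the paper cites, is organized somewhat differently: the closure properties in (iii)--(iv) are obtained via the Souslin $A$-operation and its stability properties, and (v)--(vi) lean on the Borel isomorphism of a Souslin space with a Souslin subset of $[0,1]$ (the paper's Theorem \ref{thm:isoOne}); what that route buys is the stronger statement that Souslin sets are stable under the $A$-operation, which your argument does not produce, while your route buys self-containedness inside elementary Polish-space topology. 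The two points you flag yourself are indeed the only delicate ones and both are standard: the countable join of Polish topologies refining a common one is again Polish with the same Borel $\sigma$-algebra (second countability of the join, with a base of finite intersections drawn from countable bases of the $\tau_n$, is what keeps the Borel structure fixed), and the identity $\fB(\cY\times\cY)=\fB(\cY)\otimes\fB(\cY)$ for Souslin $\cY$ (equivalently, a countable point-separating family of Borel sets) is needed in (vi) to make the maps into $\cY\times\cY$ Borel and to see that the diagonal is in the product $\sigma$-algebra; both facts sit in the same chapter of Bogachev and may legitimately be quoted alongside the lemma. One cosmetic remark: in your intersection argument you should add the half-sentence that $\bigcap_n A_n$ is the image of $g(g^{-1}(\Delta))$ under the (continuous) coordinate projection $\Delta\to\cX$, which is exactly how the continuous-image-of-Polish description transfers from the diagonal back to $\cX$.
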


\begin{rem}\label{rem:nonBorelSouslin}
The reverse of \Cref{lem:souslin}(v) is in general not true. Not every Souslin set is Borel. In fact, every non-empty complete metric space without isolated points contains a non-Borel Souslin set, see e.g.\ \cite[Cor.\ 6.7.11]{bogachevMeasureTheory}.
\end{rem}

(vi) demonstrates the difference between Souslin sets and Borel sets (in $\rset^n$). While the continuous image of a Borel set is again a Borel set, this no longer holds for Borel functions. But as (vi) shows for the Souslin sets the preimage and image under measurable functions remain Souslin sets.

From \Cref{exm:unitInterval} and \Cref{lem:souslin} we get the following additional explicit \mbox{examples} of Souslin sets.

\begin{exm}
$\rset^n$ and every compact semi-algebraic set in $\rset^n$ (resp.\ $\pset\rset^n$) are Souslin sets.\exmsym
\end{exm}

\begin{dfn}
Let $(\cX,\cA)$ and $(\cY,\cB)$ be two measurable spaces. A measurable function $i:(\cX,\cA)\to (\cY,\cB)$ is called an \emph{isomorphism} and the two measurable spaces \emph{isomorphic} if $i$ is bijective, $i(\cA)=\cB$, and $i^{-1}(\cB)=\cA$.
\end{dfn}

The reason why we work with Souslin spaces is revealed in the following theorem.

\begin{thm}[see e.g.\ {\cite[Thm.\ 6.7.4]{bogachevMeasureTheory}}]\label{thm:isoOne}
Let $\cX$ be a Souslin space. Then there exist a Souslin set $S\subseteq [0,1]$ and an isomorphism $h:(S,\fB(S))\to (\cX,\fB(\cX))$.
\end{thm}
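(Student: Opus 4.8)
The plan is to manufacture a single injective Borel map $\psi\colon\cX\to[0,1]$ whose image is a Cantor‐type set, and then to let \Cref{lem:souslin}(vi) do all the remaining work; throughout, ``isomorphism'' means a bijection that is measurable with measurable inverse, as in the definition preceding the theorem.

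The one structural input is a countable family $\cF=\{B_n\}_{n\in\nset}\subseteq\fB(\cX)$ with $\sigma(\cF)=\fB(\cX)$. Such an $\cF$ automatically separates the points of $\cX$: if no $B_n$ distinguishes $x$ from $y$, then no set of $\sigma(\cF)=\fB(\cX)$ does either, which contradicts the Hausdorff axiom (an open neighbourhood of $x$ missing $y$ is a Borel set). When $\cX$ is metrizable, $\cF$ is immediate: being a continuous image of a separable metric space, $\cX$ is separable, hence second countable, and one may take $\cF$ to be a countable base. (This already covers every space $\cX$ appearing later in the paper, e.g.\ $\rset^n$, $\pset\rset^n$, compact semi-algebraic sets, or Borel subsets of $\rset^n$.) For a general, possibly non-metrizable, Souslin space the existence of $\cF$ is the nontrivial fact that the Borel $\sigma$-algebra of a Souslin space is countably generated, which I would quote from \cite{bogachevMeasureTheory}; this is the only step of the argument that is not routine.

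Given $\cF$, define
\[
\psi\colon\cX\to[0,1],\qquad \psi(x):=\sum_{n\in\nset}\frac{2\one_{B_n}(x)}{3^{n}}.
\]
This is the composition of the map $x\mapsto(\one_{B_n}(x))_{n\in\nset}\in\{0,1\}^{\nset}$, which is Borel because each coordinate $\one_{B_n}$ is $\fB(\cX)$-measurable, with the canonical homeomorphism of $\{0,1\}^{\nset}$ onto the middle-thirds Cantor set $C\subset[0,1]$; hence $\psi$ is Borel. It is injective: $\psi(x)=\psi(y)$ forces all ternary digits (which lie in $\{0,2\}$, hence are unique) to coincide, so $\one_{B_n}(x)=\one_{B_n}(y)$ for every $n$, and then $x=y$ because $\cF$ separates points. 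Put $S:=\psi(\cX)\subseteq[0,1]$. Since $\cX$ and $[0,1]$ are Souslin spaces, $\psi$ is a Borel function, and $\cX$ is a Souslin set in the Souslin space $\cX$, \Cref{lem:souslin}(vi) gives that $S$ is a Souslin subset of $[0,1]$.

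It remains to check that $\psi$ is an isomorphism of $(\cX,\fB(\cX))$ onto $(S,\fB(S))$. As a map onto $S$ it is a measurable bijection, so only measurability of the inverse is at issue. Consider $\mathcal{G}:=\{B\subseteq\cX:\psi(B)\in\fB(S)\}$; because $\psi$ is injective it commutes with complements and countable unions, so $\mathcal{G}$ is a $\sigma$-algebra, and it contains every $B_n$ since $\psi(B_n)=S\cap\{z\in C:\text{the }n\text{-th ternary digit of }z\text{ equals }2\}$ is the trace on $S$ of a clopen subset of $C$, hence a Borel subset of $S$. Therefore $\mathcal{G}\supseteq\sigma(\cF)=\fB(\cX)$, i.e.\ $\psi$ maps Borel sets to Borel sets, and $h:=\psi^{-1}\colon(S,\fB(S))\to(\cX,\fB(\cX))$ is the required isomorphism. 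As indicated, the genuine obstacle is the countable generation of $\fB(\cX)$ for non-metrizable $\cX$; everything downstream of it is bookkeeping with \Cref{lem:souslin}.
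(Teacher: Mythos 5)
Your construction is the standard one (it is essentially the route the cited source itself takes), and the bookkeeping is sound: the Marczewski map $\psi(x)=\sum_n 2\one_{B_n}(x)3^{-n}$ is an injective Borel map into the Cantor set whenever $\cF=\{B_n\}$ separates points, $S=\psi(\cX)$ is Souslin by \Cref{lem:souslin}(vi), and your good-sets argument correctly shows $\psi(B)\in\fB(S)$ for every $B\in\sigma(\cF)$. Note also that the paper itself offers no proof of \Cref{thm:isoOne} -- it is imported from \cite{bogachevMeasureTheory} -- so there is no internal argument to compare with; the comparison has to be with the textbook proof.

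The genuine issue is the single ingredient you propose to quote. For a general (possibly non-metrizable) Souslin space, the statement that $\fB(\cX)$ is countably generated is not a lighter prerequisite: in the standard development, and in \cite{bogachevMeasureTheory} in particular, it is obtained as a consequence of this very isomorphism theorem, so invoking it here is close to assuming what is to be proved. What is available upstream is only a countable family of Borel sets \emph{separating} points, and with that your lemma only yields $\psi(\sigma(\cF))\subseteq\fB(S)$, which gives measurability of $\psi^{-1}$ into $(\cX,\sigma(\cF))$ but not into $(\cX,\fB(\cX))$. The missing step -- that a countably generated, point-separating sub-$\sigma$-algebra of $\fB(\cX)$ already equals $\fB(\cX)$ on a Souslin space, equivalently that an injective Borel map between Souslin spaces is a Borel isomorphism onto its image (Lusin--Souslin/Blackwell) -- is exactly where the depth of the theorem lies, and it is what Bogachev uses in place of your good-sets argument. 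So your proposal is complete and elementary for metrizable $\cX$ (second countability supplies a generating family, and this covers every space actually used in this paper, such as $\rset^n$, $\pset\rset^n$, compact semi-algebraic $K$, and Borel subsets thereof), but as a proof of the general statement it has a circularity gap; to close it you would replace the quoted countable-generation fact by the injective-Borel-isomorphism theorem for Souslin spaces.
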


The existence of an isomorphism can be weakened. For Borel measurable function $f:\cX\to\cY$ between two Souslin spaces $\cX$ and $\cY$ with $f(\cX)=\cY$ one always finds nice (i.e., Borel measurable) one-sided inverse functions.

\begin{thmJankoff}[see e.g.\ {\cite[Thm.\ 6.9.1 and 9.1.3]{bogachevMeasureTheory}}]\label{thm:jankoff}
Let $\cX$ and $\cY$ be two Souslin spaces and let $f:\cX\to\cY$ be a surjective Borel mapping. Then there exists a Borel measurable function $g:\cY\to\cX$ such that $f(g(y))=y$ for all $y\in\cY$.
\end{thmJankoff}

In other words, restricting $f$ so some $\cX_0\subseteq\cX$ makes $\tilde{f}:=f|_{\cX_0}$ not only bijective but $\tilde{f}$ and $\tilde{f}^{-1}$ are measurable. We have
\[\cY\overset{g}{\to} \cX \overset{f}{\to} \cY \qquad\text{with}\qquad f\circ g = \id_\cY,\]
i.e., $g$ is injective, $f$ is surjective, and with $\cX_0 = \mathrm{im}\, g := g(\cY)$ we have $\tilde{f}^{-1} = g$.

\begin{dfn}[see e.g.\ {\cite[Def.\ 9.2.1]{bogachevMeasureTheory}}]
Let $(\cX,\cA,\mu)$ and $(\cY,\cB,\nu)$ be two measure spaces with non-negative measures.
\begin{enumerate}[i)]
\item A \emph{point isomorphism} $T:\cX\to\cY$ is a bijective mapping such that $T(\cA) = \cB$ and $\mu\circ T^{-1} = \nu$.

\item The spaces $(\cX,\cA,\mu)$ and $(\cY,\cB,\nu)$ are called \emph{isomorphic $\modzero$} if there exist sets $N\in\cA_\mu$, $M\in\cB_\nu$ with $\mu(N)=\nu(M)=0$ and a point isomorphism $T:\cX\setminus N\to\cY\setminus M$ that are equipped with the restriction of the measures $\mu$ and $\nu$ and the $\sigma$-algebras $\cA_\mu$ and $\cB_\nu$.
\end{enumerate}
\end{dfn}

A point isomorphism $T$ between $(\cX,\cA,\mu)$ and $(\cY,\cB,\nu)$ is of course measurable since $\nu(B) = (\mu\circ T^{-1})(B) = \mu(T^{-1}(B))$ implies $T^{-1}(B)\in\cA$ for all $B\in\cB$.

Like \Cref{thm:isoOne} also the next result shows the importance of working on Souslin sets.

\begin{thm}[see e.g.\ {\cite[Thm.\ 9.2.2]{bogachevMeasureTheory}}]\label{thm:isoSouslin}
Let $(\cX,\cA)$ be a Souslin space with Borel probability measure $\mu$. Then $(\cX,\cA,\mu)$ is isomorphic $\modzero$ to the space $([0,1],\fB([0,1]),\nu)$ for some $\nu$ Borel probability measure. If $\mu$ is an atomless measure, then one can take for $\nu$ the Lebesgue measure $\lambda$.
\end{thm}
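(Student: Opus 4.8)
The plan is to transport the problem to the interval $[0,1]$ via \Cref{thm:isoOne}, to peel off the atoms of $\mu$, and then to show by an explicit dyadic coding that the atomless part is point isomorphic $\modzero$ to $([0,1],\lambda)$.

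First I would apply \Cref{thm:isoOne} to obtain a Souslin set $S\subseteq[0,1]$ and a Borel isomorphism $h\colon(S,\fB(S))\to(\cX,\fB(\cX))$; pushing $\mu$ forward through $h^{-1}$ gives a Borel probability measure $\mu_S$ on $S$, and any point isomorphism $\modzero$ between $(S,\fB(S),\mu_S)$ and $([0,1],\fB([0,1]),\nu)$ pulls back through the Borel bijection $h$ to the asserted statement for $(\cX,\cA,\mu)$ (atomlessness is preserved, $h$ being a Borel bijection). Since Souslin sets are universally measurable, inner regularity of Borel measures on Polish spaces furnishes a Borel set $X_0\subseteq S$ with $\mu_S(X_0)=1$, so after discarding the $\mu_S$-null remainder I may assume henceforth that $\cX$ is a \emph{Borel} subset of $[0,1]$ with $\cA=\fB(\cX)$.

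Next I would decompose $\mu=\mu_{\mathrm{at}}+\mu_{\mathrm c}$, where $\mu_{\mathrm{at}}=\sum_i p_i\,\delta_{x_i}$ is carried by an at most countable set of atoms and $\mu_{\mathrm c}$ is atomless, and set $\alpha:=\mu_{\mathrm c}(\cX)=1-\sum_i p_i$. The atomic part is modelled trivially --- send the $x_i$ bijectively to a sequence $y_i\to1$ in $(\alpha,1)$ and charge each $y_i$ with mass $p_i$ --- so, assuming $\alpha>0$ (if $\alpha=0$ this already finishes the proof), the substance is the atomless core, for which the key lemma to establish is: \emph{an atomless Borel probability measure on a Borel subset of $[0,1]$ is point isomorphic $\modzero$ to $([0,1],\lambda)$}. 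To prove this I would build a refining sequence of finite Borel partitions of $\cX$ in which every level-$n$ cell has measure $2^{-n}$ (each halving of a Borel cell $A$ with $\mu_{\mathrm c}(A)=c$ into two Borel pieces of measure $c/2$ being possible because an atomless measure restricted to $A$ attains every intermediate value), while simultaneously arranging the refinements to separate the members of a fixed countable generating family of $\fB(\cX)$. Reading off, for $x\in\cX$, the dyadic address of the branch through $x$ then defines a Borel map $g\colon\cX\to[0,1]$ pushing $\mu_{\mathrm c}/\alpha$ forward to $\lambda$ and, by the separation requirement, injective on a Borel set $X_1$ of full measure; by the Lusin--Souslin theorem $g(X_1)$ is Borel and $g|_{X_1}$ a Borel isomorphism onto it, and since $\lambda(g(X_1))=1$, deleting the complementary null sets makes $g|_{X_1}$ a point isomorphism $\modzero$ onto $([0,1],\lambda)$. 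Rescaling transfers the atomless part to $([0,\alpha],\lambda|_{[0,\alpha]})$; gluing with the atom bijection then yields a point isomorphism $\modzero$ from $(\cX,\cA,\mu)$ onto $([0,1],\fB([0,1]),\nu)$ with $\nu=\lambda|_{[0,\alpha]}+\sum_i p_i\,\delta_{y_i}$, and when $\mu$ is atomless one has $\alpha=1$ and $\nu=\lambda$.

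The two reductions are routine; the hard part will be the atomless core, and within it the production of a measure-preserving map that is \emph{moreover injective off a null set}. A bare appeal to \Cref{prop:makingLebesgue} only supplies a measure-preserving $f$ with no control of injectivity --- and a given such $f$ cannot in general be repaired --- so the coding map must be built with injectivity designed in from the start, which is exactly where the descriptive-set-theoretic hypotheses become indispensable: $\fB(\cX)$ must be countably generated and point-separating, and one needs the Lusin--Souslin theorem to know that the resulting injective Borel image is again Borel. I expect this injectivity bookkeeping to be the main obstacle; everything else is standard measure decomposition together with the intermediate-value property of atomless measures.
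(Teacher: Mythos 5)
The paper does not actually prove \Cref{thm:isoSouslin}: it is quoted, without proof, from \cite[Thm.\ 9.2.2]{bogachevMeasureTheory}, so your attempt can only be measured against the standard argument there. Your architecture agrees with it: pass to a Borel subset of $[0,1]$ via \Cref{thm:isoOne} together with universal measurability/tightness (a $\sigma$-compact $X_0\subseteq S$ of full measure), split off the at most countably many atoms and park them on a null sequence of points carrying the masses $p_i$, and reduce everything to the key lemma that an atomless Borel probability measure on a Borel set $X\subseteq[0,1]$ is point isomorphic $\modzero$ to $([0,1],\lambda)$. Both reductions, and the final gluing/rescaling to $\nu=\lambda|_{[0,\alpha]}+\sum_i p_i\delta_{y_i}$ (with $\nu=\lambda$ when $\mu$ is atomless), are correct as you describe them.

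The gap sits exactly where you predict it, in the key lemma. You ask for refining finite Borel partitions in which every level-$n$ cell has measure exactly $2^{-n}$ \emph{and} which ``separate the members of a fixed countable generating family'' of $\fB(X)$. As stated these two demands are not compatible in any finite-level sense: a union of level-$n$ cells always has dyadic-rational measure, so a generator of irrational measure is never refined at any finite stage; and capturing the generators only in the limit $\sigma$-algebra of all cells is essentially equivalent to the almost-everywhere injectivity you are trying to establish, so the decisive point is asserted rather than constructed. (An unconstrained halving scheme — which is all that \Cref{prop:makingLebesgue} gives — indeed need not separate points off a null set, as you note.) The classical repair is to exploit the linear order of $[0,1]$: split each cell $A$ at a quantile, i.e.\ at a point $t$ with $\mu(A\cap[0,t])=\tfrac12\mu(A)$, which exists because $t\mapsto\mu(A\cap[0,t])$ is continuous for atomless $\mu$; equivalently, use the distribution function $F(x)=\mu(X\cap[0,x])$ directly. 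Then $F$ pushes $\mu$ to $\lambda$, and it fails to be injective only on the countably many maximal intervals of constancy, each of which is $\mu$-null, so after deleting their union $F$ is injective on a Borel set of full measure; Lusin--Souslin (as you invoke) then makes it a Borel isomorphism onto a Borel set of full Lebesgue measure, and the rest of your proof goes through unchanged. So the proposal is the right route but needs this replacement (or an actual construction achieving your separation bookkeeping) to be complete.
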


\begin{cor}[see e.g.\ {\cite[Rem.\ 9.7.4]{bogachevMeasureTheory}}]\label{prop:fromLebesgue}
Let $\mu$ be a probability measure on a Souslin space $\cX$. Then there exists a measurable function $f:[0,1]\to\cX$ such that $\mu=\lambda\circ f^{-1}$ where $\lambda$ is the Lebesgue measure on $[0,1]$.
\end{cor}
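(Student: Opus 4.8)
The plan is to combine the isomorphism-$\modzero$ theorem \Cref{thm:isoSouslin} with the classical quantile (inverse-distribution-function) map on the unit interval. Since $\mu(\cX)=1$ we have $\cX\neq\emptyset$, so I would first fix a point $x_0\in\cX$. Applying \Cref{thm:isoSouslin} to $(\cX,\fB(\cX),\mu)$ yields a Borel probability measure $\nu$ on $[0,1]$, exceptional sets $N\subseteq\cX$ and $M\subseteq[0,1]$ that are $\mu$- resp.\ $\nu$-null, and a point isomorphism between $\cX\setminus N$ and $[0,1]\setminus M$ for the restricted measures. Since a point isomorphism and its inverse are measurable, this gives a measurable map $\phi:[0,1]\setminus M\to\cX\setminus N$ with $\nu\circ\phi^{-1}=\mu$ on $\cX\setminus N$; extending it to $f_1:[0,1]\to\cX$ by setting $f_1\equiv x_0$ on $M$ produces a measurable map on the whole interval, and since $\phi^{-1}(B)$ is disjoint from $M$ while $\nu(M)=\mu(N)=0$, the identity $\nu\circ f_1^{-1}=\mu$ follows by a short computation.

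It then remains to push $\lambda$ forward onto $\nu$, for which I would use the quantile map. Put $F(x):=\nu([0,x])$ (nondecreasing, right-continuous, $F(1)=1$) and $Q(t):=\inf\{x\in[0,1]:F(x)\geq t\}$; then $Q:[0,1]\to[0,1]$ is nondecreasing, hence Borel, and the elementary equivalence $Q(t)\leq x\Leftrightarrow t\leq F(x)$ (where right-continuity of $F$ enters) gives $Q^{-1}([0,x])=[0,F(x)]$ and thus $\lambda(Q^{-1}([0,x]))=F(x)=\nu([0,x])$. Since the intervals $[0,x]$ form a $\pi$-system generating $\fB([0,1])$, this upgrades to $\lambda\circ Q^{-1}=\nu$. (If $\mu$ is atomless one may take $\nu=\lambda$ outright by \Cref{thm:isoSouslin} and omit this step.) Finally I set $f:=f_1\circ Q:[0,1]\to\cX$, which is measurable as a composition, and for every Borel $B\subseteq\cX$
\begin{align*}
(\lambda\circ f^{-1})(B)
&=\lambda\big(Q^{-1}(f_1^{-1}(B))\big)=(\lambda\circ Q^{-1})(f_1^{-1}(B))\\
&=\nu(f_1^{-1}(B))=(\nu\circ f_1^{-1})(B)=\mu(B),
\end{align*}
so that $\lambda\circ f^{-1}=\mu$, as claimed.

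None of the three steps is difficult individually; the real point is conceptual, namely to recognize that \Cref{prop:makingLebesgue} runs the \emph{wrong way} — it turns an atomless measure \emph{into} $\lambda$ — so the passage from $\lambda$ to the possibly atomic intermediate measure $\nu$ must be effected by the non-injective quantile map $Q$ rather than by that proposition. The only genuinely fussy ingredient is the measure-theoretic bookkeeping: the exceptional sets supplied by \Cref{thm:isoSouslin} live a priori in the Lebesgue completions, so a small standard argument (enlarging them to Borel null sets, or modifying $f$ on a $\lambda$-null set using the Borel isomorphism of \Cref{thm:isoOne}) is needed if one insists that $f$ be Borel rather than merely Lebesgue measurable.
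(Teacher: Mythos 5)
Your argument is correct and is essentially the paper's own route: the paper gives no independent proof but cites Bogachev (Rem.\ 9.7.4) and remarks that \Cref{prop:fromLebesgue} follows from \Cref{thm:isoSouslin} by ``introducing atoms into $f$ via constant functions,'' which is exactly what your quantile map $Q$ does explicitly (it is constant on the intervals corresponding to the atoms of the intermediate measure $\nu$). Your factorization $f=f_1\circ Q$ with $\lambda\circ Q^{-1}=\nu$ and $\nu\circ f_1^{-1}=\mu$ is a clean way to carry out that sketch, and you correctly flag the only delicate point, namely that the $\modzero$ isomorphism is measurable only with respect to the completions, so a standard null-set modification is needed if one wants $f$ Borel rather than merely Lebesgue measurable.
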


For both results note the difference to \Cref{prop:makingLebesgue}. In \Cref{prop:makingLebesgue} we find for any measurable space $\cX$ and measure $\mu$ a map
\[f:\cX\to [0,1]\qquad \text{such that}\qquad \mu = \lambda\circ f^{-1}.\]
But for Souslin spaces $\cX$ in \Cref{prop:fromLebesgue} we find a map
\[f:[0,1]\to \cX\qquad \text{such that}\qquad \lambda = \mu\circ f^{-1}.\]
\Cref{thm:isoSouslin} restricts $f:[0,1]\to\cX$ to isomorphisms and hence not all measures can be transformed into $\lambda$. Atoms in the measure $\mu$ prevent it from being isomorphic to $\lambda$. In fact, as explained in \cite[Rem.\ 9.7.4]{bogachevMeasureTheory}, \Cref{prop:fromLebesgue} follows from \Cref{thm:isoSouslin} by introducing atoms into $f:[0,1]\to\cX$ by introducing constant functions into $f$.

But \Cref{thm:isoSouslin} provides that if $\mu$ has atoms, it can still be isomorphic $\modzero$ be transformed into a measure $\nu$ on $[0,1]$. Without atoms we could chose $\nu = \lambda$. So is it possible to transform the non-atomic part of $\mu$ to $\lambda$ and then add the atoms from $\mu$ to $\lambda$? Yes, we can. This is done on the following spaces.

\begin{dfn}[see e.g.\ {\cite[Def.\ 9.4.6]{bogachevMeasureTheory}}]
A measure space $(\cX,\cA,\mu)$ is called a \emph{Lebesgue--Rohlin space} if it is isomorphic $\modzero$ to some measure space $(\cY,\cB,\nu)$ with a countable basis with respect to which $\cY$ is complete.
\end{dfn}

\begin{exm}[see e.g.\ {\cite[Exm.\ 9.4.2]{bogachevMeasureTheory}}]\label{exm:lebRoh}
$(M,\fB(M),\mu)$, where $M$ is a Borel set of a complete separable metric space $\cX$ and $\mu$ is a Borel measure on $M$, is a Lebesgue--Rohlin space. Especially $\cX = \rset^n$ or $\pset\rset^n$ are complete metric spaces and therefore any Borel measure on a Borel subset $M\in\fB(\rset^n)$ gives a Lebesgue--Rohlin space.\exmsym
\end{exm}

We can now transform any measure by an isomorphism $\modzero$ to the Lebesgue measure $\lambda$ plus atoms.

\begin{thm}[see e.g.\ {\cite[Thm.\ 9.4.7]{bogachevMeasureTheory}}]\label{thm:lebRoh}
Let $(\cX,\cA,\mu)$ be a Lebesgue--Rohlin space with a probability measure $\mu$. Then it is isomorphic $\modzero$ to the interval $[0,1]$ with the measure $\nu = c\lambda + \sum_{i=1}^\infty c_n\cdot \delta_{1/n}$, where $c=1-\sum_{i=1}^\infty c_i$, $\mu(a_i) = c_i$ and $\{a_i\}\subseteq\cX$ is the family of all atoms of $\mu$.
\end{thm}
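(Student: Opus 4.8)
The plan is to split $\mu$ into its atomic and its atomless part, to dispose of the atomless part with the Souslin result \Cref{thm:isoSouslin}, and then to glue the two resulting isomorphisms together along $[0,1]$ while keeping track of the exceptional null sets. First I would isolate the atoms: since $\mu$ is finite, the set of points of positive $\mu$-mass is at most countable, so I enumerate it (finitely or infinitely) as $\{a_i\}\subseteq\cX$, put $c_i:=\mu(\{a_i\})$ and $c:=1-\sum_i c_i$, and set $A:=\{a_i\}$, $B:=\cX\setminus A$, $\mu_0:=\mu|_B$, so that $\mu=\mu_0+\sum_i c_i\delta_{a_i}$, where $\mu_0(B)=c$ and $\mu_0$ has no point masses. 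Since the whole assertion is invariant under an isomorphism $\modzero$, the very definition of a Lebesgue--Rohlin space lets me assume that $\cX$ is a complete separable metric space with $\mu$ a Borel probability measure; then $\cX$ is a Souslin space, $B$ is a Borel subset of $\cX$ and hence Souslin by \Cref{lem:souslin}, and $(B,\fB(B),\mu_0)$ is itself a Lebesgue--Rohlin space by \Cref{exm:lebRoh}.

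Next I would treat the two pieces. If $c>0$, then $\mu_0/c$ is an atomless Borel probability measure on the Souslin space $B$, so \Cref{thm:isoSouslin} yields a point isomorphism $\modzero$
\[
T_0:\; B\setminus N_0\;\longrightarrow\;[0,1]\setminus M_0,\qquad \mu_0(N_0)=0=\lambda(M_0),
\]
which pushes $\mu_0$ forward to $c\lambda$ (rescale the $([0,1],\lambda)$ target by the factor $c$). As $\{1/i\}_i$ is countable, hence $\lambda$-null, I may enlarge $M_0$ so that $\{1/i\}_i\subseteq M_0$ (at the cost of adding a further $\mu_0$-null set to $N_0$), which makes the image of $T_0$ disjoint from $\{1/i\}_i$. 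On the atomic side, $a_i\mapsto 1/i$ is trivially a point isomorphism from $\bigl(A,\sum_i c_i\delta_{a_i}\bigr)$ onto $\bigl(\{1/i\}_i,\sum_i c_i\delta_{1/i}\bigr)$, all sets involved being Borel because they are countable. I then define $T:\cX\setminus N_0\to[0,1]$ by $T:=T_0$ on $B\setminus N_0$ and $T(a_i):=1/i$. By construction $T$ is a bijection onto $([0,1]\setminus M_0)\cup\{1/i\}_i$, it is bimeasurable, and it pushes $\mu$ forward to $\nu=c\lambda+\sum_i c_i\delta_{1/i}$ (one checks this on Borel subsets of $[0,1]$, using $c\lambda(M_0)=0$ for the continuous part). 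Its complement $M_0\setminus\{1/i\}_i$ is $\nu$-null, because $c\lambda(M_0)=0$ and every atom of $\nu$ lies in $\{1/i\}_i$; hence $T$ is the desired isomorphism $\modzero$. If $c=0$, then $\mu$ is purely atomic, the $B$-part of the construction is empty, and $a_i\mapsto 1/i$ already gives an isomorphism $\modzero$ onto $\bigl([0,1],\sum_i c_i\delta_{1/i}\bigr)$; and if $\mu$ has no atoms at all this is exactly \Cref{thm:isoSouslin} with $\nu=\lambda$.

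The gluing and the null-set bookkeeping above are routine. The one point that genuinely needs care --- and which I expect to be the main obstacle --- is the appeal to \Cref{thm:isoSouslin}: one must know that $\mu_0$, having no point masses, is in fact \emph{atomless} in the general measure-theoretic sense required by that theorem, and that the passage to the Borel subset $B$ together with the rescaling keeps us inside the Souslin and Lebesgue--Rohlin world where the theorem is available. Both are standard facts, but they are exactly what makes the decomposition $\mu=\mu_0+\sum_i c_i\delta_{a_i}$ and its reduction to the atomless case legitimate; everything past that point is assembly.
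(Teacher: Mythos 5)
First, note that the paper itself gives no proof of \Cref{thm:lebRoh}: it is quoted from Bogachev (Thm.\ 9.4.7) as one of the imported preliminaries, so your sketch can only be measured against the definitions the paper records and Bogachev's argument. Your second half --- splitting off the countably many point masses $a_i$ with weights $c_i$, applying \Cref{thm:isoSouslin} to the rescaled atomless remainder $\mu_0/c$, pushing its image off the $\lambda$-null set $\{1/i\}_i$, sending $a_i\mapsto 1/i$, and gluing with the routine null-set bookkeeping --- is correct and is exactly how the final step of the standard proof goes; the caveat you flag (no point masses implies atomless on a Souslin space) is indeed harmless there.

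The genuine gap is the opening reduction: ``the very definition of a Lebesgue--Rohlin space lets me assume that $\cX$ is a complete separable metric space with $\mu$ a Borel probability measure.'' That is not the definition the paper uses. The definition (Bogachev, Def.\ 9.4.6) only says the space is isomorphic $\modzero$ to a measure space possessing a \emph{countable basis with respect to which it is complete} --- Rohlin's abstract, purely measure-theoretic completeness, not metric completeness, and with no topology, no separable metric, and no Borel structure supplied. Producing a Polish (or Souslin) model with a Borel measure from that abstract datum is precisely the substantive content of Bogachev's Theorem 9.4.7: one encodes the basis $\{B_n\}$ by a map such as $x\mapsto\sum_n 2\cdot 3^{-n}\mathds{1}_{B_n}(x)$ into a Cantor set in $[0,1]$ and uses completeness with respect to the basis to show this is a point isomorphism $\modzero$ onto a conull measurable image; only then does the atoms-plus-\Cref{thm:isoSouslin} assembly you describe apply. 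Note also that \Cref{exm:lebRoh} goes in the wrong direction for you: it says Borel subsets of Polish spaces with Borel measures \emph{are} Lebesgue--Rohlin spaces, not that every Lebesgue--Rohlin space is of that form, which is what you would need. For the way the theorem is actually used in the paper (\Cref{thm:lebPlusPoints}, where the space is a Borel set $B\in\fB(\rset^n)$ with a Borel measure), your argument works verbatim because the Polish model is given for free; as a proof of the stated theorem for general Lebesgue--Rohlin spaces, the key step has been assumed rather than proved.
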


So we can transform any measure to the Lebesgue measure $\lambda$ on $[0,1]$ or to $\lambda$ on $[0,1]$ plus atoms. But these transformations are performed mainly by measurable functions because the set $\cX$ where the original measure lives it to large. If we restrict the space where the measure lives, we get better transformations, especially continuous ones.

\begin{thm}[see e.g.\ {\cite[Thm.\ 9.7.1]{bogachevMeasureTheory}}]\label{thm:lebIso}
Let $K$ be a compact metric space that is the image of $[0,1]$ under a continuous mapping $\tilde{f}$ and let $\mu$ be a Borel probability measure on $K$ such that $\supp\mu=K$. Then there exists a continuous and surjective mapping $f:[0,1]\to K$ such that $\mu = \lambda\circ f^{-1}$, $\lambda$ is the Lebesgue measure on $[0,1]$.
\end{thm}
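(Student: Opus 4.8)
The plan is to reduce the statement to the one-dimensional situation $K=[0,1]$ and then to build $f$ by composition.

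First I would settle the case $K=[0,1]$ by a quantile construction. Let $F(x):=\mu([0,x])$ be the distribution function of $\mu$; it is non-decreasing and right-continuous, and the hypothesis $\supp\mu=[0,1]$ forces $F$ to be \emph{strictly} increasing. Put $h(t):=\inf\{x\in[0,1]:F(x)\ge t\}$ for $t\in(0,1]$ and $h(0):=0$. A short verification then shows that $h$ is non-decreasing; that $h$ has no jumps precisely because $F$ is strictly increasing (a jump of $h$ would force $F$ to be constant on a non-degenerate interval), so $h$ is continuous; that $h(F(x))=x$ for all $x$, so $h$ is surjective; and that $\{t:h(t)\le y\}=[0,F(y)]$, whence $\lambda(h^{-1}([0,y]))=F(y)=\mu([0,y])$ for all $y$, i.e.\ $\mu=\lambda\circ h^{-1}$ (the sets $[0,y]$ generate $\fB([0,1])$). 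Thus $f:=h$ works when $K=[0,1]$.

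For a general Peano continuum $K$, the plan has two further steps. First I would replace the given $\tilde f$ by a continuous surjection $g:[0,1]\twoheadrightarrow K$ with the extra feature that $g\big((a,b)\big)$ has non-empty interior in $K$ for every subinterval $(a,b)\subseteq[0,1]$; since $\supp\mu=K$, this yields $\mu\big(g((a,b))\big)>0$ for every such $(a,b)$. The construction behind the Hahn--Mazurkiewicz Theorem (\Cref{thm:hahnM}) provides such a $g$: by local connectedness $K$ is written as a nested sequence of covers by Peano subcontinua of vanishing diameter which may be chosen to be closures of connected open sets, and $g$ maps the associated nested dyadic subintervals of $[0,1]$ onto these cells, so any $(a,b)$ eventually contains a subinterval mapped onto one of the open cells. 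Second, I would lift $\mu$ to a Borel probability measure $\nu$ on $[0,1]$ with $\supp\nu=[0,1]$ and $\nu\circ g^{-1}=\mu$. To this end fix a countable basis $\{(a_k,b_k)\}_{k\in\nset}$ of $[0,1]$ and, by Jankoff's Theorem (\Cref{thm:jankoff}), a Borel section $\sigma:K\to[0,1]$ of $g$; for each $k$ the set $W_k:=g((a_k,b_k))$ is Souslin (\Cref{lem:souslin}) and $g|_{(a_k,b_k)}\colon(a_k,b_k)\twoheadrightarrow W_k$ is a continuous surjection between Souslin spaces, so Jankoff again gives a Borel section $s_k:W_k\to(a_k,b_k)$ of $g|_{(a_k,b_k)}$. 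Then $\nu_k:=(\mu|_{W_k})\circ s_k^{-1}+(\mu|_{K\setminus W_k})\circ\sigma^{-1}$ is a probability measure on $[0,1]$ with $\nu_k\circ g^{-1}=\mu$ (because $g\circ s_k=\id_{W_k}$ and $g\circ\sigma=\id_K$) and $\nu_k((a_k,b_k))\ge\mu(W_k)>0$; hence $\nu:=\sum_{k\ge1}2^{-k}\nu_k$ satisfies $\nu\circ g^{-1}=\mu$ and charges every basic interval, so $\supp\nu=[0,1]$. Applying the case $K=[0,1]$ to $\nu$ now yields a continuous surjection $h:[0,1]\twoheadrightarrow[0,1]$ with $\lambda\circ h^{-1}=\nu$, and $f:=g\circ h$ is continuous, surjective, and has $\lambda\circ f^{-1}=(\lambda\circ h^{-1})\circ g^{-1}=\nu\circ g^{-1}=\mu$, which is the assertion.

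The step I expect to be the real obstacle is the choice of the parametrisation $g$ in the general case: one must know that a Peano continuum admits a continuous parametrisation by $[0,1]$ none of whose ``fibres over a subinterval'' is $\mu$-negligible, and this is exactly where local connectedness of $K$ and the hypothesis $\supp\mu=K$ must be used together. A careless choice can fail --- for instance if the given $\tilde f$ happens to be constant on some interval on which $\mu$ vanishes --- so genuine work behind \Cref{thm:hahnM} is needed here. Everything else (the quantile argument, the two applications of Jankoff's theorem, and the bookkeeping with Souslin sets) is routine.
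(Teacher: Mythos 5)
You should first note that the paper itself does not prove \Cref{thm:lebIso}: it is quoted from Bogachev (Thm.\ 9.7.1), so your argument can only be judged on its own merits, not matched against an in-paper proof. The two outer layers of your plan are correct. The quantile step for $K=[0,1]$ works: $\supp\mu=[0,1]$ makes $F$ strictly increasing, hence $h$ continuous, $h(F(x))=x$ gives surjectivity, and $\{t:h(t)\le y\}=[0,F(y)]$ together with a $\pi$-system argument gives $\lambda\circ h^{-1}=\mu$ (atoms of $\mu$ are harmless). The lifting step is also sound: if $g((a,b))$ has nonempty interior for every subinterval, then your Jankoff-section measures $\nu_k$ push forward to $\mu$, charge $(a_k,b_k)$, and $\nu=\sum_k 2^{-k}\nu_k$ has full support, so $f=g\circ h$ does the job. (Minor remark: $W_k=g((a_k,b_k))$ is $\sigma$-compact, hence Borel, so no Souslin/universal-measurability care is actually needed there.)

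The genuine gap is exactly the step you flag yourself: the existence of a continuous surjection $g:[0,1]\to K$ whose restriction to every subinterval has somewhere dense image. Your justification --- that the Hahn--Mazurkiewicz construction can be run with cells that are closures of connected open sets --- does not stand as stated: the closure of a connected open subset of a Peano continuum need not be locally connected (take the open region above the topologist's sine curve inside the square; its closure fails local connectedness along the limit segment), so such closures cannot in general serve as cells, and the subdivision lemma behind \Cref{thm:hahnM} (a finite cover by Peano subcontinua of diameter $<\varepsilon$) gives no control on interiors. Nor can you escape by passing to finer generations: a cell with empty interior in $K$ has only subcells with empty interior, so one bad cell creates parameter intervals all of whose subintervals have nowhere dense image, and your measure $\nu$ can then fail to exist (your own example $g(t)=\max(0,2t-1)$ on $K=[0,1]$ shows the lift can be impossible for a bad $g$). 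The required strengthened parametrisation is true but needs genuine extra input --- e.g.\ via Bing--Moise convexification of the metric, in which small closed balls are Peano subcontinua with nonempty interior that can be chained and nested, or an equivalent partitioning theorem for Peano continua --- and since the entire difficulty of \Cref{thm:lebIso} is concentrated in precisely this step, the proposal as written is incomplete at its pivotal point.
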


We will apply \Cref{thm:lebIso} especially in connection with the \Cref{thm:hahnM}. The advantage is here that $f$ on $[0,1]$ is continuous and can therefore be approximated by polynomials up to any precision $\varepsilon>0$ in the $\sup$-norm.

%\begin{thm}[see e.g.\ {\cite[Thm.\ 9.7.1]{bogachevMeasureTheory}}]

%\end{thm}

%\begin{prop}

%\end{prop}

%\begin{thm}[see e.g.\ {\cite[Cor.\ 9.12.27]{bogachevMeasureTheory}}]

%\end{thm}

%\begin{thm}[see e.g.\ {\cite[Cor.\ 9.12.35]{bogachevMeasureTheory}}]

%\end{thm}

%\begin{thm}[see e.g.\ {\cite[Thm.\ 9.12.38]{bogachevMeasureTheory}}]

%\end{thm}

\section{Transformations of linear Functionals: Basic Properties}%%%
%%%%%%%%%%%%%%%%%%%%%%%%%%%%%%%%%%%%%%%%%%%%%%%%%%%%%%%%%%%%%
\label{sec:trans}

For the transformation $\leadsto$ in \Cref{dfn:transform} we get the following technical result.

\begin{lem}\label{lem:transShort}
Let $\cX$, $\cY$, and $\cZ$ be Souslin spaces; $\cU$, $\cV$, and $\cW$ be vector spaces of real measurable functions on $\cX$, $\cY$, and $\cZ$ respectively; and $M:\cW\to\rset$, $L:\cV\to\rset$, and $K:\cU\to\rset$ be linear functionals. The following hold:
\begin{enumerate}[(i)]
\item $M\trans L$ and $L\trans K$ imply $M\trans K$.

\item $M\ctrans L$ and $L\ctrans K$ imply $M\ctrans K$.

\item $M\strans L$ and $L\strans K$ imply $M\strans K$.

\item $M\sctrans L$ and $L\sctrans K$ imply $M\sctrans K$.
\end{enumerate}
\end{lem}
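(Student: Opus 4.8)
The plan is to prove all four statements at once by the same device, namely composing the two transforming functions. I would begin by unpacking \Cref{dfn:transform}. From $M\trans L$ we obtain a Borel map $g\colon\cY\to\cZ$ with $\cW\circ g\subseteq\cV$ and $M(w)=L(w\circ g)$ for all $w\in\cW$; from $L\trans K$ we obtain a Borel map $f\colon\cX\to\cY$ with $\cV\circ f\subseteq\cU$ and $L(v)=K(v\circ f)$ for all $v\in\cV$. Set $h:=g\circ f\colon\cX\to\cZ$; this is the candidate transforming function for $M\trans K$.

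Three routine verifications then finish part (i). First, $h$ is Borel: for Borel $B\subseteq\cZ$ we have $(g\circ f)^{-1}(B)=f^{-1}(g^{-1}(B))$, and $g^{-1}(B)$ is Borel in $\cY$ since $g$ is Borel, hence $f^{-1}(g^{-1}(B))$ is Borel in $\cX$ since $f$ is Borel. Second, $\cW\circ h\subseteq\cU$: for $w\in\cW$ we have $w\circ h=(w\circ g)\circ f$, where $w\circ g\in\cV$ because $\cW\circ g\subseteq\cV$, and therefore $(w\circ g)\circ f\in\cU$ because $\cV\circ f\subseteq\cU$. Third, for every $w\in\cW$,
\[
M(w)=L(w\circ g)=K\bigl((w\circ g)\circ f\bigr)=K(w\circ h),
\]
using associativity of composition. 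This is precisely $M\trans K$.

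For (ii) one replaces ``Borel'' throughout by ``continuous'' and uses that a composition of continuous maps between (Souslin, hence Hausdorff) spaces is continuous; nothing else in the argument changes. For (iii) one additionally has that $f$ and $g$ are surjective and that the inclusions are equalities $\cW\circ g=\cV$ and $\cV\circ f=\cU$; then $h=g\circ f$ is surjective as a composition of surjections, and $\cW\circ h=(\cW\circ g)\circ f=\cV\circ f=\cU$, so $M\strans K$. Part (iv) is simply the conjunction of the arguments for (ii) and (iii): the composite $h$ is continuous and surjective with $\cW\circ h=\cU$, giving $M\sctrans K$.

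I do not expect a genuine obstacle here. The only points that require any care are keeping the direction of the arrows straight --- the transforming function for $L\trans K$ runs from the domain space of $K$ to that of $L$, so the composite witnessing $M\trans K$ must be $g\circ f$ and not $f\circ g$ --- and observing that each of the three properties in play (Borel measurability, continuity, surjectivity) is preserved under composition, together with the matching behaviour of the pullback condition on the function spaces ($\cW\circ(g\circ f)=(\cW\circ g)\circ f$).
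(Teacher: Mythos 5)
Your proof is correct and follows essentially the same route as the paper: compose the two transforming maps (the paper writes $h=f\circ g$ with the roles of the letters swapped, but it is the identical construction) and check that Borel measurability, continuity, surjectivity, and the pullback condition on the function spaces all pass to the composite. Your verification of (ii)--(iv) just spells out details the paper dismisses with ``follow in the same way as (i).''
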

\begin{proof}
(i): Since $M\leadsto L$ there exists a Borel function $f:\cY\to\cZ$ such that $\cW\circ f\subseteq \cV$ and $M(w) = L(w\circ f)$ for all $w\in\cW$. And since $L\leadsto K$ there exists a Borel function $g:\cX\to\cY$ such that $\cV\circ g\subseteq \cU$ and $L(v) = K(v\circ g)$ for all $v\in\cV$. Hence, $h=f\circ g:\cX\to\cZ$ implies $\cW\circ h = \cW\circ f\circ g \subseteq \cV\circ g\subseteq\cU$ and $M(w) = L(w\circ f) = K(w\circ f\circ g) = K(w\circ h)$ for all $w\in\cW$, i.e., $M\leadsto K$.

(ii)-(iv) follow in the same way as (i).
\end{proof}

\Cref{lem:transShort} can be seen as shortening the sequence:
\[M\leadsto L\leadsto K \qquad\Rightarrow\qquad M\leadsto K.\]

The next lemma shows, that a strong transformation $L\strans K$ implies the reverse transformation $K\trans L$.

\begin{lem}\label{lem:backTrans}
Let $\cX$ and $\cY$ be Souslin sets, $\cU$ and $\cV$ vector spaces of real functions on $\cX$ resp.\ $\cY$, and $L:\cV\to\rset$ and $K:\cU\to\rset$ be linear functionals. Then $L\strans K$ implies $K\trans L$.
\end{lem}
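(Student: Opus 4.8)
The plan is to build the reverse transformation from a Borel right inverse of the transforming map, obtained via Jankoff's Theorem. Since $L\strans K$, there is a surjective Borel map $f:\cX\twoheadrightarrow\cY$ with $\cV\circ f=\cU$ and $L(v)=K(v\circ f)$ for all $v\in\cV$. The Souslin sets $\cX$ and $\cY$, equipped with their subspace topologies, are Souslin spaces, so \Cref{thm:jankoff} applies to the surjective Borel map $f$ and produces a Borel measurable map $g:\cY\to\cX$ with $f\circ g=\id_\cY$. I would then claim that $g$ witnesses $K\trans L$.

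The verification is a short diagram chase. First, $\cU\circ g\subseteq\cV$: given $u\in\cU=\cV\circ f$, choose $v\in\cV$ with $u=v\circ f$; then $u\circ g=v\circ f\circ g=v\circ\id_\cY=v\in\cV$ (note this also shows $v=u\circ g$ is forced, so there is no ambiguity from a possibly non-unique representation $u=v\circ f$). Second, the functional identity: for $u=v\circ f\in\cU$ we compute $K(u)=K(v\circ f)=L(v)=L(u\circ g)$, using the hypothesis $L(v)=K(v\circ f)$ and the previous line. Thus $f\circ g=\id_\cY$ together with $\cV\circ f=\cU$ converts the ``pull-back along $f$'' identity into the ``pull-back along $g$'' identity, which is precisely $K\trans L$.

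I do not expect a serious obstacle here: beyond \Cref{thm:jankoff} there is essentially no analytic content. The one point that must be handled with care is that the argument genuinely uses the \emph{strong} hypothesis $\cV\circ f=\cU$, not merely $\cV\circ f\subseteq\cU$ — otherwise an arbitrary $u\in\cU$ need not be of the form $v\circ f$, and then neither $\cU\circ g\subseteq\cV$ nor $K(u)=L(u\circ g)$ can be read off. Surjectivity of $f$, also part of the definition of $\strans$, is exactly what is needed for \Cref{thm:jankoff} to be applicable. So the ``hard part'' is simply recognizing that Jankoff's Theorem is the right tool and that the Souslin hypotheses make it available.
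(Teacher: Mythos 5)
Your proof is correct and follows essentially the same route as the paper: invoke \Cref{thm:jankoff} to get a Borel right inverse $g$ of the surjective Borel map $f$, then use $\cV\circ f=\cU$ and $f\circ g=\id_\cY$ to verify $\cU\circ g\subseteq\cV$ and $K(u)=L(u\circ g)$. The paper additionally notes that the representative $v$ in $u=v\circ f$ is unique (and that in fact $\cU\circ g=\cV$), which your parenthetical remark covers in substance.
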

\begin{proof}
Since $L\strans K$ there exists a surjective Borel function $f:\cX\to\cY$ such that $L(v) = K(v\circ f)$ and $\cV\circ f = \cU$. Since $f$ is surjective by \Cref{thm:jankoff} there exists a Borel function $g:\cY\to\cX$ such that $f(g(y))=y$ for all $y\in\cY$. Let $u\in\cU = \cV\circ f$, then $v$ in $u = v\circ f$ is unique since for $v_1$ and $v_2$ with that property we have
\[v_1 = v_1\circ f\circ g = u\circ g = v_2\circ f\circ g = v_2.\]
Hence, $\cU\circ g = \cV$ and for all $u\in\cU$ we have
\[K(u) = K(v\circ f) = L(v) = L(v\circ f\circ g) = L(u\circ g).\qedhere\]
\end{proof}

While we have so far only transformed linear functionals, the importance of the transformation is revealed in the following result. It shows that the property of being a moment functional is preserved in one or both directions.

\begin{thm}\label{thm:transMomProp}
Let $\cX$ and $\cY$ be Souslin sets, $\cU$ and $\cV$ vector spaces of real functions on $\cX$ resp.\ $\cY$, and $L\!:\cV\to\rset$ and $K\!:\cU\to\rset$ be linear functionals. If $L\trans K$, then
\begin{enumerate}[(i)]
\item $K$ is a moment functional
\end{enumerate}
implies
\begin{enumerate}[(i)]\setcounter{enumi}{1}
\item $L$ is a moment functional.
\end{enumerate}
If $L\strans K$, then (i) $\Leftrightarrow$ (ii).
\end{thm}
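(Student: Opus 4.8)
The plan is to prove the forward implication (i)$\Rightarrow$(ii) directly from the transformation, and then upgrade to the equivalence in the strong case via \Cref{lem:backTrans}.

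\textbf{Forward direction under $L\trans K$.} Suppose $K:\cU\to\rset$ is a moment functional. Then there is a (non-negative) measure $\nu$ on $\cX$ representing $K$, i.e.\ $K(u)=\int_\cX u(x)~\diff\nu(x)$ for all $u\in\cU$. Since $L\trans K$, fix a Borel function $f:\cX\to\cY$ with $\cV\circ f\subseteq\cU$ and $L(v)=K(v\circ f)$ for all $v\in\cV$. The natural candidate for a representing measure of $L$ is the pushforward $\mu:=\nu\circ f^{-1}$, which is a well-defined measure on $\cY$ because $f$ is measurable (as noted after \Cref{thm:jankoff}, and used in \Cref{lem:integralTrans}). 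For $v\in\cV$ we then compute
\[L(v) = K(v\circ f) = \int_\cX (v\circ f)(x)~\diff\nu(x) \overset{\text{\Cref{lem:integralTrans}}}{=} \int_\cY v(y)~\diff(\nu\circ f^{-1})(y) = \int_\cY v(y)~\diff\mu(y),\]
so $\mu$ represents $L$ and (ii) holds. Applying \Cref{lem:integralTrans} requires that $v\circ f$ be $\nu$-integrable, which holds because $v\circ f\in\cU$ and $K(v\circ f)$ is a well-defined real number; strictly, one should run the argument for $v_+$ and $v_-$ separately as in the definition of the Lebesgue integral, but this is routine.

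\textbf{Equivalence under $L\strans K$.} Assume now $L\strans K$. One implication, (i)$\Rightarrow$(ii), is already covered since $L\strans K$ implies $L\trans K$. For the converse, suppose (ii) holds, i.e.\ $L$ is a moment functional. By \Cref{lem:backTrans}, $L\strans K$ implies $K\trans L$. Now apply the forward direction already proved, but with the roles of $K$ and $L$ exchanged: from $K\trans L$ and the fact that $L$ is a moment functional we conclude that $K$ is a moment functional, which is (i). Hence (i)$\Leftrightarrow$(ii).

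\textbf{Main obstacle.} The only genuinely substantive point is the integrability bookkeeping when invoking \Cref{lem:integralTrans}: that lemma is stated for real-valued $f$ that need not be non-negative, but its proof reduces to the non-negative case, and one must make sure the hypothesis ``$f\circ g$ is $\mu$-integrable'' (in the notation of that lemma) is met. In our setting this is immediate because the transformed function lies in the vector space on which the representing functional is defined, so its integral against the representing measure is finite by assumption. Everything else — that pushforwards of measures are measures, that $\cV\circ f$ lands in $\cU$ — is guaranteed directly by \Cref{dfn:transform} and the Souslin/measurability framework set up in \Cref{sec:prel}, so no further work is needed.
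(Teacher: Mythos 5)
Your proposal is correct and follows essentially the same route as the paper: push forward the representing measure of $K$ along $f$ using \Cref{lem:integralTrans} for (i)$\Rightarrow$(ii), and in the strong case invoke \Cref{lem:backTrans} to get $K\trans L$ and reapply the forward direction. The extra remarks on integrability bookkeeping are fine but add nothing beyond what the paper implicitly assumes.
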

\begin{proof}
Since $L\leadsto K$ there exists a Borel function $f:\cX\to\cY$ such that $\cV\circ f\subseteq \cU$ and $L(v) = K(v\circ f)$ for all $v\in\cV$.

(i)$\to$(ii): Let $K$ be a moment functional with representing measure $\nu$ on $\cX$, then
\[L(v) = K(v\circ f) = \int_\cX (v\circ f)(x)~\diff\nu(x) \overset{\text{\Cref{lem:integralTrans}}}{=} \int_\cY v(y)~\diff(\nu\circ f^{-1})(y),\]
i.e., $\nu\circ f^{-1}$ is a representing measure of $L$ and hence $L$ is a moment functional.

(ii)$\to$(i): When $L\strans K$, then \Cref{lem:backTrans} implies $K\trans L$.
\end{proof}

The importance of the transformation and hence \Cref{thm:transMomProp} can be seen in
\begin{equation}\label{eq:transGraph}
\begin{matrix}
&& L_8 && L_6 &\trans&L_5\\
&&\downtrans&&&&\downtrans\\
L_4 &\trans & L_3 & \trans & L_2 & \trans & L_1 &\trans & K\\
&&&&\uptrans\\
&&&& L_7
\end{matrix}.
\end{equation}
If $K$ is a moment functional, then all $L_1,\dots, L_8$ are moment funtionals. Assume in (\ref{eq:transGraph}) all transformations $\trans$ are strong transformations $\strans$. Then: If one $L_i$ or $K$ is a moment functional, then all $K,L_1,\dots,L_8$ are moment functionals.

Note, the transformation $\trans$ in \Cref{dfn:transform} also covers extensions and restrictions of functionals. Let $f = \id_\cX$ and let $\cV$ be a vector space of measurable functions on $\cX$, $\cV_0\subseteq\cV$ be a linear subspace, and $L:\cV\to\rset$ a linear functional. Then
\[L|_{\cV_0}\overset{\id_\cX}{\trans} L.\]
Or if $L_i:\cV_i\to\rset$ are extensions of $L$, i.e., $\cV\subseteq\cV_1\subseteq\cV_2\subseteq\dots\subseteq\cV_k$ with $L_i = L_{i+1}|_{\cV_i}$, then
\[L\overset{\id_\cX}{\trans} L_1 \overset{\id_\cX}{\trans} L_2 \overset{\id_\cX}{\trans}\dots \overset{\id_\cX}{\trans} L_k
\qquad\text{or short}\qquad
L\trans L_1\trans L_2\trans\dots\trans L_k\]
shows that if $L_k$ is a moment functional, then all $L_i$ and $L$ are moment functionals.

So far we introduced the transformation of a linear functional and gained basic properties. But as seen from \Cref{thm:intro} and \Cref{cor:intro}, there are non-trivial results for the transformations. The next section is devoted to these non-trivial transformation results.

\section{Non-trivial Transformations of linear Functionals}%%%
%%%%%%%%%%%%%%%%%%%%%%%%%%%%%%%%%%%%%%%%%%%%%%%%%%%%%%%%%%%%%%
\label{sec:main}

Let $\cV$ be (finite or infinite dimensional) vector space of measurable functions on a Souslin space $\cX$. Then by \Cref{thm:isoOne} there exist a Souslin set $S\subseteq [0,1]$ and an isomorphism $h:(S,\fB(S))\to(\cX,\fB(\cX))$. This implies that $\tilde{L}:\tilde{\cV}\to\rset$ with $\tilde{\cV}:=\{f\circ h \,|\, f\in\cV\}$ and $\tilde{L}(g) := L(g\circ h^{-1})$, $g\in\tilde{\cV}$, is a linear functional but now the functions $\tilde{\cV}$ live on $S\subseteq [0,1]$. Especially, $L$ is a moment functional if and only if $\tilde{L}$ is a moment functional.

For example, let $L:\rset[x_1,\dots,x_n]\to\rset$ be a moment functional with $\cX=\rset^n$. Then $h=(h_1,\dots,h_n):S\subseteq [0,1]\to\rset^n$ is an isomorphism between $(S,\fB(S))$ and $(\rset^n,\fB(\rset^n))$ and $\tilde{L}$ is a moment functional with $\tilde{L}(h^\alpha) = L(x^\alpha)$.

However, by \Cref{rem:nonBorelSouslin} $S$ needs not to be a Borel set. So determining whether $\tilde{L}$ is a moment functional might be as hard as determining whether $L$ is a moment functional. Additionally, $\tilde{L}$ now no longer lives on polynomials but evaluates measurable functions $h^\alpha = h_1^{\alpha_1}\cdots h_n^{\alpha_n}$ with $\alpha=(\alpha_1,\dots,\alpha_n)\in\nset_0^n$.

Allowing general Borel measurable functions on measurable spaces instead of isomorphisms we get \Cref{thm:intro} in the introduction. There we showed that any moment functional can be expressed as integration with respect to the Lebesgue measure $\lambda$ on $[0,1]$.

The next result shows that any moment functional with an atomless representing measure has a ``direction'' in which it looks like (\ref{eq:lebesgueMeasure}), i.e, the Lebesgue measure on $[0,1]$ evaluated on $\rset[t]$.

\begin{prop}\label{prop:oneLebesgueDirection}
Let $\cV$ be a vector space of real measurable functions on a measurable space $(\cX,\cA)$ such that there exists an element $v\in\cV$ with $1\leq v$ on $\cX$ and let $L:\cV\to\rset$ be a moment functional which has an atomless representing measure. Then there exists a measurable function $f:\cX\to [0,1]$ and an extension $\overline{L}:\cV+\rset[f]\to\rset$ of $L$ such that $\overline{L}(f^d) = \frac{\overline{L}(1)}{d+1}$ for all $d\in\nset_0$, i.e., $\tilde{L}:\rset[t]\to\rset$ with $\tilde{L}(t^d) := \overline{L}(f^d)$ for all $d\in\nset_0$ is represented by $\overline{L}(1)\cdot\lambda$ where $\lambda$ is the Lebesgue measure $\lambda$ on $[0,1]$.
\end{prop}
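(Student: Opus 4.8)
The plan is to produce the measurable function $f:\cX\to[0,1]$ from the atomless representing measure by means of \Cref{prop:makingLebesgue}, but first one has to normalize: the representing measure $\mu$ of $L$ need not be a probability measure, so I would work with the rescaled measure. Concretely, pick $v\in\cV$ with $1\le v$ on $\cX$; then $\mu(\cX)=\int_\cX 1\,\diff\mu\le\int_\cX v\,\diff\mu = L(v)<\infty$, so $\mu$ is finite, and we set $c:=\mu(\cX)=L(1)$ (note $1\in\cV$ is needed, or at least that $L(1)$ makes sense; if $1\notin\cV$ one first extends $L$ trivially to $\cV+\rset\cdot 1$ by $\overline L(1):=\mu(\cX)$, which is consistent since $\mu$ represents $L$). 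If $c=0$ then $\mu=0$, $L=0$, and one may take $f\equiv 0$; so assume $c>0$ and put $\mu' := c^{-1}\mu$, an atomless probability measure on $(\cX,\cA)$.

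By \Cref{prop:makingLebesgue} applied to $\mu'$ there is an $\cA$-measurable $f:\cX\to[0,1]$ with $\mu'\circ f^{-1}=\lambda$, hence $\mu\circ f^{-1}=c\lambda$. Now define $\overline L:\cV+\rset[f]\to\rset$ by
\[
\overline L\Bigl(v + \sum_{d} a_d f^d\Bigr) := L(v) + \sum_d a_d\cdot c\cdot\frac{1}{d+1},
\]
i.e.\ $\overline L(w) := \int_\cX w(x)\,\diff\mu(x)$ for every $w\in\cV+\rset[f]$. This is well defined and linear: any $w$ in $\cV+\rset[f]$ is $\mu$-integrable (elements of $\cV$ because $\mu$ represents $L$, and each $f^d$ because $0\le f^d\le 1$ and $\mu$ is finite), and the integral does not depend on the decomposition of $w$; it extends $L$ since $\mu$ represents $L$. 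Finally, by \Cref{lem:integralTrans} with $g=f$,
\[
\overline L(f^d) = \int_\cX f(x)^d\,\diff\mu(x) = \int_0^1 t^d\,\diff(\mu\circ f^{-1})(t) = c\int_0^1 t^d\,\diff\lambda(t) = \frac{c}{d+1} = \frac{\overline L(1)}{d+1}
\]
for all $d\in\nset_0$, where $\overline L(1)=\int_\cX 1\,\diff\mu = c$. Thus $\tilde L:\rset[t]\to\rset$, $\tilde L(t^d):=\overline L(f^d)$, satisfies $\tilde L(t^d) = c/(d+1) = \int_0^1 t^d\,\diff(c\lambda)(t)$, so $\tilde L$ is represented by $c\lambda = \overline L(1)\cdot\lambda$, which is the asserted $[0,1]$-moment functional (a scalar multiple of $L_{\text{Leb}}$ from \Cref{exm:lebOnUnit}).

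The only genuine subtlety — and the step I would be most careful about — is the well-definedness and linearity of $\overline L$ on the (possibly non-direct) sum $\cV+\rset[f]$: one must check that defining $\overline L(w):=\int_\cX w\,\diff\mu$ is unambiguous, which it is precisely because a single fixed measure $\mu$ represents $L$ on $\cV$ and simultaneously integrates the monomials $f^d$, so the value is intrinsic to $w$ and independent of any representation $w=v+\sum a_d f^d$. Everything else is a routine application of \Cref{prop:makingLebesgue} (for the existence of $f$) and \Cref{lem:integralTrans} (for the pushforward computation); the hypothesis $1\le v\in\cV$ is used only to guarantee $\mu(\cX)<\infty$, which is what makes the normalization $\mu'=c^{-1}\mu$ legitimate and $\mu'$ a probability measure.
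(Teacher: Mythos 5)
Your proposal is correct and follows essentially the same route as the paper: obtain $f$ from \Cref{prop:makingLebesgue}, define $\overline{L}$ as integration against the representing measure $\mu$, and compute $\overline{L}(f^d)$ via \Cref{lem:integralTrans}. In fact you are slightly more careful than the paper's own proof, which applies \Cref{prop:makingLebesgue} without first normalizing $\mu$ to the atomless probability measure $c^{-1}\mu$ (and silently absorbs the factor $L(1)$ at the end), and which defines $\overline{L}$ only on $\rset[f]$ rather than addressing well-definedness on $\cV+\rset[f]$ as you do.
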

\begin{proof}
Let $\mu$ be a representing measure of $L$. By \Cref{prop:makingLebesgue} there exists a measurable $f:\rset^n\to [0,1]$ such that $\mu\circ f^{-1} = \lambda$ on $[0,1]$. Since $f$ is measurable, $|f|\leq 1$ on $\rset^n$, and $L(1)<\infty$, all $f^d$, $d\in\nset_0$, are $\mu$-integrable:
\[\left|\int_{\rset^n} f^d(x)~\diff\mu(x)\right| \leq \int_{\rset^n} |f(x)|^d~\diff\mu(x)\leq \int_{\rset^n} 1~\diff\mu(x) = L(1).\]
Define $\overline{L}:\rset[f]\to\rset$ by $\overline{L}(f^d) := \int_{\rset^n} f^d(x)~\diff\mu(x)$. Then
\[\overline{L}(f^d) = \int_{\rset^n} f^d(x)~\diff\mu(x) \overset{\text{\Cref{lem:integralTrans}}}{=} \int_0^1 t^d~\diff(\mu\circ f^{-1})(t) = \int_0^1 t^d~\diff\lambda(t) = \frac{L(1)}{d+1}\]
is represented by $L(1)\cdot\lambda$ on $[0,1]$.
\end{proof}

Hence, for any moment functional with an atomless representing measure there exists a function $f$ (a direction) such that it acts on $\rset[f]\cong\rset[t]$ as (\ref{eq:lebesgueMeasure}), i.e., the Lebesgue measure on $[0,1]$. Atomless representing measures are very common in the truncated moment problem. Under some mild conditions every truncated moment functional in the interior of the truncated moment cone has an atomless representing measure. We can even find a linear combination of Gaussian distributions (Gaussian mixture) as a representing measure. This was proven in \cite{didio18gaussian} for the first time.

Using the transformation $\trans$ formulation with $L_{\text{Leb}}$ from \Cref{exm:lebOnUnit} we can visualize \Cref{prop:oneLebesgueDirection} as
\[\begin{matrix}
&& L:\cV\to\rset\\
&& \downtrans\, {\scriptstyle\id_\cX}\\
L_{\text{Leb}}:\rset[t]\to\rset & \overset{f}{\trans} & \overline{L}:\cV+\rset[f]\to\rset.
\end{matrix}\]

Note the reverse statement of \Cref{prop:oneLebesgueDirection}. If a linear functional $L$ can never be (continuously) extended to $\rset[f]$ with $\overline{L}(f^d) = \frac{\overline{L}(1)}{d+1}$ for some measurable $f$, then $L$ is not a moment functional with an atomless representing measure.

\Cref{thm:intro} and \Cref{prop:oneLebesgueDirection} are very general. Especially \Cref{thm:intro} works on arbitrary Borel sets of $\rset^n$ (in fact on every Souslin space). For this generality we have to pay the price that $f$ is in general only measurable. Additionally, since we always express $L$ as integration with respect to $\lambda$ on $[0,1]$, the chosen $f$ depends on $L$. If we want additional properties for $f$ to hold, especially continuity and independence from $L$, then we need to restrict the functionals we want to transform. This can be achieved by restricting the investigation to $K$-moment functionals on compact and path-connected sets $K\subset\rset^n$. Then from the \Cref{thm:hahnM} we get the existence of surjective and continuous functions $f:[0,1]\to\ K$. We find the following result.

\begin{thm}\label{thm:first}
Let $n\in\nset$ be a natural number, $K\subset\rset^n$ be a compact and path-connected set, and let $\cV$ be a vector space of real measurable functions on $(K,\fB(K))$. Then any surjective and continuous function $f:[0,1]\to K$ induces for any linear functional $L:\cV\to\rset$ a strong and continuous transformation
\[L:\cV\to\rset \quad\overset{sc:f}{\trans}\quad \tilde{L}:\cV\circ f\to\rset,\]
i.e., for any linear functional $L:\cV\to\rset$ the following are equivalent:
\begin{enumerate}[(i)]
\item $L:\cV\to\rset$ is a $K$-moment functional.

\item $\tilde{L}:\cV\circ f\to\rset$ defined by $\tilde{L}(v\circ f) := L(v)$ is a $[0,1]$-moment functional.
\end{enumerate}
If $\tilde{\mu}$ is a representing measure of $\tilde{L}$, then $\tilde{\mu}\circ f^{-1}$ is a representing measure of $L$.

There exists a measurable function $g:K\to [0,1]$ such that $f(g(x))=x$ for all $x\in K$ and if $\mu$ is a representing measure of $L$, then $\mu\circ g^{-1}$ is a representing measure of $\tilde{L}$.
\end{thm}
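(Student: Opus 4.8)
The plan is to build the transformation directly from the Hahn--Mazurkiewicz--type data: since $K\subset\rset^n$ is compact and path-connected, \Cref{thm:hahnM} (together with the remark that compact path-connected subsets of $\rset^n$ are exactly the compact, connected, locally connected ones, equivalently the continuous images of $[0,1]$) furnishes a surjective continuous $f:[0,1]\twoheadrightarrow K$; this is the $f$ in the statement. First I would check that $f$ induces a strong continuous transformation $L\sctrans\tilde L$. Here $\cX=[0,1]$, $\cY=K$, the function $f$ is surjective and continuous, and $\cV\circ f$ is by construction the codomain of $\tilde L$, so $\cV\circ f=\cV\circ f$ trivially; one only needs that $\tilde L$ is well defined, i.e.\ $v_1\circ f=v_2\circ f$ implies $L(v_1)=L(v_2)$. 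Since $f$ is surjective, $v_1\circ f=v_2\circ f$ forces $v_1=v_2$ pointwise on $K$, so well-definedness and $\rset$-linearity of $\tilde L$ are immediate, and $L(v)=\tilde L(v\circ f)$ holds by definition. Hence $L\sctrans\tilde L$, and in particular $L\trans\tilde L$ and $\tilde L\trans L$ by \Cref{lem:backTrans}.

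The equivalence (i)$\Leftrightarrow$(ii) then follows from \Cref{thm:transMomProp}: $L\sctrans\tilde L$ gives "$L$ is a moment functional $\Leftrightarrow$ $\tilde L$ is a moment functional." The only extra content is the support constraints. For (ii)$\to$(i): if $\tilde\mu$ represents $\tilde L$, then $\supp\tilde\mu\subseteq[0,1]$, and by \Cref{lem:integralTrans} applied to $g=f$ (note $f$ is Borel, being continuous, and the relevant functions are $\tilde\mu$-integrable since $v$ is bounded on the compact $K$ and $v\circ f$ bounded on $[0,1]$),
\[L(v)=\tilde L(v\circ f)=\int_0^1 (v\circ f)(t)~\diff\tilde\mu(t)=\int_K v(x)~\diff(\tilde\mu\circ f^{-1})(x),\]
so $\tilde\mu\circ f^{-1}$ represents $L$; and $\supp(\tilde\mu\circ f^{-1})\subseteq f([0,1])=K$ because $f$ is surjective, so $L$ is a $K$-moment functional. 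This proves both the implication and the asserted formula $\tilde\mu\circ f^{-1}$ for a representing measure of $L$.

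For (i)$\to$(ii) and the last paragraph: here the key tool is \Cref{thm:jankoff}. Since $f:[0,1]\to K$ is a surjective Borel map between Souslin spaces ($[0,1]$ and the compact $K\subseteq\rset^n$ are Souslin by \Cref{exm:unitInterval} and \Cref{lem:souslin}), there is a Borel measurable $g:K\to[0,1]$ with $f(g(x))=x$ for all $x\in K$. If $\mu$ represents $L$ then, again by \Cref{lem:integralTrans} with the roles "$g$ pushes forward" and using $v=(v\circ f)\circ g$ (which holds on $K$ since $f\circ g=\id_K$), we get
\[\tilde L(v\circ f)=L(v)=\int_K v(x)~\diff\mu(x)=\int_K (v\circ f\circ g)(x)~\diff\mu(x)=\int_0^1 (v\circ f)(t)~\diff(\mu\circ g^{-1})(t),\]
so $\mu\circ g^{-1}$ represents $\tilde L$, and its support lies in $g(K)\subseteq[0,1]$, giving a $[0,1]$-moment functional. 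I expect the only mildly delicate point to be bookkeeping the integrability/well-definedness hypotheses of \Cref{lem:integralTrans} in each direction and the observation that $v$ need not be bounded a priori — but on the compact $K$ every real measurable $v\in\cV$ that is $\mu$-integrable stays $\mu$-integrable after pulling back along the continuous $f$ and pushing along the Borel $g$, since push-forward preserves integrability by \Cref{lem:integralTrans} itself; no genuine obstacle arises, the substance is entirely carried by \Cref{thm:hahnM}, \Cref{thm:jankoff}, and \Cref{thm:transMomProp}.
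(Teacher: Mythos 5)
Your proposal is correct and follows essentially the same route as the paper: Hahn--Mazurkiewicz/surjectivity for $f$, Jankoff's Theorem for the Borel right inverse $g$, well-definedness of $\tilde L$, and then \Cref{lem:integralTrans} to push $\mu$ forward along $g$ for (i)$\to$(ii) and $\tilde\mu$ forward along $f$ for (ii)$\to$(i). The only cosmetic differences are that you verify well-definedness directly from surjectivity of $f$ (the paper routes this through $g$) and that you additionally cite \Cref{thm:transMomProp}, which is subsumed by the explicit computations you carry out anyway.
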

\begin{proof}
Since $K\subset\rset^n$ is compact and path-connected, by the \Cref{thm:hahnM} there exists a continuous and surjective function $f:[0,1]\to K$. By \Cref{exm:unitInterval} or \Cref{lem:souslin} $[0,1]$ and $K$ are Souslin spaces and $f$ is Borel measurable (since it is continuous). By \Cref{thm:jankoff} there exists a measurable function $g:K\to [0,1]$ such that
\[f(g(x))=x\quad \text{for all}\ x\in K.\tag{$*$}\]
$(*)$ implies that $\tilde{L}$ is well-defined by $\tilde{L}(v\circ f) = L(v)$. To show this, for $\tilde{v}\in\tilde{\cV}$ let $v_1,v_2\in\cV$ be such that $v_1\circ f = \tilde{v} = v_2\circ f$. But then $g$ resp.\ $(*)$ implies $v_1 = v_1\circ f\circ g = \tilde{v}\circ g = v_2\circ f\circ g = v_2$, i.e., for any $\tilde{v}\in\cV$ there is a unique $v\in\cV$ with $\tilde{v} = v\circ f$.

(i)$\to$(ii): Let $L:\cV\to\rset$ be a $K$-moment functional and $\mu$ be a representing measure of $L$, i.e., $\supp\mu\subseteq K$ and
\[L(v) = \int_K v(x)~\diff\mu(x) \quad\text{for all}\ v\in\cV.\]
Then
\begin{align*}
\tilde{L}(v\circ f) = L(v) = \int_K v(x)~\diff\mu(x)
&\overset{\phantom{\text{\Cref{lem:integralTrans}}}}{=} \int_K (v\circ f)(g(x))~\diff\mu(x)\\
&\overset{\text{\Cref{lem:integralTrans}}}{=} \int_0^1 (v\circ f)(y)~\diff(\mu\circ g^{-1})(y),
%&\overset{\phantom{\text{\Cref{lem:equivMeasurable}}}}{=} \int_0^1 (v\circ f)(y)~\diff\tilde{\mu}(y),
\end{align*}
i.e., $\mu\circ g^{-1}$ is a representing measure of $\tilde{L}$ and hence $\tilde{L}$ is a $[0,1]$-moment functional.

(ii)$\to$(i): Let $\tilde{\mu}$ be a representing measure of $\tilde{L}:\tilde{\cV}\to\rset$. Then
\begin{align*}
L(v) = \tilde{L}(v\circ f) = \int_0^1 (v\circ f)(y)~\diff\tilde{\mu}(y) \overset{\text{\Cref{lem:integralTrans}}}{=} \int_K v(x)~\diff(\tilde{\mu}\circ f^{-1})(x),
\end{align*}
i.e., $\tilde{\mu}\circ f^{-1}$ is a representing measure of $L$ with $\supp\tilde{\mu}\circ f^{-1}\subseteq K$ and $L$ is therefore a $K$-moment sequence.
\end{proof}

In the previous result the functions $f:[0,1]\to K$ and $g:K\to [0,1]$ do not depend on the functions $\cV$ or the functional $L:\cV\to\rset$. They depend only on $K$. We can therefore fix such functions $f$ and $g$ and investigate any $L$ resp.\ $\tilde{L}$.

If the continuous $f$ can be chosen for each $L$, then in \Cref{thm:first}(ii) we can even ensure that $\tilde{L}$ is represented by the Lebesgue measure $\lambda$ on $[0,1]$ if and only if $L$ has a representing measure $\mu$ with $\supp\mu = K$, see \Cref{thm:contFLebesgue} below.

In \Cref{thm:first} we required that $K$ consists of one path-connected component. If $K$ consists of more than one component, then we can glue the parts together.

\begin{cor}\label{cor:first}
Let $n\in\nset$ and $K\subset\rset^n$ be the union of $k\in\nset\cup\{\infty\}$ compact, path-connected and pairwise disjoint sets $K_i\subset\rset^n$: $K = \bigcup_{i=1}^k K_i$. Let $\cV$ be a vector space of real valued measurable functions on $(K,\fB(K))$. There exists a continuous surjective function
\[f:\bigcup_{i=1}^k [2i-2,2i-1]\to K\]
such that for any linear functional $L:\cV\to\rset$ the following are equivalent:
\begin{enumerate}[(i)]
\item $L:\cV\to\rset$ is a $K$-moment functional.

\item $\tilde{L}:\tilde{\cV}\to\rset$ on $\tilde{\cV}:= \{v\circ f \,|\, v\in\cV\}$ and defined by $\tilde{L}(v\circ f) := L(v)$ is a $\bigcup_{i=1}^k [2i-2,2i-1]$-moment functional.
\end{enumerate}
\end{cor}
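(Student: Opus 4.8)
The plan is to reduce \Cref{cor:first} to a componentwise application of the construction in the proof of \Cref{thm:first}, after assembling the pieces into a single map. First I would apply the \Cref{thm:hahnM} to each component separately: since $K_i\subset\rset^n$ is compact and path-connected there is a continuous surjection $[0,1]\twoheadrightarrow K_i$, and precomposing with the affine bijection $[2i-2,2i-1]\to[0,1]$, $t\mapsto t-(2i-2)$, produces a continuous surjection $\tilde f_i:[2i-2,2i-1]\twoheadrightarrow K_i$. On $D:=\bigcup_{i=1}^k[2i-2,2i-1]$ define $f$ by $f|_{[2i-2,2i-1]}:=\tilde f_i$. Since the intervals $[2i-2,2i-1]$ are separated by the open gaps $(2i-1,2i)$, each of them is relatively clopen in $D$, so $f$ is well defined and continuous on $D$, and it is surjective onto $K=\bigcup_i K_i$ because each $\tilde f_i$ is surjective onto $K_i$. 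This $f$ depends only on the sets $K_i$, not on $\cV$ or $L$.

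Next I would verify the hypotheses required to reuse the \Cref{thm:jankoff}/\Cref{lem:integralTrans} argument. The set $D$ is closed in $\rset$ (its complement in $\rset$ is a union of open intervals), hence Souslin by \Cref{lem:souslin}(ii); $K$ is a countable union of compact, hence Souslin, sets and therefore Souslin by \Cref{lem:souslin}(iv); and $f:D\to K$ is a surjective Borel map, being continuous. By the \Cref{thm:jankoff} there is a Borel function $g:K\to D$ with $f(g(x))=x$ for all $x\in K$. From here the proof is the proof of \Cref{thm:first} read with $[0,1]$ replaced by $D$: surjectivity of $f$ (via $v_1\circ f=v_2\circ f\Rightarrow v_1=v_1\circ f\circ g=v_2\circ f\circ g=v_2$ on $K$) makes $\tilde L(v\circ f):=L(v)$ well defined on $\tilde\cV=\{v\circ f:v\in\cV\}$; for (i)$\Rightarrow$(ii), if $\mu$ represents $L$ then $v=v\circ f\circ g$ on $K$ together with \Cref{lem:integralTrans} shows $\mu\circ g^{-1}$ represents $\tilde L$; and for (ii)$\Rightarrow$(i), if $\tilde\mu$ represents $\tilde L$ then \Cref{lem:integralTrans} shows $\tilde\mu\circ f^{-1}$ represents $L$.

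The one point that genuinely needs attention — and the main obstacle in the $k=\infty$ case — is the bookkeeping of the topological support of the transported measure. For (i)$\Rightarrow$(ii) there is nothing to do: $\mu\circ g^{-1}$ is carried by $g(K)\subseteq D$ and $D$ is closed, so $\supp(\mu\circ g^{-1})\subseteq D$. For (ii)$\Rightarrow$(i), $\tilde\mu\circ f^{-1}$ is carried by $f(D)=K$, i.e.\ assigns zero mass to $\rset^n\setminus K$; if $k<\infty$ then $K$ is compact, hence closed, so $\supp(\tilde\mu\circ f^{-1})\subseteq K$ and $L$ is a $K$-moment functional in the exact sense of the paper. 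If $k=\infty$ the components may accumulate and $K$ need not be closed, so I would either adopt the convention that a $K$-moment functional is one admitting a representing measure $\nu$ with $\nu(\rset^n\setminus K)=0$, or impose that $K$ is closed; under either convention the argument above closes without change. I expect this support subtlety, rather than any of the measure transport steps, to be the item requiring an explicit comment in the final write-up.
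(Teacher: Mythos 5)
Your proposal is correct and follows essentially the same route as the paper: componentwise Hahn--Mazurkiewicz surjections translated to the intervals $[2i-2,2i-1]$, glued into one continuous surjection $f$, a right inverse $g$ from Jankoff's Theorem, and then the argument of \Cref{thm:first} verbatim (the paper obtains $g$ by gluing componentwise Jankoff inverses $g_i$, whereas you apply Jankoff once to $f:D\to K$ after checking $D$ and $K$ are Souslin --- an immaterial difference). Your closing remark on the topological support of $\tilde{\mu}\circ f^{-1}$ when $k=\infty$ and $K$ fails to be closed is a fair observation that the paper's proof passes over in silence.
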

\begin{proof}
It is sufficient to show the existence of the function $f$ (and $g$). The rest of the proof is verbatim the same as in the proof of \Cref{thm:first}.

Since for each $i=1,2,\dots,k$ the set $K_i$ is compact and path-connected and the translation of the unit interval $[0,1]$ to $[2i-2,2i-1]$ is continuous, by the \Cref{thm:hahnM} there exists a continuous and surjective $f_i:[2i-2,2i-1]\to K_i$. Define $f:\bigcup_{i=1}^k [2i-2,2i-1]\to K$ by $f(x) := f_i(x)$ if $x\in [2i-2,2i-1]$ for an $i\in\{1,2,\dots,k\}$. Then $f$ is continuous and surjective.

For $g:K\to \bigcup_{i=1}^k [2i-2,2i-1]$ we proceed in the same way. By \Cref{thm:jankoff} for each $f_i:[2i-2,2i-1]\to K_i$ there exists a measurable $g_i: K_i\to [2i-2,2i-1]$. Hence, we define $g$ as $g(x) := g_i(x)$ if $x\in K_i$.
\end{proof}

Note, that when $K$ consists of countably many compact and path-connected components ($k=\infty$), then in \Cref{cor:first} $f$ is no longer supported on a bounded (and therefore compact) set: $\bigcup_{i=1}^k [2i-2,2i-1]$. But if e.g.\ $K$ is a compact and semi-algebraic set, then $K$ has only finitely many path-connected components.

An advantage in \Cref{thm:first} is that $f=(f_1,\dots,f_n):[0,1]\to K\subset\rset^n$ is continuous. Hence, all coordinate functions $f_i:[0,1]\to\rset$ are continuous. By the Stone--Weierstrass Theorem we can approximate each $f_i$ in the $\sup$-norm on $[0,1]$ by polynomials to any precision. $f$ can therefore be approximated to any precision by a polynomial map. A representing measure $\tilde{\mu}$ of $\tilde{L}$ provides the representing measure $\tilde{\mu}\circ f^{-1}$ of $L$. An approximation $f_\varepsilon\in\rset[x_1,\dots,x_n]^n$ of $f$, i.e., $\sup_{t\in [0,1]} \|f(t) - f_\varepsilon(t)\|<\varepsilon$ with any (fixed) norm $\|\,\cdot\,\|$ on $\rset^n$ and $\varepsilon>0$, provides an approximate representing measure $\tilde{\mu}\circ f_\varepsilon^{-1}$ of $L$.

Let $K\subset\rset^n$ be a compact and path-connected set, $\cV = \rset[x_1,\dots,x_n]$, and $L:\cV\to\rset$ be a linear functional. Then the induced functional $\tilde{L}:\tilde{\cV}\to\rset$ on $[0,1]$ is defined by $\tilde{L}(p\circ f):= L(p)$. It depends on $p\circ f$, i.e., $f^\alpha = f_1^{\alpha_1}\cdots f_n^{\alpha_n}$, $\alpha=(\alpha_1,\dots,\alpha_n)\in\nset_0^n$. So as in \Cref{thm:intro} the algebraic structure of $\rset[x_1,\dots,x_n]$ remains but the domain $K$ is pulled back to $[0,1]$ by the continuous $f$.

That the algebraic structure remains also reveals one big difference between $L$ and $\tilde{L}$. E.g.\ $\cV=\rset[x_1,\dots,x_n]$ separates points and is therefore dense in $C(K,\rset)$. But $f:[0,1]\to K$ is a space filling curve and therefore never injective (Netto's Theorem). Hence, there are $t_1,t_2\in[0,1]$ with $t_1\neq t_2$ and $f(t_1)=f(t_2)$. The set $\tilde{\cV} := \{p\circ f\,|\, p\in\cV\}$ therefore does not separate $t_1$ from $t_2$ and is by the Stone--Weierstrass Theorem not dense in $C([0,1],\rset)$. So the $\tilde{L}$ in \Cref{thm:first} and \Cref{cor:first} can at this point not extended to the \Cref{thm:haus}.

In the next theorem we will identify each $K$-moment functional with a $[0,1]$-moment functional, i.e., the \Cref{thm:haus}.

\begin{thm}\label{thm:second}
Let $n\in\nset$ be a natural number and $K\subset\rset^n$ be a compact and path-connected set. Then there exists a measurable function
\[g:K\to [0,1]\]
such that for all linear functionals $L:\cV\to\rset$ with $1\in\cV\subseteq C(K,\rset)$ the following are equivalent:
\begin{enumerate}[(i)]
\item $L:\cV\to\rset$ is a $K$-moment functional.

\item $L:\cV\to\rset$ continuously\footnote{If $p_i\in\rset[t]$ with $p_i\rightrightarrows p\in C([0,1],\rset)$ and $p\circ g\in\cV$ then $\overline{L}(p_i\circ g)\to L(p\circ g)$.} extends to $\overline{L}:\cV+\rset[g]\to\rset$ such that $\tilde{L}:\rset[t]\to\rset$ defined by $\tilde{L}(t^d):=\overline{L}(g^d)$ for all $d\in\nset_0$ is a $[0,1]$-moment functional, i.e.,
\begin{equation}\label{eq:directionTrans}
\begin{matrix}
&& L:\cV\to\rset\\
&& \downtrans\, {\scriptstyle\id_\cX}\\
\tilde{L}:\rset[t]\to\rset & \overset{g}{\trans} & \overline{L}:\cV+\rset[g]\to\rset.
\end{matrix}
\end{equation}
\end{enumerate}
If $\mu$ is the representing measure of $L$, then $\mu\circ g^{-1}$ represents $\tilde{L}$.

Additionally, there exists a continuous and surjective function $f:[0,1]\to K$ independent on $L$ resp.\ $\tilde{L}$ such that $f(g(x)) = x$ for all $x\in K$ and if $\tilde{\mu}$ is the representing measure of $\tilde{L}$, then $\tilde{\mu}\circ f^{-1}$ is the representing measure of $L$.
\end{thm}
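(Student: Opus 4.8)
The plan is to build $f$ and $g$ once and for all (depending only on $K$), and then verify the equivalence by transporting representing measures along $f$ and $g$ together with a Weierstrass approximation argument on $[0,1]$.

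First I would invoke the Hahn--Mazurkiewicz Theorem (\Cref{thm:hahnM}): since $K\subset\rset^n$ is compact and path-connected, there is a continuous surjection $f:[0,1]\to K$. As $[0,1]$ and $K$ are Souslin spaces (\Cref{exm:unitInterval}, \Cref{lem:souslin}) and $f$ is Borel, Jankoff's Theorem (\Cref{thm:jankoff}) yields a Borel measurable $g:K\to[0,1]$ with $f(g(x))=x$ for all $x\in K$. Neither $f$ nor $g$ refers to $\cV$ or $L$. Since $g(K)\subseteq[0,1]$, every $g^d$ is bounded and measurable, hence $\mu$-integrable for any finite Borel measure $\mu$ on $K$, and $\rset[g]$ consists of $\mu$-integrable functions.

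For (i)$\Rightarrow$(ii): given a representing measure $\mu$ of $L$ (so $\supp\mu\subseteq K$, $\mu(K)=L(1)<\infty$), I would define $\overline{L}:\cV+\rset[g]\to\rset$ directly by $\overline{L}(h):=\int_K h(x)\,\diff\mu(x)$, which is unambiguous because the value depends only on $h$ as a function and every element of $\cV\subseteq C(K,\rset)$ and of $\rset[g]$ is $\mu$-integrable; it extends $L$, and it is continuous in the sense of the footnote since $p_i\rightrightarrows p$ on $[0,1]$ forces $p_i\circ g\rightrightarrows p\circ g$ on $K$, and $\mu(K)<\infty$ allows passing the limit through the integral. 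Then $\tilde L(t^d)=\overline{L}(g^d)=\int_K g(x)^d\,\diff\mu(x)\overset{\text{\Cref{lem:integralTrans}}}{=}\int_0^1 t^d\,\diff(\mu\circ g^{-1})(t)$, so $\mu\circ g^{-1}$ is a measure on $[0,1]$ representing $\tilde L$; in particular $\tilde L$ is a $[0,1]$-moment functional, which also settles the clause ``$\mu\circ g^{-1}$ represents $\tilde L$''.

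For (ii)$\Rightarrow$(i): let $\tilde\mu$ be a representing measure of $\tilde L$ on $[0,1]$ and fix $v\in\cV$. Since $v\circ f\in C([0,1],\rset)$, the Weierstrass approximation theorem gives $p_i\in\rset[t]$ with $p_i\rightrightarrows v\circ f$ on $[0,1]$; hence $p_i\circ g\rightrightarrows (v\circ f)\circ g=v\circ(f\circ g)=v$ uniformly on $K$. By linearity $\overline{L}(p_i\circ g)=\tilde L(p_i)=\int_0^1 p_i(t)\,\diff\tilde\mu(t)\to\int_0^1 (v\circ f)(t)\,\diff\tilde\mu(t)$, while the footnote continuity hypothesis applied with $p=v\circ f$ (so $p\circ g=v\in\cV$) gives $\overline{L}(p_i\circ g)\to L(v)$. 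Comparing the two limits and using \Cref{lem:integralTrans} again,
\[
L(v)=\int_0^1 (v\circ f)(t)\,\diff\tilde\mu(t)=\int_K v(x)\,\diff(\tilde\mu\circ f^{-1})(x),
\]
so $\tilde\mu\circ f^{-1}$ represents $L$ and $\supp(\tilde\mu\circ f^{-1})\subseteq f([0,1])=K$; thus $L$ is a $K$-moment functional, and this identity is precisely the asserted final clause. The main obstacle I anticipate is this direction (ii)$\Rightarrow$(i): one cannot freely choose a representing measure for $L$ but must manufacture $\tilde\mu\circ f^{-1}$ from the abstract data $(\overline{L},\tilde\mu)$, and the only bridge is the uniform convergence $p_i\circ g\rightrightarrows v$ on $K$ combined with the precise continuity property of $\overline{L}$ encoded in the footnote — pointwise convergence would not suffice.
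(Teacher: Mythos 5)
Your proposal is correct and follows essentially the same route as the paper: Hahn--Mazurkiewicz for $f$, Jankoff for $g$ with $f\circ g=\id_K$, transport of $\mu$ along $g$ via \Cref{lem:integralTrans} for (i)$\Rightarrow$(ii), and Weierstrass approximation of $v\circ f$ together with the footnote continuity of $\overline{L}$ for (ii)$\Rightarrow$(i). The only cosmetic difference is that you run the polynomial approximation explicitly, whereas the paper phrases the same step as the unique continuous extension of $\tilde{L}$ from $\rset[t]$ to $C([0,1],\rset)$; the underlying mechanism is identical.
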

\begin{proof}
Since $K$ is a compact and path-connected set, by the \Cref{thm:hahnM} there exists a continuous and surjective function $f:[0,1]\to K$. By \Cref{lem:souslin} $[0,1]$ and $K$ are Souslin sets and hence by \Cref{thm:jankoff} there exists a measurable function $g:K\to [0,1]$ such that
\[f(g(x))=x\quad \text{for all}\quad x\in K.\tag{$*$}\]

(i)$\to$(ii): Let $L:\cV\to\rset$ be a $K$-moment functional and $\mu$ be a representing measure of $L$ with $\supp\mu\subseteq K$. $g$ is measurable with $|g|\leq 1$ and hence we have that all $g^d$, $d\in\nset_0$, are $\mu$-integrable by
\[\left|\int_K g(x)^d~\diff\mu(x)\right|\leq\int_K |g(x)|^d~\diff\mu(x)\leq\int_K 1~\diff\mu(x) =\mu(K) = L(1)\tag{\$}\]
and hence $L$ extents to $\rset[g]$. Let $p\in\rset[t]$, then
\[\tilde{L}(p) = L(p\circ g) = \int_K (p\circ g)(x)~\diff\mu(x) \overset{\text{\Cref{lem:integralTrans}}}{=} \int_0^1 p(t)~\diff(\mu\circ g^{-1})(t)\]
and $\mu\circ g^{-1}$ is a representing measure of $\tilde{L}$, i.e., $\tilde{L}$ is a $[0,1]$-moment functional.

(ii)$\to$(i): Let $\tilde{L}:\rset[t]\to\rset$ be a $[0,1]$-moment functional and $\tilde{\mu}$ be its unique representing measure. Since by the Stone--Weierstrass Theorem $\rset[t]$ is dense in $C([0,1],\rset)$ the moment functional $\tilde{L}$ extends uniquely to $C([0,1],\rset)$. For simplicity we denote this extension also $\tilde{L}:C([0,1],\rset)\to\rset$. Since $f:[0,1]\to K$ is continuous we have $v\circ f\in C([0,1],\rset)$ for all $v\in\cV$. By ($*$) we have $v = v\circ f\circ g$ for all $v\in\cV$ and hence
\[L(v) = L(v\circ f\circ g).\tag{\&}\]
But since $v\circ f:[0,1]\to\rset$ is continuous and $\tilde{L}:\rset[t]\to\rset$ uniquely extends to $C([0,1],\rset)$ we have
\[L(v\circ f\circ g) = \tilde{L}(v\circ f).\tag{\#}\]
In summary we get
\[L(v) \overset{(\&)}{=} L(v\circ f\circ g) \overset{(\#)}{=} \tilde{L}(v\circ f) = \int_0^1 (v\circ f)(t)~\diff\tilde{\mu}(t)\overset{\text{Lem.\ \ref{lem:integralTrans}}}{=} \int_K v(x)~\diff(\tilde{\mu}\circ f^{-1})(x)\]
for all $v\in\cV$, i.e., $\tilde{\mu}\circ f^{-1}$ is a representing measure of $L$ and $L$ is therefore a $K$-moment functional.
\end{proof}

We see that all about $L$ is already known if we know how it acts (via $\tilde{L}$) on powers of the fixed (and independent on $L$) function $g$. $\tilde{L}:\rset[t]\to\rset$ is only a Hausdorff moment problem and its representing measure $\tilde{\mu}$ provides a representing measure $\mu = \tilde{\mu}\circ f^{-1}$ via a fixed (and independent on $L$) continuous function $f$.

\begin{rem}
Note, that in \Cref{thm:second} and therefore also in \Cref{cor:second} the condition $1\in\cV$ can be weakened to:
\[\text{There shall exists a}\ v\in\cV\subseteq C(K,\rset)\ \text{such that}\ v>0\ \text{on}\ K.\]
By compactness of $K$ and continuity of $v$ this implies $1\leq c\cdot v\in\cV$ for some $c>0$, i.e., $\mu(K)<\infty$ in (\$). However, since we have to extend $L:\cV\to\rset$ to $\overline{L}:\cV+\rset[g]\to\rset$ and $1\in\rset[g]$ we can assume w.l.o.g.\ already $1\in\cV$. If $1\not\in\cV$ and $L$ can not be extended to $1$, then $L$ can definitely not be extended to $\rset[g]$ and the statements of \Cref{thm:second} and \Cref{cor:second} remain valid.\exmsym
\end{rem}

\Cref{thm:second} requires the existence of a continuous extension $\overline{L}:\cV+\rset[g]\to\rset$ of $L$. Under the very mild condition $1\in\cV$ (resp.\ $v\in\cV$ with $v > 0$ on $K$ by the previous remark) extensions (not necessarily continuous) exist.

\begin{lem}\label{lem:extension}
Let $g$ be as in \Cref{thm:second} (resp.\ \Cref{cor:second}) and $L:\cV\to\rset$ be a linear functional on the vector space $\cV$ with $1\in\cV\subseteq C(K,I_k)$ and $L(1)>0$. Then there exists an extension $\overline{L}:\cV+\rset[g]\to\rset$ of $L:\cV\to\rset$.
\end{lem}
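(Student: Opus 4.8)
The plan is to obtain $\overline{L}$ from the elementary fact that a linear functional on a subspace extends to any larger vector space; the only point needing care is the possibly non-trivial overlap $\cV\cap\rset[g]$, which is exactly why the statement is worth recording.

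First I would note that $\cV+\rset[g]$ is a bona fide real vector space of functions on $K$ containing $\cV$ as a linear subspace. Indeed, each $v\in\cV\subseteq C(K,\rset)$ is bounded because $K$ is compact, and each power $g^d$ is a bounded ($|g|\le 1$) measurable function on $K$, so $\rset[g]$ is a subspace of the bounded measurable functions on $K$ and $\cV+\rset[g]$ is simply the linear span of $\cV\cup\rset[g]$. (No property of $g$ beyond its being a function on $K$ is used here, so the argument is insensitive to whether $g$ maps into $[0,1]$ as in \Cref{thm:second} or into $I_k$ as in \Cref{cor:second}.)

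Next I would put $W:=\cV\cap\rset[g]$ and choose a linear complement $C\subseteq\rset[g]$ with $\rset[g]=W\oplus C$; such a $C$ exists by the usual Hamel-basis argument, and since $\rset[g]$ is generated by the countable family $\{g^d\}_{d\in\nset_0}$ it may in fact be built greedily from the powers $g^0,g^1,g^2,\dots$ without invoking choice beyond the countable case. Then $\cV+\rset[g]=\cV\oplus C$ as a vector space: every $u\in\cV+\rset[g]$ has a representation $u=v+c$ with $v\in\cV$ and $c\in C$, and this representation is unique because $v_1+c_1=v_2+c_2$ forces $v_2-v_1=c_1-c_2$ to lie in $\cV\cap\rset[g]\cap C=W\cap C=\{0\}$. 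I would then define $\overline{L}(v+c):=L(v)$ for $v\in\cV$, $c\in C$. By the uniqueness just established this is well defined, it is plainly linear, and taking $c=0$ gives $\overline{L}|_{\cV}=L$, so $\overline{L}$ is an extension of $L$ to $\cV+\rset[g]$, as claimed.

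I do not expect any real obstacle: the content is entirely the bookkeeping observation that $\cV$ and $\rset[g]$ may share more than the constant function $1=g^0$ (some polynomial expression $p(g)$ might accidentally lie in $\cV$), so one cannot prescribe $\overline{L}$ on $\rset[g]$ independently of $L$ — passing to the complement $C$ of $W=\cV\cap\rset[g]$ removes exactly this ambiguity. Finally, since $1=g^0\in\rset[g]$ and $1\in\cV$ we have $1\in W$, hence $\overline{L}(1)=L(1)>0$ automatically for every extension; thus $L(1)>0$ is not needed for the mere existence of an extension, but it is what guarantees that the induced $\tilde{L}:\rset[t]\to\rset$ with $\tilde{L}(t^d):=\overline{L}(g^d)$ satisfies $\tilde{L}(1)>0$, which is what one wants when using $\tilde{L}$ as a (non-trivial) $[0,1]$-moment functional in \Cref{thm:second}.
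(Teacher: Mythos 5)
Your proof is correct, and it takes a genuinely different (and more elementary) route than the paper. You reduce everything to linear algebra: choose a complement $C$ of $W=\cV\cap\rset[g]$ inside $\rset[g]$ (constructible greedily from the countable spanning family $\{g^d\}_{d\in\nset_0}$), observe $\cV+\rset[g]=\cV\oplus C$, and set $\overline{L}(v+c):=L(v)$; well-definedness follows from $W\cap C=\{0\}$ exactly as you argue. The paper instead fixes a decomposition $f=f_1+f_2$ with $f_1\in\cV$ and $f_2$ in the $\rset[g]$-part (written somewhat informally as $\cV\oplus(\rset[g]\setminus\cV)$ --- your explicit complement $C$ is the clean way to make that decomposition precise), introduces the sublinear majorant $p(f)=|L(f_1)|+L(1)\cdot\|f_2\|_\infty$, which is finite because $|g|\leq 1$, and invokes the Hahn--Banach Theorem to get an extension dominated by $p$. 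Your approach shows that the statement as literally formulated is purely algebraic: any linear functional on a subspace extends, so $1\in\cV$, $|g|\leq 1$, $L(1)>0$ and continuity of the elements of $\cV$ are not needed for bare existence --- consistent with the paper's own remark that the hypotheses of the lemma can be weakened. What the paper's Hahn--Banach construction buys is an extension satisfying the quantitative bound $\overline{L}(f)\leq |L(f_1)|+L(1)\|f_2\|_\infty$, i.e., control of $\overline{L}$ on the $\rset[g]$-part in the supremum norm, which is closer in spirit to the \emph{continuous} extension demanded in \Cref{thm:second}(ii); but since the lemma only claims existence, your extension (which vanishes on the complement $C$) serves just as well. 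Your closing observation is also accurate: $L(1)>0$ plays no role in existence, only in guaranteeing $\tilde{L}(1)=\overline{L}(1)=L(1)>0$ for the induced functional on $\rset[t]$.
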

\begin{proof}
Since $g:K\to I_k\subseteq [0,1]$ in \Cref{thm:second} (resp.\ \Cref{cor:second}) we have $|g|\leq 1$. Hence, $1\in\cV\cap\rset[g]\neq\emptyset$ and $\cV+\rset[g] = \cV\oplus (\rset[g]\setminus\cV)$, i.e., $f=f_1+f_2\in\cV+\rset[g]$ with unique $f_1\in\cV$ and $f_2\in\rset[g]\setminus\cV$. Define
\[p:\cV+\rset[g]\to\rset \qquad\text{by}\qquad p(f) := |L(f_1)| + L(1)\cdot\|f_2\|_\infty\]
for all $f=f_1 + f_2\in\cV+\rset[g]$, $f_1\in\cV$, and $f_2\in\rset[g]\setminus\cV$. Hence, $L(f)\leq p(f)$ for all $f\in\cV$. Then
\[
p(f+g)\leq p(f)+p(g) \qquad\text{and}\qquad p(\alpha\cdot f) = \alpha\cdot p(f)\]
hold for all $f,g\in\cV+\rset[g]$ and $\alpha\geq 0$. By the Hahn--Banach Theorem there exists an extension $\overline{L}:\cV+\rset[g]\to\rset$ of $L$.
\end{proof}

An extension $\overline{L}$ in \Cref{lem:extension} is in general not unique. If $\cV$ is a point separating algebra on $K$ and $L$ is a $K$-moment functional, then the extension $L$ is unique (and continuous), since then the representing measure $\mu$ of $L$ is unique.

For the extension $\overline{L}$ it is only necessary that $1\in\cV$ to ensure $|g|\leq 1\in\cV$. $\cV\subseteq C(K,I_k)$ continuous is actually not necessary and hence \Cref{lem:extension} can be easily weakened.

As in \Cref{thm:first} also in \Cref{thm:second} the functions $f$ and $g$ do not depend on $L$ or $\tilde{L}$. They depend only on $K$. And as in \Cref{prop:oneLebesgueDirection} the functional $\tilde{L}$ is defined in one ``direction'' $\rset[g]\cong\rset[t]$ by $\tilde{L}(t^d) :=\overline{L}(g^d)$. But now it no longer needs to be $L_{\text{Leb}}$ as in \Cref{exm:lebOnUnit}.

%The problem whether $L$ is a moment functional is reduced to the problem whether $\tilde{L}$ on $[0,1]$ is a moment functional. Then a representing measure $\mu$ of $L$ in (\ref{eq:directionTrans}) is determined by a representing measure $\tilde{\mu}$ of $\tilde{L}$: $\mu = \tilde{\mu}\circ f$.

The problem of determining whether $\tilde{L}:\rset[t]\to\rset$ in \Cref{thm:second}(ii) is a $[0,1]$-moment functional is the \Cref{thm:haus}. This problem is fully solved, analytically as well as numerically. But the function $g:K\to [0,1]$ to establish the equivalence (i) $\Leftrightarrow$ (ii) in \Cref{thm:second} is a measurable function and not a polynomial. Hence, $\overline{L}(g^d)$ is not directly accessible unless of course $d=0$. Fortunately, since $K\subset\rset$ is compact, $\rset[x_1,\dots,x_n]$ is dense in $C(K,\rset)$. Hence, for any given finite measure $\mu$ on $K$, i.e., $\mu(K)=L(1)<\infty$, we can approximate $g$ by a polynomial $g_\varepsilon\in\rset[x_1,\dots,x_n]$ in the $L^1(\mu)$-norm to any arbitrary precision.

\begin{thm}\label{thm:secondApprox}
Let $n\in\nset$ be a natural number, $K\subset\rset^n$ be a compact and path-connected set, and let $g:K\to [0,1]$ be from \Cref{thm:second}. Then for any $\varepsilon>0$ and $K$-moment functional $L:\rset[x_1,\dots,x_n]\to\rset$ there exists a polynomial \mbox{$g_\varepsilon\in\rset[x_1,\dots,x_n]$} such that
\[L(|g_\varepsilon-g|) \leq \varepsilon
\qquad\text{and}\qquad
|L(g^d) - L(g_\varepsilon^d)| \leq d\cdot L(|g-g_\varepsilon|) \leq d \cdot \varepsilon\]
hold for all $d\in\nset_0$. $g_\varepsilon$ can be chosen to be a square: $g_\varepsilon = p_\varepsilon^2$, $p_\varepsilon\in\rset[x_1,\dots,x_n]$.
\end{thm}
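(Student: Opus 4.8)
The plan is to write the measurable function $g$ as an $L^1$-limit of squares of polynomials with respect to a representing measure of $L$, while keeping the approximants bounded by $1$ on $K$ so that the telescoping estimate for the powers $g^d$ survives the limit. To set this up, fix a representing measure $\mu$ of the $K$-moment functional $L$; it is finite since $\mu(K)=L(1)<\infty$, and for a bounded Borel function $h$ on $K$ we read $L(h):=\int_K h\,\diff\mu$, consistently with the extension in \Cref{thm:second} (the functions $g$, $|g-g_\varepsilon|$, $g^d$ are bounded and measurable on $K$, hence $\mu$-integrable). If $L(1)=0$ then $\mu=0$ and $g_\varepsilon:=0=0^2$ works trivially, so assume $\mu(K)>0$ and fix $\varepsilon>0$. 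Only the two properties that $g$ is measurable and maps $K$ into $[0,1]$ will be used.

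The construction proceeds in two steps. Since $g$ maps into $[0,1]$, the function $\sqrt{g}\colon K\to[0,1]$ is well defined and measurable. Continuous functions are dense in $L^1(K,\mu)$ (Lusin's theorem applies as $\mu$ is a finite Borel measure on the compact metric space $K$), and composing with the $1$-Lipschitz truncation $t\mapsto\max(0,\min(1,t))$ does not increase the $L^1$-distance to a $[0,1]$-valued function; hence there is $h\in C(K,[0,1])$ with $\int_K|\sqrt g-h|\,\diff\mu$ as small as we wish, and because $|h^2-g|=|h-\sqrt g|\,|h+\sqrt g|\le 2\,|h-\sqrt g|$ on $K$ we also get $\int_K|h^2-g|\,\diff\mu$ small. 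Next, by the Stone--Weierstrass theorem choose $p\in\rset[x_1,\dots,x_n]$ with $\sup_K|p-h|<\eta$; then $|p|\le 1+\eta$ on $K$, so the rescaled polynomial $p_\varepsilon:=p/(1+\eta)$ satisfies $|p_\varepsilon|\le 1$ on $K$, and therefore $g_\varepsilon:=p_\varepsilon^2$ takes values in $[0,1]$ on $K$ --- this is the decisive point. A direct estimate gives $|p_\varepsilon-h|\le|p_\varepsilon-p|+|p-h|\le\eta+\eta$ and $|p_\varepsilon+h|\le 2$ on $K$, so $\sup_K|g_\varepsilon-h^2|=\sup_K|p_\varepsilon-h|\,|p_\varepsilon+h|\le 4\eta$, whence
\[ L(|g_\varepsilon-g|)=\int_K|g_\varepsilon-g|\,\diff\mu\le\int_K|g_\varepsilon-h^2|\,\diff\mu+\int_K|h^2-g|\,\diff\mu\le 4\eta\,\mu(K)+2\int_K|\sqrt g-h|\,\diff\mu. \]
Choosing first $h$ and then $\eta$ small enough makes the right-hand side $\le\varepsilon$, which is the first claimed inequality; note that $g_\varepsilon$ is by construction a square $p_\varepsilon^2$ of a polynomial.

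For the inequality on the powers, observe that on $K$ both $g$ and $g_\varepsilon=p_\varepsilon^2$ lie in $[0,1]$, so for every $d\in\nset_0$ the identity $a^d-b^d=(a-b)\sum_{j=0}^{d-1}a^jb^{d-1-j}$ (an empty sum when $d=0$) yields the pointwise bound $|g^d-g_\varepsilon^d|\le d\,|g-g_\varepsilon|$ on $K$. Since $g_\varepsilon^d=p_\varepsilon^{2d}\in\rset[x_1,\dots,x_n]$, the number $L(g_\varepsilon^d)=\int_K g_\varepsilon^d\,\diff\mu$ is the genuine value of the moment functional $L$, and integrating the pointwise bound against $\mu$ gives
\[ |L(g^d)-L(g_\varepsilon^d)|\le\int_K|g^d-g_\varepsilon^d|\,\diff\mu\le d\int_K|g-g_\varepsilon|\,\diff\mu=d\cdot L(|g-g_\varepsilon|)\le d\cdot\varepsilon. \]
The main obstacle is precisely the tension exploited above: an arbitrary Stone--Weierstrass approximant of the \emph{merely measurable} function $\sqrt g$ need not be bounded by $1$ on $K$, and without $g_\varepsilon\le 1$ on $K$ the clean factor $d$ in the last display would have to be replaced by $d\cdot\sup_K\max(g,g_\varepsilon)^{d-1}$, which can be arbitrarily large; the small rescaling $p\mapsto p/(1+\eta)$ is exactly what restores the bound without spoiling the $L^1$-approximation, and it is also what keeps $g_\varepsilon$ a square.
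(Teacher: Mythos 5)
Your proof is correct and follows essentially the same strategy as the paper: write $g=(\sqrt g)^2$, approximate $\sqrt g$ by a polynomial $p_\varepsilon$ with $|p_\varepsilon|\leq 1$ on $K$, set $g_\varepsilon=p_\varepsilon^2$, and then use the telescoping identity $a^d-b^d=(a-b)\sum_{i=0}^{d-1}a^ib^{d-1-i}$ with $a,b\in[0,1]$ to get the factor $d$. The one place you go beyond the paper is worthwhile: the paper simply asserts, from density of polynomials in $L^1(K,\mu)$, that one can choose $p_\varepsilon$ which is \emph{simultaneously} $L^1$-close to $\sqrt g$ and bounded by $1$ on $K$, whereas you actually construct such a $p_\varepsilon$ (first a continuous $[0,1]$-valued $h$ close to $\sqrt g$ in $L^1$, then a sup-norm Stone--Weierstrass approximant rescaled by $1/(1+\eta)$), thereby justifying the boundedness that the geometric-sum estimate genuinely needs.
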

\begin{proof}
$L$ is a $K$-moment functional and therefore has a unique representing measure $\mu$ with $\supp\mu\subseteq K$. $g\geq 0$ and hence there exists a measurable function $p:K\to [0,1]$ such that $g=p^2$. Since $K$ is compact and $\mu(K) = L(1) < \infty$ the polynomials $\rset[x_1,\dots,x_n]$ are dense in $L^1(K,\mu)$. By
\[\left|\int_K p(x)~\diff\mu(x)\right| \leq \int_K |p(x)|~\diff\mu(x) \leq \int_K 1~\diff\mu(x) = L(1)<\infty\]
we have $p\in L^1(K,\mu)$ and therefore for any $\varepsilon>0$ there exists a $p_\varepsilon\in\rset[x_1,\dots,x_n]$ such that $p_\varepsilon\leq 1$ on $K$ and
\[\|p-p_\varepsilon\|_{L^1(K,\mu)} = \int_K |p(x) - p_\varepsilon(x)|~\diff\mu(x) \leq \frac{1}{2}\varepsilon.\]
Set $g_\varepsilon := p_\varepsilon^2$. Then
\begin{multline}\label{eq:pg}
L(|g-g_\varepsilon|) = \int_K |g-g_\varepsilon|~\diff\mu(x)
= \int_K |p^2(x) - p_\varepsilon^2(x)|~\diff\mu(x)\\
= \int_K |p-p_\varepsilon|\cdot |p+p_\varepsilon|~\diff\mu(x)
\leq 2\int_K |p(x) - p_\varepsilon(x)|~\diff\mu(x) \leq \varepsilon.
\end{multline}
For $d=0$ we have $g^0 = g_\varepsilon^0 = 1$, i.e., $L(g^0) = L(1) = L(g_\varepsilon^0)$, and for $d=1$ we have $|L(g)-L(g_\varepsilon)|\leq L(|g-g_\varepsilon|)\leq \varepsilon$. So let $d\geq 2$. Then
\begin{align}\label{eq:gg}
|L(g^d) - L(g_\varepsilon^d)| &\leq L(|g^d - g_\varepsilon^d|) = \int_K |g(x)^d - g_\varepsilon(x)^d|~\diff\mu(x)\notag\\
&= \int_K |g(x)-g_\varepsilon(x)|\cdot \left| \sum_{i=0}^{d-1} g(x)^i\cdot g_\varepsilon(x)^{d-1-i} \right|~\diff\mu(x)\\
&\leq d\cdot\int_K |g(x)-g_\varepsilon(x)|~\diff\mu(x) \leq d\cdot\varepsilon.\notag\qedhere
\end{align}
\end{proof}

Note, the $g_\varepsilon$ not only depends on $\varepsilon>0$ but also on $L$ resp.\ its representing measure $\mu$. Since $g$ is measurable (but not necessarily continuous) it is not possible to get $\sup_{x\in K} |g(x) - g_\varepsilon(x)| \leq \varepsilon$. So $g_\varepsilon$ depends on $L$. Otherwise assume we find a $g_\varepsilon\in\rset[x_1,\dots,x_n]$ such that for any moment functional $L$ (with $L(1)=1$), i.e., measure $\mu$ on $K$ with $\mu(K)=1$, we have $\|g-g_\varepsilon\|_{L^1(K,\mu)}\leq \varepsilon$. Then for $\mu =\delta_x$, $x\in K$, we get
\[\sup_{x\in K} |g(x)-g_\varepsilon(x)| = \sup_{x\in K} \|g-g_\varepsilon\|_{L^1(K,\delta_x)} \leq \varepsilon,\]
a contradiction. So the choice of $g_\varepsilon$ depends on $L$ resp.\ $\mu$.

Additionally, note that in fact we can $g_\varepsilon$ not only chose to be a square, but in fact any power: $g_\varepsilon = p_\varepsilon^k$ for a fixed $k\in\nset$. Just replace $p:=\sqrt{g}$ by $p := \sqrt[k]{g}$ in the proof since $g\geq 0$ and use the geometric series as in (\ref{eq:gg}) also in (\ref{eq:pg}).

In \Cref{cor:first} we extended \Cref{thm:first} from a compact and path-connected $K\subset\rset^n$ to an at most countable union of pairwise disjoint, compact, and path-connected $K_i$'s. In \Cref{thm:second} we required that $K$ is a compact and path-connected set. Since we needed compactness of $[0,1]$ in \Cref{thm:second} we can at least extend \Cref{thm:second} to a finite (disjoint) union of compact and path-connected sets.

\begin{cor}\label{cor:second}
Let $k$, $n\in\nset$ be natural numbers and $K\subset\rset^n$ be the union of finitely many compact, path-connected, and pairwise disjoint sets $K_i$: $K = \bigcup_{i=1}^k K_i$. Then there exists a measurable function
\[g:K\to I_k := \bigcup_{i=1}^{k} \left[\frac{2i-2}{2k-1},\frac{2i-1}{2k-1}\right] \subset [0,1]\]
such that for all linear functionals $L: \cV \to \rset$ with $1\in\cV\subseteq C(K,\rset)$ the following are equivalent:
\begin{enumerate}[(i)]
\item $L:\rset[x_1,\dots,x_n]\to\rset$ is a $K$-moment functional.

\item $L:\cV\to\rset$ continuously extends to $\overline{L}:\cV+\rset[g]\to\rset$ such that $\tilde{L}:\rset[t]\to\rset$ defined by $\tilde{L}(t^d) := \overline{L}(g^d)$ for all $d\in\nset_0$ is a $[0,1]$-moment functional.
\end{enumerate}
\end{cor}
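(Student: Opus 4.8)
The plan is to combine the gluing construction from the proof of \Cref{cor:first} with the argument of \Cref{thm:second}. For the construction of the maps: each $K_i$ is compact and path-connected and each interval $\left[\tfrac{2i-2}{2k-1},\tfrac{2i-1}{2k-1}\right]$ is an affine, hence continuous, image of $[0,1]$, so \Cref{thm:hahnM} supplies a continuous surjection $f_i\colon\left[\tfrac{2i-2}{2k-1},\tfrac{2i-1}{2k-1}\right]\to K_i$. Since the $k$ subintervals are pairwise disjoint and the $K_i$ are pairwise disjoint, $f\colon I_k\to K$ with $f|_{[\frac{2i-2}{2k-1},\frac{2i-1}{2k-1}]}:=f_i$ is well defined and surjective, and it is continuous because each subinterval is clopen in $I_k$. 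Both $I_k$ and $K$ are finite unions of compact sets, hence Souslin (\Cref{lem:souslin}), so \Cref{thm:jankoff} yields measurable $g_i\colon K_i\to\left[\tfrac{2i-2}{2k-1},\tfrac{2i-1}{2k-1}\right]$ with $f_i\circ g_i=\id$; gluing them over the (closed, pairwise disjoint) pieces $K_i$ produces a measurable $g\colon K\to I_k$ with $f(g(x))=x$ for all $x\in K$. This establishes the existence claim, and, exactly as in the proofs of \Cref{cor:first} and \Cref{thm:second}, the remainder uses only these structural properties of $f$ and $g$.

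Direction (i)$\to$(ii) then copies \Cref{thm:second} almost verbatim: if $\mu$ represents $L$, then $|g|\le 1$ makes every $g^d$ $\mu$-integrable, $\overline L(h):=\int_K h\,\diff\mu$ extends $L$ continuously to $\cV+\rset[g]$, and $\tilde L(t^d)=\overline L(g^d)=\int_K g^d\,\diff\mu=\int_0^1 t^d\,\diff(\mu\circ g^{-1})$ by \Cref{lem:integralTrans}, so $\mu\circ g^{-1}$, a measure carried by $g(K)\subseteq I_k\subseteq[0,1]$, represents $\tilde L$. For (ii)$\to$(i) the only genuine difference from \Cref{thm:second} is that $f$ is defined only on $I_k\subsetneq[0,1]$, so one cannot simply write $L(v)=\tilde L(v\circ f)$. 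Instead, let $\tilde\mu$ be the unique representing measure of $\tilde L$ on $[0,1]$, extend $\tilde L$ to $C([0,1],\rset)$ by Stone--Weierstrass, and for $v\in\cV$ pick by Tietze's theorem a $w\in C([0,1],\rset)$ with $w|_{I_k}=v\circ f$. Since $g(K)\subseteq I_k$ we have $w\circ g=(v\circ f)\circ g=v\in\cV$, so approximating $w$ uniformly by polynomials $p_j$ and invoking the continuity of $\overline L$ (its footnote definition applies because $p_j\circ g\rightrightarrows w\circ g=v\in\cV$) gives $\tilde L(p_j)=\overline L(p_j\circ g)\to L(v)$, whence $L(v)=\tilde L(w)=\int_0^1 w\,\diff\tilde\mu$ for every such $w$.

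The step I expect to be the main obstacle is extracting from this the support restriction $\supp\tilde\mu\subseteq I_k$ (which is \emph{not} immediate from ``$\tilde L$ is a $[0,1]$-moment functional''): since the left-hand side above does not depend on the chosen Tietze extension $w$, neither does the right-hand side, i.e.\ $\int_0^1\phi\,\diff\tilde\mu=0$ for every $\phi\in C([0,1],\rset)$ vanishing on $I_k$; testing against bump functions supported in the open gaps $\left(\tfrac{2i-1}{2k-1},\tfrac{2i}{2k-1}\right)$ and using that $\tilde\mu$ is a finite, hence Radon, Borel measure forces $\tilde\mu([0,1]\setminus I_k)=0$. With this, $L(v)=\int_{I_k}(v\circ f)\,\diff\tilde\mu=\int_K v\,\diff(\tilde\mu\circ f^{-1})$ by \Cref{lem:integralTrans}, and $\tilde\mu\circ f^{-1}$ is carried by $f(I_k)=K$, so $L$ is a $K$-moment functional. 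Everything else is routine transcription; note also that compactness of $I_k$ is used in the Stone--Weierstrass step, which is precisely why, in contrast to \Cref{cor:first}, only finite $k$ is admissible here.
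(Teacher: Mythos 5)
Your proof is correct, and the construction of $f$ and $g$ (Hahn--Mazurkiewicz on each $K_i$, Jankoff's Theorem for the one-sided inverses, gluing over the clopen pieces) is exactly the paper's. The difference lies in the direction (ii)$\to$(i): the paper simply declares this step ``verbatim the same'' as in \Cref{thm:second}, but, as you rightly observe, it is not quite verbatim, because $f$ is only defined on $I_k\subsetneq[0,1]$ while the representing measure $\tilde\mu$ of $\tilde{L}$ a priori lives on all of $[0,1]$; one cannot form $\tilde\mu\circ f^{-1}$ or write $L(v)=\tilde{L}(v\circ f)$ without first knowing $\tilde\mu([0,1]\setminus I_k)=0$. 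Your Tietze-extension argument --- showing $\int_0^1 w\,\diff\tilde\mu=L(v)$ for \emph{every} continuous extension $w$ of $v\circ f$ via the footnote's continuity of $\overline{L}$, deducing $\int_0^1\phi\,\diff\tilde\mu=0$ for all $\phi\in C([0,1],\rset)$ vanishing on $I_k$ (e.g.\ by comparing extensions of $1|_{I_k}$, using $1\in\cV$), and then killing the open gaps with bump functions --- supplies precisely the missing localization $\supp\tilde\mu\subseteq I_k$, after which the transcription of \Cref{thm:second} goes through. So your route is the same in outline but more careful at the one genuinely delicate point, and it makes explicit (as does your closing remark on compactness of $I_k$ and Stone--Weierstrass) why only finitely many components are admissible here, in contrast to \Cref{cor:first}.
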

\begin{proof}
For all $i=1,\dots,k$ the sets $K_i$ and $[\frac{2i-2}{2k-1},\frac{2i-1}{2k-1}]$ are compact and path-connected and therefore by the \Cref{thm:hahnM} there exist continuous and surjective functions $f_i:[\frac{2i-2}{2k-1},\frac{2i-1}{2k-1}]\to K_i$. By \Cref{lem:souslin} all $K_i$ and $[\frac{2i-2}{2k-1},\frac{2i-1}{2k-1}]$ are Souslin sets and hence by \Cref{thm:jankoff} there exist measurable functions $g_i:K_i\to [\frac{2i-2}{2k-1},\frac{2i-1}{2k-1}]$ such that $f_i(g_i(x))=x$ for all $x\in K_i$, $i=1,\dots,k$. Define
\begin{align*}
f&:I_k\to K=\bigcup_{i=1}^k K_i \quad\text{by}\ f(x) = f_i(x)\ \text{for}\ x\in K_i
\intertext{and}
g&:K=\bigcup_{i=1}^k K_i\to I_k\quad
\text{by}\ g(x) = g_i(x)\ \text{for}\ x\in \left[\frac{2i-2}{2k-1},\frac{2i-1}{2k-1}\right].
\end{align*}
Then $f(g(x)) = x$ for all $x\in K$ and $I_k\subset [0,1]$.

(i)$\to$(ii) and (ii)$\to$(i) are verbatim the same as in the proof of \Cref{thm:second}.
\end{proof}

We are again facing the problem, that $g$ is measurable but not necessarily a polynomial. But as in \Cref{thm:secondApprox} we can approximate $g$ by polynomials.

\begin{cor}\label{cor:secondApprox}
Let $n,k\in\nset$ be natural numbers, $K\subset\rset^n$ the union of finitely many compact, path-connected, and pairwise disjoint sets $K_i$, $K = \bigcup_{i=1}^k K_i$, and let $g:K\to I_k$ be from \Cref{cor:second}. Then for any $\varepsilon>0$ and $K$-moment functional\linebreak \mbox{$L:\rset[x_1,\dots,x_n]\to\rset$} there exists a polynomial $g_\varepsilon\in\rset[x_1,\dots,x_n]$ such that
\[L(|g_\varepsilon-g|) \leq \varepsilon
\qquad\text{and}\qquad
|L(g^d) - L(g_\varepsilon^d)| \leq d\cdot L(|g-g_\varepsilon|) \leq d \cdot \varepsilon\]
hold for all $d\in\nset_0$. $g_\varepsilon$ can be chosen to be a square: $g_\varepsilon = p_\varepsilon^2$, $p_\varepsilon\in\rset[x_1,\dots,x_n]$.
\end{cor}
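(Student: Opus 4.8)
The plan is to imitate the proof of \Cref{thm:secondApprox} essentially verbatim, because once the function $g$ of \Cref{cor:second} is fixed, the only structural ingredients used are: (1) $L$ has a unique representing measure $\mu$ supported on the compact set $K$ with $\mu(K)=L(1)<\infty$; (2) $g$ is measurable with values in $I_k\subseteq[0,1]$, hence $0\le g\le 1$; and (3) the polynomials $\rset[x_1,\dots,x_n]$ are dense in $L^1(K,\mu)$ since $K\subset\rset^n$ is compact. None of these use path-connectedness of $K$ itself — only of the pieces $K_i$, which is already built into \Cref{cor:second}. So the finite-union hypothesis causes no extra difficulty: $K=\bigcup_{i=1}^k K_i$ is still compact (a finite union of compacts), and $\mu$ is still a finite Radon measure on it.

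First I would invoke \Cref{cor:second} to fix the measurable $g:K\to I_k\subseteq[0,1]$, and note that the $K$-moment functional $L:\rset[x_1,\dots,x_n]\to\rset$ has a unique representing measure $\mu$ with $\supp\mu\subseteq K$ and $\mu(K)=L(1)<\infty$. Since $g\ge 0$, write $g=p^2$ for a measurable $p:K\to[0,1]$. Then $p\in L^1(K,\mu)$ because $\int_K|p|\,\diff\mu\le\int_K 1\,\diff\mu=L(1)<\infty$, and by density of $\rset[x_1,\dots,x_n]$ in $L^1(K,\mu)$ pick $p_\varepsilon\in\rset[x_1,\dots,x_n]$ with $p_\varepsilon\le 1$ on $K$ (truncate if necessary) and $\|p-p_\varepsilon\|_{L^1(K,\mu)}\le\frac{1}{2}\varepsilon$. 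Set $g_\varepsilon:=p_\varepsilon^2$. The factorization $|p^2-p_\varepsilon^2|=|p-p_\varepsilon|\cdot|p+p_\varepsilon|\le 2|p-p_\varepsilon|$ gives $L(|g-g_\varepsilon|)=\int_K|g-g_\varepsilon|\,\diff\mu\le 2\int_K|p-p_\varepsilon|\,\diff\mu\le\varepsilon$.

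For the second estimate I would treat $d=0$ and $d=1$ separately ($g^0=g_\varepsilon^0=1$, so the difference is $0$; for $d=1$ it is immediate), and for $d\ge 2$ use the telescoping identity $g^d-g_\varepsilon^d=(g-g_\varepsilon)\sum_{i=0}^{d-1}g^i g_\varepsilon^{d-1-i}$ together with $0\le g,g_\varepsilon\le 1$ on $K$, which bounds the sum pointwise by $d$. Hence $|L(g^d)-L(g_\varepsilon^d)|\le\int_K|g^d-g_\varepsilon^d|\,\diff\mu\le d\int_K|g-g_\varepsilon|\,\diff\mu\le d\cdot\varepsilon$. The remark that $g_\varepsilon$ may be taken to be any fixed power $p_\varepsilon^k$ carries over by using $p:=\sqrt[k]{g}$ instead.

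I do not expect any genuine obstacle here; the statement is a routine transcription of \Cref{thm:secondApprox} with $[0,1]$ replaced by $I_k$, and the only points to check are bookkeeping ones: that $g$ still takes values in $[0,1]$ (true since $I_k\subseteq[0,1]$), that $K$ is still compact (a finite union of compacts), and that the truncation $p_\varepsilon\le 1$ does not worsen the $L^1$-approximation (it does not, since truncating at $1$ can only decrease the distance to $p\le 1$). If one wants to be careful about obtaining $p_\varepsilon\le 1$, note that replacing any polynomial approximant $q_\varepsilon$ by $\min(q_\varepsilon,1)$ only improves the $L^1(K,\mu)$-distance to $p$; if a \emph{polynomial} bound is desired one may instead approximate and then absorb a factor $\le 1$, but for the stated conclusion (which only needs $0\le g_\varepsilon$ on $K$ and the two inequalities) this subtlety is inessential, exactly as in \Cref{thm:secondApprox}.
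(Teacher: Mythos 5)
Your proposal is correct and follows exactly the route of the paper, which proves \Cref{cor:secondApprox} by observing that since $I_k\subset[0,1]$ the argument of \Cref{thm:secondApprox} applies verbatim: fix $g$ from \Cref{cor:second}, use the finite representing measure on the compact set $K$, write $g=p^2$, approximate $p$ in $L^1(K,\mu)$ by a polynomial $p_\varepsilon$, and run the same factorization and telescoping estimates. Your additional bookkeeping remarks (compactness of the finite union, $I_k\subseteq[0,1]$, truncation at $1$) are exactly the points the paper implicitly relies on.
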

\begin{proof}
Since $I_k\subset [0,1]$ it is verbatim the same as the proof of \Cref{thm:secondApprox}.
\end{proof}

Note, that in \Cref{thm:secondApprox} and \Cref{cor:secondApprox} we have $|\tilde{L}(t^d)|\leq \tilde{L}(1) = L(1)$, i.e., the error bounds $\leq d\cdot\varepsilon$ exceed $2\cdot\tilde{L}(1)$ at some point and become unreasonable.

We have seen in \Cref{thm:first} resp.\ \Cref{cor:first} that a linear functional $L:\cV\to\rset$ is a $K$-moment functional ($K$ is the countable union of compact and path-connected sets) if and only if it can be transformed by a continuous function $f:I\to K$ to a $I$-moment functional ($I$ is the countable union of intervals $[a_i,b_i]\in\rset$).

If we allow not only continuous functions $f$, then we can generalize this. If we drop continuity of $f$ but add bijectivity almost everywhere we find that any functional on a Borel set of $\rset^n$ is a moment functional if and only if we can transform it into a moment functional with representing measure ``Lebesgue measure on $[0,1]$ plus countably many point evaluations'', see (\ref{eq:measureUnitInt}).

\begin{thm}\label{thm:lebPlusPoints}
Let $n\in\nset$ be a natural number, $B\in\fB(\rset^n)$ be a Borel set, and $\cV$ be a vector space of real measurable functions on $B$ with $1\in\cV$. Then the following are equivalent.
\begin{enumerate}[(i)]
\item $L:\cV\to\rset$ is a $B$-moment functional.

\item There exist Borel sets $M\in\fB(B)$ and $N\in\fB([0,1])$ and a bijective and measurable function (isomorphism) $f:[0,1]\setminus N\to B\setminus M$ such that
\begin{equation}\label{eq:measureUnitInt}
L(v) = \int_0^1 v(f(t))~\diff\nu(t) \qquad\text{with}\qquad \nu = c\cdot\lambda + \sum_{i\in\nset} c_i\cdot\delta_{1/i}
\end{equation}
for all $v\in\cV$, where $c$, $c_i\geq 0$ and $c + \sum_{i\in\nset} c_i = L(1)$, i.e., $\nu\circ f^{-1}$ is a representing measure of $L$.
\end{enumerate}
\end{thm}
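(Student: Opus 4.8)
The plan is to reduce everything to the Lebesgue--Rohlin machinery of \Cref{sec:prel}. The direction (ii)$\to$(i) is the easy one: if (\ref{eq:measureUnitInt}) holds, then $\nu$ is a finite nonnegative Borel measure on $[0,1]$ (it is a finite sum/scaling of $\lambda$ and atoms), and $f:[0,1]\setminus N\to B\setminus M$ is Borel measurable, so by \Cref{lem:integralTrans} the pushforward $\nu\circ f^{-1}$ is a nonnegative Borel measure on $B$ (after extending by zero across the $\lambda$-null, hence $\nu$-null, set $N$), and $L(v)=\int_0^1 v(f(t))\,\diff\nu(t)=\int_B v(x)\,\diff(\nu\circ f^{-1})(x)$ for all $v\in\cV$. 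Hence $\nu\circ f^{-1}$ represents $L$ and $L$ is a $B$-moment functional. One should note that $\supp(\nu\circ f^{-1})\subseteq B$ automatically, so no extra work is needed there.

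For (i)$\to$(ii), let $\mu$ be a representing measure of $L$ on $B$. Since $1\in\cV$, $\mu(B)=L(1)<\infty$, so after recording $L(1)$ we may assume $\mu$ is a probability measure (rescale at the end). By \Cref{exm:lebRoh}, $(B,\fB(B),\mu)$ is a Lebesgue--Rohlin space: $B$ is a Borel subset of the complete separable metric space $\rset^n$ and $\mu$ is a Borel measure on it. Now apply \Cref{thm:lebRoh}: there is a set $N_0\in\fB([0,1])$, a set $M_0\in\fB(B)$, both of measure zero (with respect to $\nu$ resp.\ $\mu$), and a point isomorphism $T:[0,1]\setminus N_0\to B\setminus M_0$ with $\nu\circ T^{-1}=\mu$ on $B\setminus M_0$, where $\nu=c\lambda+\sum_{i\in\nset}c_i\delta_{1/i}$ with $c=1-\sum_i c_i$ and the $c_i=\mu(a_i)$ being the masses of the (at most countably many) atoms $a_i$ of $\mu$. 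Set $f:=T$; it is bijective and measurable with measurable inverse by definition of a point isomorphism. Since $\mu(M_0)=0$ and $M_0\in\fB(B)$, modifying $f$ on the $\nu$-null set $N_0$ (e.g.\ extending it to $[0,1]$ by an arbitrary measurable map, or simply keeping the stated domain) changes no integral, so for every $v\in\cV$
\[
L(v)=\int_B v(x)\,\diff\mu(x)=\int_{B\setminus M_0} v(x)\,\diff\mu(x)\overset{\text{\Cref{lem:integralTrans}}}{=}\int_{[0,1]\setminus N_0} v(f(t))\,\diff\nu(t)=\int_0^1 v(f(t))\,\diff\nu(t),
\]
the last equality because $\nu(N_0)=0$. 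Finally multiply $\nu$ (equivalently the constants $c,c_i$) by $L(1)$ to drop the normalization; then $c,c_i\geq 0$ and $c+\sum_i c_i=L(1)$ as required. Taking $M:=M_0$ and $N:=N_0$ completes the construction.

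The main obstacle, such as it is, is bookkeeping rather than a genuine difficulty: one must check that the measurability hypotheses of \Cref{lem:integralTrans} are met (the integrand $v\circ f$ is measurable because $v$ and $f$ are, and $\mu$-integrability of $v$ is exactly the statement that $v\in\cV\subseteq L^1(\mu)$), and one must be careful that the exceptional null sets $N$ and $M$ supplied by \Cref{thm:lebRoh} are honest Borel sets so that the phrase ``isomorphism $f:[0,1]\setminus N\to B\setminus M$'' in (ii) is literally satisfied; the $\modzero$ isomorphism in \Cref{thm:lebRoh} is stated in terms of the completed $\sigma$-algebras, but since $\mu(M_0)=0$ one may enlarge $M_0$ to a Borel superset of the same measure, and similarly for $N_0$, without affecting any integral. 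A second, equally cosmetic point is that the atoms are enumerated as $\delta_{1/i}$, $i\in\nset$; if $\mu$ has only finitely many atoms one simply sets the remaining $c_i=0$, and if $\mu$ is atomless one gets $\nu=c\lambda$ with $c=L(1)$, recovering a variant of \Cref{prop:fromLebesgue}. No step requires more than the quoted theorems.
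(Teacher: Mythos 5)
Your proposal is correct and follows essentially the same route as the paper: both directions rest on \Cref{exm:lebRoh} (so $(B,\fB(B),\mu)$ is a Lebesgue--Rohlin space), \Cref{thm:lebRoh} for the mod-$0$ isomorphism onto $([0,1],\fB([0,1]),c\lambda+\sum_i c_i\delta_{1/i})$, and \Cref{lem:integralTrans} for the change of variables. The only difference is that you spell out the normalization to a probability measure (rescaling by $L(1)$) and the Borel-versus-completed-$\sigma$-algebra bookkeeping, which the paper's proof leaves implicit.
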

\begin{proof}
(ii)$\to$(i): Clear since $\nu\circ f^{-1}$ is a representing measure of $L$.

(i)$\to$(ii): Let $\mu$ be a representing measure of $L$. Then $(B,\fB(B),\mu)$ is by \Cref{exm:lebRoh} a Lebesgue--Rohlin space and therefore by \Cref{thm:lebRoh} isomorph $\modzero$ to $([0,1],\fB([0,1]),\nu)$ with $\nu$ as in (\ref{eq:measureUnitInt}), i.e., there exist Borel sets $M\in\fB(B)$ and $N\in\fB([0,1])$ and a bijective and measurable function $f:[0,1]\setminus N\to B\setminus M$ such that $\nu = \mu\circ f$ and $\mu(M)=\nu(N)=0$. Then by \Cref{lem:integralTrans} for all $v\in\cV$ we have
\begin{align*}
L(v) &= \int_B v(x)~\diff\mu(x) = \int_{B\setminus M} v(f\circ f^{-1})~\diff\mu(x)\\ &= \int_{[0,1]\setminus N} v(f(t))~\diff(\mu\circ f)(t)=\int_0^1 v(f(t))~\diff\nu(t).\qedhere
\end{align*}
\end{proof}

If we drop bijectivity almost everywhere for $f$ then we get \Cref{thm:intro}, i.e., in (\ref{eq:measureUnitInt}) we can chose $c=L(1)$ and $c_i=0$ for all $i\in\nset$.

In \Cref{thm:intro} and \Cref{thm:lebPlusPoints} we can only ensure that $f$ is measurable, but not necessarily continuous or even a polynomial map. The reason is that we can not control the support of a representing measure of $L$. In \Cref{thm:first} we already showed that $f$ can be chosen as continuous and surjective, independent on $L$. But if we restrict the moment functionals resp.\ the support of a representing measure and chose $f$ tailor made for each $K$-moment functional, then $f$ can be chosen to be continuous and surjective and the representing measure will be the Lebesgue measure $\lambda$ on $[0,1]$.

\begin{thm}\label{thm:contFLebesgue}
Let $n\in\nset$, $K\subset\rset^n$ be a compact and path-connected set, $\cV$ be a vector space of real function on $K$, and $L:\cV\to\rset$ be a linear functional. Then the following are equivalent:
\begin{enumerate}[(i)]
\item $L:\cV\to\rset$ is a $K$-moment functional with representing measure $\mu$ such that $\supp\mu = K$.

\item There exists a continuous and surjective function $f:[0,1]\to K$ such that
\[L(v) = \int_0^1 v(f(t))~\diff\lambda(t)\]
for all $v\in\cV$ where $\lambda$ is the Lebesgue measure on $[0,1]$, i.e.,
\[L \quad\overset{f}{\trans}\quad L_{\text{Leb}}:\cL^1([0,1],\lambda)\to\rset.\]
\end{enumerate}
\end{thm}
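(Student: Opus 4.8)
The plan is to read (i)$\Rightarrow$(ii) off \Cref{thm:lebIso} (Bogachev's Theorem 9.7.1) once its hypotheses are verified, and to prove (ii)$\Rightarrow$(i) by an elementary push-forward computation. Since $\lambda\circ f^{-1}$ is automatically a probability measure, the statement is really about a functional whose representing measure has total mass $\mu(K)=1$ (which is forced as soon as $1\in\cV$); I would dispose of this total-mass bookkeeping at the outset, either by passing to $\mu/\mu(K)$ or by simply assuming $\mu$ is a probability measure, and only then invoke the cited theorem.

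For (i)$\Rightarrow$(ii): let $\mu$ be a representing measure of $L$ with $\supp\mu=K$. Since $K\subset\rset^n$ is compact and path-connected, \Cref{thm:hahnM} (cf.\ \Cref{exm:unitInterval}) provides a continuous surjection $[0,1]\twoheadrightarrow K$, so $K$ is a compact metric space that is a continuous image of $[0,1]$. As $\mu$ is a Borel probability measure on $K$ with $\supp\mu=K$, \Cref{thm:lebIso} applies and yields a continuous surjective $f:[0,1]\to K$ with $\mu=\lambda\circ f^{-1}$. Then \Cref{lem:integralTrans} (with $g=f$ there) gives, for every $v\in\cV$,
\[
L(v)=\int_K v(x)~\diff\mu(x)=\int_K v(x)~\diff(\lambda\circ f^{-1})(x)=\int_0^1 v(f(t))~\diff\lambda(t),
\]
which is exactly (ii); the reformulation $L\overset{f}{\trans}\bigl[L_{\text{Leb}}:\cL^1([0,1],\lambda)\to\rset\bigr]$ is then immediate from \Cref{dfn:transform}.

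For (ii)$\Rightarrow$(i): given the continuous surjection $f:[0,1]\to K$ from (ii), put $\mu:=\lambda\circ f^{-1}$, a Borel measure on $K$. By \Cref{lem:integralTrans}, $L(v)=\int_K v~\diff\mu$ for all $v\in\cV$, so $\mu$ represents $L$ and $L$ is a $K$-moment functional. It remains to check $\supp\mu=K$. The inclusion $\supp\mu\subseteq K$ is clear; for the reverse, fix $x\in K$ and an open neighborhood $U\subseteq K$ of $x$. Continuity of $f$ makes $f^{-1}(U)$ relatively open in $[0,1]$, and surjectivity of $f$ makes it nonempty, hence of positive Lebesgue measure, so $\mu(U)=\lambda(f^{-1}(U))>0$. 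Therefore $x\in\supp\mu$, and $\supp\mu=K$.

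I do not expect a serious obstacle: the whole content is the hypothesis-check that lets us cite \Cref{thm:lebIso}, where \Cref{thm:hahnM} supplies precisely the missing ingredient ($K$ is a continuous image of $[0,1]$) for compact path-connected $K\subset\rset^n$. The only mildly delicate points are (a) matching total masses — Lebesgue measure on $[0,1]$ has mass $1$, so one must normalize or implicitly assume $L(1)=1$ — and (b) the easy-but-worth-stating fact that the push-forward of Lebesgue measure under a continuous surjection onto $K$ has full support, which is what makes the converse direction land in part (i) rather than merely producing some $K$-moment functional.
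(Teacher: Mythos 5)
Your proof is correct and takes essentially the same route as the paper: \Cref{thm:hahnM} to verify that $K$ is a continuous image of $[0,1]$, \Cref{thm:lebIso} to obtain $f$ with $\mu=\lambda\circ f^{-1}$, and \Cref{lem:integralTrans} for (i)$\Rightarrow$(ii), together with the push-forward and open-preimage (plus surjectivity) argument for (ii)$\Rightarrow$(i). Your normalization caveat — that (ii) forces $\mu(K)=1$, so one should normalize before invoking \Cref{thm:lebIso}, which is stated for probability measures — is a genuine subtlety that the paper's proof passes over in silence, and your explicit use of surjectivity to get $f^{-1}(U)\neq\emptyset$ is slightly more careful than the paper's wording.
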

\begin{proof}
(i)$\to$(ii): Let $L:\cV\to\rset$ be a $K$-moment functional and let $\mu$ be its unique representing measure with $\supp\mu = K$. Since $K$ is a compact and path-connected set, by the \Cref{thm:hahnM} there exists a continuous and surjective function $\tilde{f}:[0,1]\to K$. By \Cref{thm:lebIso} there exists a continuous and surjective function $f:[0,1]\to K$ such that $\mu = \lambda\circ f^{-1}$. For all $v\in\cV$ we get
\[L(p) = \int_K p(x)~\diff\mu(x) = \int_K p(x)~\diff(\lambda\circ f^{-1})(x) \overset{\text{\Cref{lem:integralTrans}}}{=} \int_0^1 p(f(t))~\diff\lambda(t).\tag{$*$}\]

(ii)$\to$(i): By $(*)$ $\mu = \lambda\circ f^{-1}$ is a representing measure of $L$, i.e., $L$ is a $K$-moment functional. To show that $\supp\mu = K$ holds, let $U\subseteq K$ be open. Since $f$ is continuous, $f^{-1}(U)\subseteq [0,1]$ is open and therefore $\mu(U) = \lambda(f^{-1}(U)) > 0$.
\end{proof}

So far we transformed moment functionals to $[0,1]$-moment functionals. We have seen that e.g.\ $\rset^n$-moment functionals can not be continuously transformed into $[0,1]$-moment functionals. But we can transform $\rset^n$-moment functionals continuously into $[0,\infty)$-moment functionals. We need the following.

\begin{lem}\label{lem:RnSpaceFilling}
Let $n\in\nset$ and $\varepsilon>0$. Then there exists a continuous and surjective function $f_\varepsilon:[0,\infty)\to\rset^n$ with
\[t - \varepsilon\leq \|f_\varepsilon (t)\| \leq t + \varepsilon\]
for all $t\geq 0$ and there exists a measurable function $g_\varepsilon :\rset^n\to[0,\infty)$ such that
\[f_\varepsilon(g_\varepsilon(x))=x\qquad\text{and}\qquad\|x\|-\varepsilon\leq g_\varepsilon(x)\leq\|x\|+\varepsilon\]
for all $x\in\rset^n$.
\end{lem}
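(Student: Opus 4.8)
The plan is to build $f_\varepsilon$ by gluing together, annulus by annulus, scaled copies of a space-filling curve on the unit sphere, and then obtain $g_\varepsilon$ from Jankoff's Theorem. First I would fix a continuous surjection $\sigma:[0,1]\to\sset^{n-1}$ onto the unit sphere (which exists by the Hahn--Mazurkiewicz Theorem, since $\sset^{n-1}$ is compact, connected, and locally connected); one may also arrange $\sigma(0)=\sigma(1)$ so that the pieces match at the endpoints. Next I would partition $[0,\infty)$ into the intervals $I_k=[k,k+1]$, $k\in\nset_0$, and on $I_k$ define $f_\varepsilon$ to sweep out the closed annulus $A_k=\{x:k\le\|x\|\le k+1\}$ by setting, roughly, $f_\varepsilon(t)=r_k(t)\cdot\sigma(t-k)$ where the radial profile $r_k:[k,k+1]\to[k,k+1]$ is a continuous surjection chosen so that $r_k(k)=k$ and $r_k(k+1)=k+1$, making the definition consistent across the shared endpoint $t=k+1$. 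Since $\sigma$ is surjective onto the sphere and $r_k$ is surjective onto $[k,k+1]$, the map $t\mapsto r_k(t)\sigma(t-k)$ is surjective onto $A_k$, hence $f_\varepsilon$ is surjective onto $\bigcup_k A_k=\rset^n$; continuity on each $I_k$ is clear, and the matching of values at integer points gives global continuity.

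The norm estimate is where the $\varepsilon$ enters. Because $\|f_\varepsilon(t)\|=r_k(t)\in[k,k+1]$ for $t\in[k,k+1]$, we automatically get $|\,\|f_\varepsilon(t)\|-t\,|\le 1$, which is not yet good enough. To sharpen this to $\pm\varepsilon$ I would instead partition $[0,\infty)$ into intervals of length $\varepsilon$ (say $I_k=[k\varepsilon,(k+1)\varepsilon]$) and on $I_k$ let $f_\varepsilon$ sweep the thin annulus $\{x:k\varepsilon\le\|x\|\le(k+1)\varepsilon\}$ by $f_\varepsilon(t)=r_k(t)\,\sigma\!\big((t-k\varepsilon)/\varepsilon\big)$ with $r_k:[k\varepsilon,(k+1)\varepsilon]\to[k\varepsilon,(k+1)\varepsilon]$ a continuous surjection fixing the endpoints. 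Then for $t\in I_k$ both $t$ and $\|f_\varepsilon(t)\|=r_k(t)$ lie in the interval $[k\varepsilon,(k+1)\varepsilon]$ of length $\varepsilon$, so $|\,\|f_\varepsilon(t)\|-t\,|\le\varepsilon$, giving $t-\varepsilon\le\|f_\varepsilon(t)\|\le t+\varepsilon$ as required; the endpoint conditions $r_k(k\varepsilon)=k\varepsilon$, $r_k((k+1)\varepsilon)=(k+1)\varepsilon$ together with $\sigma(0)=\sigma(1)$ again ensure global continuity.

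For the second half, I would invoke Jankoff's Theorem: $[0,\infty)$ and $\rset^n$ are Souslin spaces (closed subset of $\rset$, resp.\ all of $\rset^n$, by \Cref{lem:souslin}), and $f_\varepsilon:[0,\infty)\to\rset^n$ is a surjective Borel map since it is continuous, so there is a Borel (hence measurable) $g_\varepsilon:\rset^n\to[0,\infty)$ with $f_\varepsilon(g_\varepsilon(x))=x$ for all $x\in\rset^n$. The remaining bound on $g_\varepsilon$ is then free: from $f_\varepsilon(g_\varepsilon(x))=x$ and the norm estimate for $f_\varepsilon$ applied at $t=g_\varepsilon(x)$ we get $g_\varepsilon(x)-\varepsilon\le\|x\|=\|f_\varepsilon(g_\varepsilon(x))\|\le g_\varepsilon(x)+\varepsilon$, which rearranges to $\|x\|-\varepsilon\le g_\varepsilon(x)\le\|x\|+\varepsilon$.

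The main obstacle I anticipate is purely geometric bookkeeping: making sure the annular pieces genuinely glue into a \emph{globally continuous} surjection. The delicate points are (a) choosing $\sigma$ with $\sigma(0)=\sigma(1)$ so that the curve closes up on each sphere (Hahn--Mazurkiewicz only gives some continuous surjection, but one can precompose with a loop or simply note that $\sset^{n-1}$ with a point doubled is still compact, connected, locally connected; alternatively reparametrize $[0,1]$ so the two endpoints map to the same point), and (b) handling the degenerate innermost region near the origin for the $n\ge 2$ case (when $t\in[0,\varepsilon]$ the "annulus" is a small ball, and surjectivity onto a ball rather than an annulus must be arranged — e.g.\ by letting $r_0$ run over $[0,\varepsilon]$ so that $r_0(t)\sigma(\cdot)$ covers the whole ball of radius $\varepsilon$, using that $t\mapsto r_0(t)\sigma((t)/\varepsilon)$ with $r_0$ surjective onto $[0,\varepsilon]$ does cover that ball since every point of the ball has the form $r\cdot\omega$ with $r\in[0,\varepsilon]$, $\omega\in\sset^{n-1}$). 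Everything else — continuity on each piece, the norm estimates, and the extraction of $g_\varepsilon$ — is routine once the gluing is set up correctly.
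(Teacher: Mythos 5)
Your overall architecture is the same as the paper's (thin annuli of width $\varepsilon$ so that on each piece both $t$ and $\|f_\varepsilon(t)\|$ lie in a common window of length $\varepsilon$, glue at matching endpoints, then get $g_\varepsilon$ from Jankoff's Theorem and read off its bound by plugging $t=g_\varepsilon(x)$ into the estimate for $f_\varepsilon$ — that last part of your argument is exactly the paper's and is fine). But there is a genuine gap in how you cover each annulus. You define the annular piece as $t\mapsto r_k(t)\,\sigma\bigl((t-k\varepsilon)/\varepsilon\bigr)$ and claim it is surjective onto $\{x: k\varepsilon\le\|x\|\le(k+1)\varepsilon\}$ ``since $\sigma$ is surjective and $r_k$ is surjective.'' That is a non sequitur: to hit a given point $x=r\omega$ you need a \emph{single} parameter $t$ with $r_k(t)=r$ \emph{and} $\sigma\bigl((t-k\varepsilon)/\varepsilon\bigr)=\omega$ simultaneously, and separate surjectivity of the two factors does not give this. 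Concretely, take $r_k=\mathrm{id}$ (a continuous surjection fixing the endpoints, so admissible in your construction): then above each direction $\omega$ the only radii attained are the values $r_k(t)$ at those $t$ whose angular parameter lands in $\sigma^{-1}(\omega)$, so the image is a single curve through the annulus, not the annulus. The same objection applies to your treatment of the innermost ball. What you actually need is \emph{joint} surjectivity, i.e.\ a space-filling curve onto the whole annulus (equivalently onto the product $[k\varepsilon,(k+1)\varepsilon]\times\sset^{n-1}$ followed by $(r,\omega)\mapsto r\omega$), not a one-parameter polar graph.

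The repair is exactly what the paper does: apply the Hahn--Mazurkiewicz Theorem (\Cref{thm:hahnM}) directly to each closed annulus $A_k=\{x:(k-1)\varepsilon\le\|x\|\le k\varepsilon\}$ — these are compact, connected and locally connected — to obtain continuous surjections $f_{\varepsilon,k}:[(k-1)\varepsilon,k\varepsilon]\to A_k$, arranged so that consecutive pieces agree at the shared endpoint (a point of norm $k\varepsilon$); gluing then gives the continuous surjection $f_\varepsilon:[0,\infty)\to\rset^n$ with $t-\varepsilon\le\|f_\varepsilon(t)\|\le t+\varepsilon$, and your final step via \Cref{thm:jankoff} and \Cref{lem:souslin} goes through verbatim. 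If you prefer your sphere-times-radius picture, you must replace the single parameter by a space-filling curve $[k\varepsilon,(k+1)\varepsilon]\to[k\varepsilon,(k+1)\varepsilon]\times[0,1]$ before composing with $(r,s)\mapsto r\,\sigma(s)$; your endpoint worry (a) is then harmless (precompose $\sigma$ with a tent map to force $\sigma(0)=\sigma(1)$), but as written the surjectivity claim — the heart of the lemma — does not hold.
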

\begin{proof}
Set
\[A_n := \{x\in\rset^n \,|\, (n-1)\cdot\varepsilon\leq \|x\|\leq n\cdot\varepsilon\}\]
for all $n\in\nset$. Then all $A_n$'s are compact and path-connected and by the \Cref{thm:hahnM} there exist continuous and surjective functions $f_{\varepsilon, n}:[(n-1)\cdot\varepsilon,n\cdot\varepsilon]\to A_n$ for all $n\in\nset$ such that $f_{\varepsilon, n}(n\cdot\varepsilon) = f_{\varepsilon,n+1}(n\cdot\varepsilon)$, i.e., $\|f_{\varepsilon, n}(n\cdot\varepsilon)\|=\|f_{\varepsilon,n+1}(n\cdot\varepsilon)\|=n\cdot\varepsilon$ for all $n\in\nset$. Since $\rset^n = \bigcup_{n\in\nset} A_n$ define $f_\varepsilon:[0,\infty)\to\rset^n$ by $f_\varepsilon|_{[n-1,n]} := f_{\varepsilon,n}$. Then for $t\in [(n-1)\cdot\varepsilon,n\cdot\varepsilon]$ we have
\[t-\varepsilon \leq (n-1)\cdot\varepsilon\leq \|f_\varepsilon(t)\| = \|f_{\varepsilon,n}(t)\|\leq n\cdot\varepsilon \leq t+\varepsilon.\tag{$*$}\]

Since $f:[0,\infty)\to\rset^n$ is surjective and $[0,\infty)$ and $\rset^n$ are Souslin sets by \Cref{lem:souslin} then by \Cref{thm:jankoff} there exists a $g_\varepsilon:\rset^n\to [0,\infty)$ with $f_\varepsilon(g_\varepsilon(x))=x$ for all $x\in\rset^n$. ($*$) implies
\[g_\varepsilon(x)-\varepsilon \leq \|x\| = \|f_\varepsilon(g_\varepsilon(x))\| \leq g_\varepsilon(x) + \varepsilon\]
and therefore $\|x\|-\varepsilon \leq g_\varepsilon(x) \leq \|x\| + \varepsilon$ for all $x\in\rset^n$.
\end{proof}

Similar to \Cref{thm:first} we then get the continuous transformation into $[0,\infty)$-moment functionals.

\begin{thm}\label{thm:last}
Let $n\in\nset$, $f:[0,\infty)\to\rset^n$ be a continuous and surjective function, and $\cV$ be a vector space of measurable functions on $\rset^n$. Then for all linear functionals $L:\cV\to\rset$ the following are equivalent:
\begin{enumerate}[(i)]
\item $L:\cV\to\rset$ is a moment functional.

\item $\tilde{L}:\cV\circ f\to\rset$ defined by $\tilde{L}(v\circ f) := L(v)$ is a $[0,\infty)$-moment functional.
\end{enumerate}
I.e., $L\sctrans\tilde{L}$.
If $\tilde{\mu}$ is a representing measure of $\tilde{L}$, then $\tilde{\mu}\circ f^{-1}$. There exists a function $g:\rset^n\to [0,\infty)$ such that $f(g(x))=x$ for all $x\in\rset^n$ and if $\mu$ is a representing measure of $L$, then $\mu\circ g^{-1}$ is a representing measure of $\tilde{L}$.
\end{thm}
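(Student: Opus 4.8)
The plan is to mimic the structure of the proof of \Cref{thm:first}, replacing the compact path-connected target $[0,1]$ by $[0,\infty)$ and using the space-filling curve $f:[0,\infty)\to\rset^n$ supplied in the hypothesis (its existence for suitable $f$ is guaranteed by \Cref{lem:RnSpaceFilling}, but here $f$ is given). First I would observe that $[0,\infty)$ and $\rset^n$ are Souslin sets by \Cref{lem:souslin} (both are closed subsets of Euclidean space, hence Souslin), and that $f$, being continuous, is Borel measurable. Since $f$ is surjective, \Cref{thm:jankoff} (Jankoff's Theorem) yields a Borel measurable right inverse $g:\rset^n\to[0,\infty)$ with $f(g(x))=x$ for all $x\in\rset^n$. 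As in \Cref{thm:first}, this identity forces $\tilde L$ to be well-defined on $\tilde\cV:=\cV\circ f$: if $v_1\circ f = v_2\circ f$ then $v_1 = v_1\circ f\circ g = v_2\circ f\circ g = v_2$, so each $\tilde v\in\tilde\cV$ has a unique preimage $v\in\cV$ and $\tilde L(v\circ f):=L(v)$ is unambiguous. This also shows $\cV\circ f = \tilde\cV$ with the composition being a bijection $\cV\to\tilde\cV$, which together with surjectivity and continuity of $f$ is exactly what $\sctrans$ requires.

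Next I would prove the two implications. For (i)$\to$(ii): let $\mu$ be a representing measure of $L$ on $\rset^n$. Using $v = v\circ f\circ g$ (from $f\circ g=\id$) and \Cref{lem:integralTrans} applied to $g$,
\[\tilde L(v\circ f) = L(v) = \int_{\rset^n} v(x)~\diff\mu(x) = \int_{\rset^n} (v\circ f)(g(x))~\diff\mu(x) = \int_0^\infty (v\circ f)(t)~\diff(\mu\circ g^{-1})(t),\]
so $\mu\circ g^{-1}$ represents $\tilde L$; since $g$ maps into $[0,\infty)$, this measure is supported in $[0,\infty)$ and hence $\tilde L$ is a $[0,\infty)$-moment functional. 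For (ii)$\to$(i): let $\tilde\mu$ be a representing measure of $\tilde L$ with $\supp\tilde\mu\subseteq[0,\infty)$. Then by \Cref{lem:integralTrans} applied to $f$,
\[L(v) = \tilde L(v\circ f) = \int_0^\infty (v\circ f)(t)~\diff\tilde\mu(t) = \int_{\rset^n} v(x)~\diff(\tilde\mu\circ f^{-1})(x),\]
so $\tilde\mu\circ f^{-1}$ is a representing measure of $L$ and $L$ is a moment functional. The final sentence of the theorem (that $\tilde\mu\circ f^{-1}$ and $\mu\circ g^{-1}$ are the asserted representing measures) is already contained in these two computations.

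I do not expect a genuine obstacle here: the proof is essentially verbatim that of \Cref{thm:first}, the only structural difference being that $[0,\infty)$ is not compact, which is harmless because \Cref{thm:jankoff} and \Cref{lem:souslin} only need the Souslin property, not compactness. The one point that deserves a line of care is \emph{integrability}: in \Cref{lem:integralTrans} one must know that $v\circ f$ (resp.\ $v$) is integrable against the relevant measure before invoking the transformation formula. In direction (i)$\to$(ii) this is automatic because $v\circ f\circ g = v$ is $\mu$-integrable by assumption; in direction (ii)$\to$(i) it is automatic because $v\circ f$ is $\tilde\mu$-integrable by assumption, and \Cref{lem:integralTrans} then delivers integrability of $v$ against $\tilde\mu\circ f^{-1}$ as part of its conclusion. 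So the only mild subtlety is bookkeeping the direction in which \Cref{lem:integralTrans} is applied, and once that is fixed the argument closes exactly as in \Cref{thm:first}.
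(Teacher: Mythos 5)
Your proposal is correct and follows essentially the same route as the paper's proof: Souslin property of $[0,\infty)$ and $\rset^n$ plus surjectivity of $f$ give the Borel right inverse $g$ via Jankoff's Theorem, which yields well-definedness of $\tilde L$, and then both implications are the same two applications of \Cref{lem:integralTrans} producing $\mu\circ g^{-1}$ and $\tilde\mu\circ f^{-1}$ as representing measures. Your added remarks on well-definedness and integrability bookkeeping are slightly more explicit than the paper's text but change nothing in substance.
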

\begin{proof}
Since $\rset^n$ and $[0,\infty)$ are Souslin sets and $f$ is surjective, by \Cref{thm:jankoff} there exists a function $g:\rset^n\to [0,\infty)$ such that $f(g(x))=x$ for all $x\in\rset^n$. It follows that $\tilde{L}$ is well defined by $\tilde{L}(v\circ f) = L(v)$.

(i)$\to$(ii): Let $\mu$ be a representing measure of $L$, then
\begin{multline*}
\tilde{L}(v\circ f) = L(v) = \int_{\rset^n} v(x)~\diff\mu(x) = \int_{\rset^n} v(f(g(x)))~\diff\mu(x)\\ \overset{\text{\Cref{lem:integralTrans}}}{=} \int_0^\infty (v\circ f)(t)~\diff(\mu\circ g^{-1})(t),
\end{multline*}
i.e., $\mu\circ g^{-1}$ is a representing measure of $\tilde{L}$.

(ii)$\to$(i): Let $\tilde{\mu}$ be a representing measure of $\tilde{L}$, then
\[L(v) = \tilde{L}(v\circ f) = \int_0^\infty (v\circ f)(t)~\diff\tilde{\mu}(t) \overset{\text{\Cref{lem:integralTrans}}}{=} \int_{\rset^n} v(x)~\diff(\tilde{\mu}\circ f^{-1})(x),\]
i.e., $\tilde{\mu}\circ f^{-1}$ is a representing measure of $L$.
\end{proof}

\begin{rem}\label{rem:last}
Similar to \Cref{thm:second} we get that for any $\varepsilon>0$ and $g_\varepsilon$ from \Cref{lem:RnSpaceFilling}
\begin{enumerate}[(i)]
\item $L:\rset[x_1,\dots,x_n]\to\rset$ is a moment functional
\end{enumerate}
implies that
\begin{enumerate}[(i)]\setcounter{enumi}{1}
\item $L:\rset[x_1,\dots,x_n]\to\rset$ continuously extends to $\overline{L}:\rset[x_1,\dots,x_n,g]\to\rset$ such that $\tilde{L}:\rset[t]\to\rset$ defined by $\tilde{L}(t^d) := \overline{L}(g^d)$ is a $[0,\infty)$-moment functional, i.e.,
\begin{equation*}
\begin{matrix}
&& L:\rset[x_1,\dots,x_n]\to\rset\\
&& \downtrans\, {\scriptstyle\id_\cX}\\
\tilde{L}:\rset[t]\to\rset & \overset{g}{\trans} & \overline{L}:\rset[x_1,\dots,x_n,g]\to\rset.
\end{matrix}
\end{equation*}
\end{enumerate}
That follows easily from the fact that $0\leq g_\varepsilon(x) \leq \|x\| + \varepsilon \leq \|x\|^2 + 1 + \varepsilon\in\rset[x_1,\dots,x_n]$. However, it is open whether the strong direction (ii)$\to$(i) as in \Cref{thm:second} holds in general. In \Cref{thm:second} compactness of $K$ implied that $\rset[x_1,\dots,x_n]$ is dense in $C(K,\rset)$ and hence $f$ could be approximated and the representing measure of $L$ is unique. On $\rset^n$ both do not hold and hence (ii)$\to$(i) can so far not be ensured in the same fashion as in \Cref{thm:second}.\exmsym
\end{rem}

At the end of this section we want to discuss two things that can easily be missed. The first is a crucial technical remark and the second is a historical one.

For most transformations $\trans$ we required that $f:\cX\to\cY$ is surjective to apply \Cref{thm:jankoff} to get a right-side inverse $g:\cY\to\cX$, i.e., $f(g(y))=y$ for all $y\in\cY$. E.g.\ in \Cref{thm:second} we used this $g$ directly to embed a $[0,1]$-moment functional into an extension $\overline{L}$ of $L$. However, for any $f:\cX\to\cY$ of course $f:\cX\to f(\cY)$ is surjective. If $f$ is continuous and $\cX$ Borel, then $f(\cX)$ remains even a Borel set. Otherwise $f(\cX)$ is at least a Souslin set.

To demonstrate, that $f:\cX\to\cY$ needs to be surjective and the restriction $f:\cX\to f(\cX)$ can not be used, let $L:\rset[x_1,\dots,x_n]\to\rset$ be a linear functional such that $L(p^2)\geq 0$ for all $p\in\rset[x_1,\dots,x_n]$. Let $f\in\rset[x_1,\dots,x_n]$, then define $\tilde{L}:\rset[t]\to\rset$ by $\tilde{L}(t^d) := L(f^d)$ for all $d\in\nset_0$. We have $\tilde{L}(p^2) = L((p\circ f)^2) \geq 0$ for all $p\in\rset[t]$, i.e., $\tilde{L}$ is a Hamburger moment functional and there exists a measure $\nu$ on $\rset$ such that
\[\tilde{L}(p) = \int_\rset p(t)~\diff\nu(t) \qquad\text{for all}\ p\in\rset[t],\]
i.e.,
\begin{equation}\label{eq:extensionWrong}
L(f^d) = \tilde{L}(t^d) = \int_\rset t^d~\diff\nu(t) \qquad\text{for all}\ d\in\nset_0.
\end{equation}
The important thing is, that (\ref{eq:extensionWrong}) does not imply that there exists a $\mu$ such that $L(f^d) = \int_{\rset^n} f^d(x)~\diff\mu(x)$ for all $d\in\nset_0$. \Cref{thm:jankoff} incorrectly applied in (X) would suggest that there is a $g$ such that $f(g(t))=t$, i.e.,
\[\int_\rset t^d~\diff\nu(t) \overset{\text{(X)}}{=} \int_\rset f(g(t))^d~\diff\nu(t) = \int_{\rset^n} f(x)^d~\diff(\nu\circ g^{-1})(x)\]
and hence $\nu\circ g^{-1}$ is a representing measure for $L(f^d)$. Therefore (X) would imply $L(f)\geq 0$ for all $f\in\rset[x_1,\dots,x_n]$ with $f\geq 0$ since $\nu\circ g^{-1}$ is non-negative. Havilands Theorem then shows that $L$ is a moment functional. But for $L$ we only had $L(p^2)\geq 0$ for all $p\in\rset[x_1,\dots,x_n]$ and for $n\geq 2$ there are functionals only with $L(p^2)\geq 0$ which are not moment functionals \cite{berg79,schmud79,friedr84}. This is the contradiction. We have to ensure, that $\supp\nu\subseteq f(\rset^n)$ holds to apply \Cref{thm:jankoff}.

For the historical remark, in this study we frequently encountered the case where a linear functional $L:\cV\to\rset$ (or its transformation) lives on measurable functions $\cV$, i.e., we apparently face the problem that our functions $v\in\cV$ live on a measuable space $(\cX,\cA)$. But a main tool in the moment problem is the Riesz Representation Theorem and it works with (compactly supported) continuous functions on locally compact Hausdorff spaces. While the linear functional is extended to compactly supported continuous functions via e.g.\ the Hahn--Banach Theorem, changing or extending a measurable space $(\cX,\cA)$ to a topological space, especially to a locally compact Hausdorff space, is in general not possible. Another important case where we rather work on a measurable space than a locally compact Hausdorff space is the Richter Theorem.

\begin{thmRichter}[see {\cite[Satz 4]{richte57}}]
Let $\cV$ be a finite-dimensional vector space of measurable functions on a measurable space $(\cX,\cA)$. Then every moment functional $L:\cV\to\rset$ has a finitely atomic representing measure
\[\sum_{i=1}^d c_i\cdot \delta_{x_i}\]
with $c_i>0$, $x_i\in\cX$, and $d = \dim\cV$.
\end{thmRichter}

This theorem of Richter from 1957 was in the broader mathematical community not known (despite the fact that it is stated in this generality e.g.\ in \cite[Thm.\ 1]{kemper68} and more recently in \cite[pp.\ 198--199]{floudas01}). Several attempts where made to generalize a much weaker result from  \cite{tchaka57}. Richter's Theorem can also be called \emph{Richter--Rogosinski--Rosenbloom Theorem} to account for all contributions \cite{richte57,rogosi58,rosenb52}. See \cite{didioConeArXiv} for more on the early history of this theorem.

We include \Cref{sec:daniell} to avoid a similar confusion how to handle the representations of linear functionals of measurable functions which live not necessarily on a locally compact Hausdorff space. This question was already fully answered by P.\ J.\ Daniell in 1918 \cite{daniell18}. To our knowledge this result does not appear in any standard functional analytic textbooks or works on the moment problem. It is treated e.g.\ in \cite[Ch.\ 7.8]{bogachevMeasureTheory} and \cite[Ch.\ 2.5]{federerGeomMeasTheo}. Especially the approach in \cite{federerGeomMeasTheo} via the outer measure gives a simple proof of the general statement which works without any completions in the lattice of functions.

\begin{dfn}\label{dfn:lattice}
Let $\cX$ be a space. We call a set $\cF$ of functions $f:\cX\to\rset$ a \emph{lattice} (\emph{of functions}) if the following holds:
\begin{enumerate}[i)]
\item $c\cdot f\in\cF$ for all $c\geq 0$ and $f\in\cF$,
\item $f+g\in\cF$ for all $f,g\in\cF$,
\item $\inf(f,g)\in\cF$ for all $f,g\in\cF$,
\item $\inf(f,c)\in\cF$ for all $c\geq 0$ and $f\in\cF$, and
\item $g-f\in\cF$ for all $f,g\in\cF$ with $f\leq g$.
\end{enumerate}
\end{dfn}

Some authors require that a lattice of functions is a vector space. But for proving \Cref{thm:daniell} it is only necessary that a lattice is a cone.

\begin{thmDaniell}[P.\ J.\ Daniell 1918 \cite{daniell18}]\label{thm:daniell}
Let $\cF$ be a lattice of functions on a space $\cX$ and let $L:\cF\to\rset$ be such that
\begin{enumerate}[i)]
\item $L(f+g) = L(f) + L(g)$ for all $f,g\in\cF$,
\item $L(c\cdot f) = c\cdot L(f)$ for all $c\geq 0$ and $f\in\cF$,
\item $L(f) \leq L(g)$ for all $f,g\in\cF$ with $f\leq g$,
\item $L(f_n)\nearrow L(g)$ as $n\to\infty$ for all $g\in\cF$ and $f_n\in\cF$ with $f_n\nearrow g$.
\end{enumerate}
There exists a measure $\mu$ on $(\cX,\cA)$ with $\cA := \sigma(\{f^{-1}((-\infty,a]) \,|\, a\in\rset,\ f\in\cF\})$ such that
\[L(f) = \int_\cX f(x)~\diff\mu(x)\]
for all $f\in\cF$.
\end{thmDaniell}

The most impressive part is that the functional $L:\cF\to\rset$ lives only on a lattice $\cF$ of functions $f:\cX\to\rset$ where $\cX$ is a set without any structure. \Cref{thm:daniell} provides a representing measure $\mu$ including the\linebreak \mbox{$\sigma$-algebra} $\cA$ for the measurable space $(\cX,\cA)$.

The proof of \Cref{thm:daniell} we give in \Cref{sec:daniell} is taken from \cite[Thm.\ 2.5.2]{federerGeomMeasTheo} with alterations to fit in the non-outer-measure approach.

Riesz Representation Theorem follows directly from \Cref{thm:daniell}. $C_0(\cX,\rset)$, $\cX$ a locally compact Hausdorff space, is a lattice of functions, (i) and (ii) are the linearity of $L$, (iii) non-negativity of $L$, and the continuity condition (iv) of $L$ follows easily from uniform convergence in $C_0(\cX,\rset)$.

\section{Conclusion and Open Questions}%%%
%%%%%%%%%%%%%%%%%%%%%%%%%%%%%%%%%%%%%%%%%%
\label{sec:open}

We end with some conclusions, outlook, and some open questions which appeared during our investigation.

We gained in \Cref{sec:trans} basic properties of the transformation $\trans$ of linear functioals. Especially in \Cref{thm:transMomProp} that a strong transformation $L\strans K$ implies that $L$ is a moment functional if and only if $K$ is a moment functional. In \Cref{lem:backTrans} we have seen that $L\strans K$ implies the weaker statements $L\trans K$ and $K\trans L$. So it is natural to ask if the reverse holds.

\begin{open}
Does $L\trans K$ and $K\trans L$ imply $L\strans K$?
\end{open}

Additionally, can the requirement of a strong transformation be weakened? While we have seen that surjectivity of $f:\cX\to\cY$ is necessary and can in general not be omitted, it should be possible to weaken the condition that $\cV\circ f = \cU$ from $L:\cV\to\rset$ and $K:\cU\to\rset$. It is in fact only necessary that $\cV$ and $\cU$ (and therefore $L$ and $K$) can be extended to some $\overline{\cV}\supseteq\cV$ and $\overline{\cU}\supseteq\cU$ such that $\overline{\cV}\circ f = \overline{\cU}$.

In \Cref{prop:oneLebesgueDirection} we have seen that for a moment functional $L$ with an atomless representing measure there exists an integrable function $f$ such that $L$ extended to $\overline{L}:\cV+\rset[f]\to\rset$ which obeys $\overline{L}|_{\rset[f]} = L_{\text{Leb}}$, i.e., $\overline{L}(f^d) = \frac{L(1)}{d+1}$ for all $d\in\nset_0$. Because of the simplicity of $L_{\text{Leb}}$ in \Cref{exm:lebOnUnit}, are there other ``directions'', i.e., $f$'s, with similar properties?

\begin{open}
Are there other ``directions'' $f$ with $\overline{L}(f^d) = \frac{L(1)}{d+1}$ or a similar behavior?
\end{open}

The importance of this question is again revealed in \Cref{thm:second} where we have a similar structure in (\ref{eq:directionTrans}):
\begin{equation*}
\begin{matrix}
&& L:\cV\to\rset\\
&& \downtrans\, {\scriptstyle\id_\cX}\\
\tilde{L}:\rset[t]\to\rset & \overset{g}{\trans} & \overline{L}:\cV+\rset[g]\to\rset.
\end{matrix}
\end{equation*}
There exists a function $g:K\to[0,1]$ such that: A linear functional $L:\cV\to\rset$ is a $K$-moment problem if and only if it continuously extends to some $\overline{L}:\cV+\rset[g]\to\rset$ and $\tilde{L}:\rset[t]\to\rset$ defined by $\tilde{L}(t^d):=\overline{L}(g^d)$ for all $d\in\nset_0$ is a $[0,1]$-moment functional.

At this point the reader shall be reminded of the following functional analytic fact. Let $L:\rset[x_1,\dots,x_n]\to\rset$ be a linear functional with $L(p^2)\geq 0$ for all $p\in\rset[x_1,\dots,x_n]$. $(\cset[x_1,\dots,x_n],\langle\,\cdot\,,\,\cdot\,\rangle)$ with $\langle p,q\rangle:= L(p\cdot \overline{q})$ is a pre-Hilbert space via complexification of $L$ by linearity (and removing the possible kernel of $L$), and for all $i=1,\dots,n$ the multiplication operators $X_i$ are defined by $(X_ip)(x_1,\dots,x_n) := x_i\cdot p(x_1,\dots,x_n)$ for all $p\in\cset[x_1,\dots,x_n]$. $(X_1,\dots,X_n)$ is a tuple of commuting symmetric operators on $(\cset[x_1,\dots,x_n],\langle\,\cdot\,,\,\cdot\,\rangle)$. Then $L$ is a moment functional if and only if $(X_1,\dots,X_n)$ extends to a tuple $(\overline{X_1},\dots,\overline{X_n})$ of communting self-adjoint operators on some Hilbert space $\cH \supset (\cset[x_1,\dots,x_n],\langle\,\cdot\,,\,\cdot\,\rangle)$.

But extending $L$ to $\rset[x_1,\dots,x_n,g]\supseteq\rset[x_1,\dots,x_n]+\rset[g]$ gives
\begin{equation*}
\begin{matrix}
&& L:\rset[x_1,\dots,x_n]\to\rset\\
&& \downtrans\, {\scriptstyle\id_\cX}\\
\tilde{L}:\rset[t]\to\rset & \overset{g}{\trans} & \overline{L}:\rset[x_1,\dots,x_n,g]\to\rset.
\end{matrix}
\end{equation*}
By \Cref{thm:second} it is sufficient to ensure that the multiplication operator $G$ on $\cset[x_1,\dots,x_n,g]$, i.e., $(Gp)(x) := g(x)\cdot p(x)$, has a self-adjoint extension. So the tuple $(X_1,\dots,X_n)$ is replaced by $G$ and the open question is loosely the following:

\begin{open}
What is the functional analysis behind the $g$ in \Cref{thm:second}?
\end{open}

Note, that in the setting of \Cref{thm:second} the multiplication operators are bounded since $K$ is compact. In the setup of $K=\rset^n$, see \Cref{rem:last}, we have in general unbounded operators and only the easy direction (i)$\to$(ii) was shown. It is open if (ii)$\to$(i) also holds in the unbounded case.

\begin{open}
Does (ii)$\to$(i) in \Cref{rem:last} holds in general or is there a counter example?
\end{open}

In \Cref{thm:secondApprox} we have seen that this $g$ in \Cref{thm:second} can be approximated by polynomials $g_\varepsilon\in\rset[x_1,\dots,x_n]$. So a natural question (especially in applications) is to ask the following:

\begin{open}
How does $\deg g_\varepsilon$ of $g_\varepsilon$ in \Cref{thm:secondApprox} grow with $\varepsilon\to 0$?
\end{open}

The reason that $g$ in \Cref{thm:second} is only a measurable function but not a polynomial even for $\cV = \rset[x_1,\dots,x_n]$ is a consequence of the reduction of the dimension. We reduce the dimension of $K$, in general $\dim K\geq 2$, to $1$, i.e., the dimension of $[0,1]$. However, a transformation $\overset{f}{\trans}$ not necessarily needs to reduce the dimension of $K$.

To remain in the algebraic setup we have to investigate transformations $\overset{f}{\trans}$ of linear functionals on $\rset[x_1,\dots,x_n]$ where $f$ is a (bi)rational or polynomial function. Since a linear functional $L$ is a moment functional if and only if $L(f)\geq 0$ for all $f\geq 0$ on $K$, $f\in\rset[x_1,\dots,x_n]$, i.e., it has long been known that moment functionals are closely related to non-negative polynomials (Haviland Theorem), these transformations of moment functionals with (bi)rational or polynomial functions might give deeper insight into non-negative polynomials.

\begin{open}
Do transformations $\overset{f}{\trans}$ of moment functionals with polynomial or (bi)rational  $f$ give deeper insight into/characterizations of non-negative polynomials?
\end{open}

\appendix
\section{Daniell's Representation Theorem}\label{sec:daniell}%%%
%%%%%%%%%%%%%%%%%%%%%%%%%%%%%%%%%%%%%%%%%%%%%%%%%%%%%%%%%%%%%%%%

In this section we give a proof of \Cref{thm:daniell} from 1918 \cite{daniell18} in more recent mathematical notations following the proof in \cite[Thm.\ 2.5.2]{federerGeomMeasTheo}.

\begin{dfn}\label{dfn:outerMeasure}
Let $\cX$ be a set. A set function
$\mu: \cP(\cX)\to [0,\infty]$
with
\begin{enumerate}[i)]
\item $\mu(\emptyset) = 0$,
\item $\mu(A) \leq \mu(B)$ for all $A\subseteq B\subseteq\cX$,
\item $\mu\left(\bigcup_{i=1}^\infty A_i\right) \leq \sum_{i=1}^\infty \mu(A_i)$ for all $A_i\in\cX$
\end{enumerate}
is called a (\emph{Carath\'eodory)} \emph{outer measure}.
\end{dfn}

\begin{dfn}\label{dfn:CaraMeasurable}
For an outer measure $\mu$ on $\cX$ a set $A\subseteq\cX$ is called (\emph{Carath\'eodory}) \emph{$\mu$-measurable} if for every $E\subseteq\cX$ we have
$\mu(E) = \mu(E\cap A) + \mu(E\setminus A)$.
\end{dfn}

\begin{rem}\label{rem:CaraMeasurable}
Since by \Cref{dfn:outerMeasure}(iii) we always have
\[\mu(E) = \mu((E\cap A)\cup (E\setminus A)) \leq \mu(E\cap A) + \mu(E\setminus A)\]
it is sufficient for $\mu$-measurability to test
\begin{equation}\label{eq:measurableTest}
\mu(E) \geq \mu(E\cap A) + \mu(E\setminus A).
\end{equation}
\end{rem}

An outer measure is in fact a measure on all its measurable sets.

\begin{thm}\label{thm:outerMeasure}
Let $\mu$ be an outer measure on a set $\cX$ and $\cA_\mu\subseteq\cP(\cX)$ be the set of all $\mu$-measurable sets. Then $\cA_\mu$ is a $\sigma$-algebra of $\cX$ and $\mu$ is a measure on $(\cX,\cA_\mu)$.
\end{thm}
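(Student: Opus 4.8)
The plan is to run the classical Carath\'eodory argument. First I would dispatch the trivial structural facts: $\emptyset\in\cA_\mu$ (and hence $\cX\in\cA_\mu$) because $\mu(\emptyset)=0$ makes the test \eqref{eq:measurableTest} automatic for every $E$, and $\cA_\mu$ is closed under complementation because the condition in \Cref{dfn:CaraMeasurable} is symmetric under $A\leftrightarrow\cX\setminus A$ (it merely swaps the two summands $E\cap A$ and $E\setminus A$). So it remains to prove closure under countable unions and countable additivity.

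Next I would show $\cA_\mu$ is an algebra by proving closure under finite unions. Given $A,B\in\cA_\mu$ and an arbitrary test set $E\subseteq\cX$, apply the defining identity for $A$ to $E$, then the defining identity for $B$ to $E\setminus A$, to get
\[
\mu(E)=\mu(E\cap A)+\mu\big((E\setminus A)\cap B\big)+\mu\big(E\setminus(A\cup B)\big).
\]
Since $E\cap(A\cup B)=(E\cap A)\cup\big((E\setminus A)\cap B\big)$, subadditivity (\Cref{dfn:outerMeasure}(iii)) bounds the first two terms below by $\mu(E\cap(A\cup B))$, so $\mu(E)\ge\mu(E\cap(A\cup B))+\mu(E\setminus(A\cup B))$, which by \Cref{rem:CaraMeasurable} is precisely $\mu$-measurability of $A\cup B$. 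Along the way I would record finite additivity: for disjoint $A,B\in\cA_\mu$, testing the defining identity of $A$ against $E=(A\cup B)$ gives $\mu(A\cup B)=\mu(A)+\mu(B)$, and more generally, for pairwise disjoint $A_1,\dots,A_n\in\cA_\mu$, an induction using measurability of the partial unions yields $\mu\big(E\cap\bigcup_{i=1}^n A_i\big)=\sum_{i=1}^n\mu(E\cap A_i)$ for every $E$.

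Finally, the crux: countable unions. For pairwise disjoint $A_i\in\cA_\mu$ set $B_n=\bigcup_{i=1}^n A_i\in\cA_\mu$ and $B=\bigcup_{i=1}^\infty A_i$. For any $E$, using $B_n\in\cA_\mu$, the inductive identity above, and monotonicity $\mu(E\setminus B_n)\ge\mu(E\setminus B)$,
\[
\mu(E)=\mu(E\cap B_n)+\mu(E\setminus B_n)\ge\sum_{i=1}^n\mu(E\cap A_i)+\mu(E\setminus B);
\]
letting $n\to\infty$ and then applying countable subadditivity to $E\cap B=\bigcup_i(E\cap A_i)$ gives
\[
\mu(E)\ \ge\ \sum_{i=1}^\infty\mu(E\cap A_i)+\mu(E\setminus B)\ \ge\ \mu(E\cap B)+\mu(E\setminus B)\ \ge\ \mu(E),
\]
so every inequality is an equality: $B\in\cA_\mu$, and choosing $E=B$ yields $\mu(B)=\sum_i\mu(A_i)$. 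A general countable union $\bigcup_n A_n$ with $A_n\in\cA_\mu$ is reduced to this case by the standard disjointification $A_n\setminus\bigcup_{i<n}A_i$, which lies in $\cA_\mu$ by the algebra property already established. This proves $\cA_\mu$ is a $\sigma$-algebra and $\mu|_{\cA_\mu}$ is a measure. I expect the main obstacle to be the bookkeeping in the third step: getting the inductive identity $\mu(E\cap B_n)=\sum_{i=1}^n\mu(E\cap A_i)$ right (it needs measurability of the partial unions $B_n$, not just of the individual $A_i$) and setting up the sandwich of inequalities so that the limit $n\to\infty$ closes the loop; the rest is routine.
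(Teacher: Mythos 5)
Your argument is correct and complete: it is the classical Carath\'eodory proof (closure under complements and finite unions, the inductive identity $\mu(E\cap B_n)=\sum_{i=1}^n\mu(E\cap A_i)$, the sandwich of inequalities for countable disjoint unions, and disjointification for the general case), and the only quibble is cosmetic --- the inductive identity really only needs measurability of the individual $A_i$ (apply the defining identity of $A_n$ to the test set $E\cap B_n$), though using $B_n\in\cA_\mu$ as you do is equally valid since the algebra property is already in hand. The paper itself gives no proof of \Cref{thm:outerMeasure} but defers to the literature (Bogachev, Thm.\ 1.11.4(iii)), and the argument found there is essentially the one you wrote out, so your proposal matches the intended route.
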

\begin{proof}
See e.g.\ \cite[Thm.\ 1.11.4(iii)]{bogachevMeasureTheory}.
\end{proof}

Outer measures give another characterization of measurable functions.

\begin{lem}\label{lem:equivMeasurable}
Let $\mu$ be an outer measure on $\cX$ and $f:\cX\to [-\infty,\infty]$ be a function. Then $f$ is $\mu$-measurable if and only if
\[\mu(A) \geq \mu(\{x\in\cX \,|\, f(x)\leq a\}) + \mu(\{x\in\cX \,|\, f(x)\geq b\})\]
for all $A\subseteq\cX$ and $-\infty< a < b < \infty$.
\end{lem}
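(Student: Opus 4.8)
The plan is to run both implications through the Carath\'eodory criterion of \Cref{dfn:CaraMeasurable} applied to the sublevel sets $\{f\le a\}:=\{x\in\cX:f(x)\le a\}$, using that $f$ is $\mu$-measurable if and only if $\{f\le a\}\in\cA_\mu$ for every $a\in\rset$. Indeed, $\cA_\mu$ is a $\sigma$-algebra by \Cref{thm:outerMeasure}, so membership of all the $\{f\le a\}$ in $\cA_\mu$ forces $\{f<a\}=\bigcup_k\{f\le a-\frac1k\}$, $\{f\ge a\}$, $\{f>a\}$, and hence $f^{-1}(B)$ for every Borel $B$, to lie in $\cA_\mu$, i.e.\ $f$ is $\mu$-measurable; conversely any $\mu$-measurable $f$ has all sublevel sets in $\cA_\mu$. (We read the asserted inequality as $\mu(A)\ge\mu(A\cap\{f\le a\})+\mu(A\cap\{f\ge b\})$, the term $\cap A$ being needed on the right.) The forward implication is then immediate: if $f$ is $\mu$-measurable, fix $A\subseteq\cX$ and $a<b$; $\mu$-measurability of $\{f\le a\}$ gives $\mu(A)=\mu(A\cap\{f\le a\})+\mu(A\setminus\{f\le a\})$ by \Cref{dfn:CaraMeasurable}, and since $\{f\le a\}\cap\{f\ge b\}=\emptyset$ we have $A\cap\{f\ge b\}\subseteq A\setminus\{f\le a\}$, so monotonicity of the outer measure (\Cref{dfn:outerMeasure}(ii)) finishes it.

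For the converse, assume the inequality for all $A$ and $a<b$, fix $a\in\rset$, and aim to show $\{f\le a\}\in\cA_\mu$; by \Cref{rem:CaraMeasurable} it suffices to check $\mu(E)\ge\mu(E\cap\{f\le a\})+\mu(E\cap\{f>a\})$ for every $E\subseteq\cX$. Writing $\{f>a\}=\bigcup_{n\ge1}\{f\ge a+\frac1n\}$ as an increasing union and applying the hypothesis with $b=a+\frac1n$ yields
\[
\mu(E)\ge\mu(E\cap\{f\le a\})+\mu\!\left(E\cap\{f\ge a+\tfrac1n\}\right)\qquad\text{for all }n,
\]
so the whole converse reduces to the increasing-sets identity $\lim_{n\to\infty}\mu(E\cap\{f\ge a+\frac1n\})=\mu(E\cap\{f>a\})$, whose nontrivial half is ``$\ge$''. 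To obtain it I would decompose $\{f>a\}$ into the disjoint level-set increments $D_0:=\{f\ge a+1\}$ and $D_n:=\{a+\frac1{n+1}\le f<a+\frac1n\}$ for $n\ge1$, split these into their even- and odd-indexed subfamilies, note that inside each subfamily the ranges of $f$ are separated by genuine gaps so that the hypothesis is applicable, and telescope the resulting estimates against $E$; this is exactly the classical argument for measurability of functions with respect to an outer measure, cf.\ \cite[2.3.2]{federerGeomMeasTheo}, which transfers verbatim once ``outer measure of a test set'' is replaced by the quantities above.

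The main obstacle is precisely this increasing-sets limit. Since $\mu$ is merely an outer measure and the increments $D_n$ are not known in advance to lie in $\cA_\mu$, one cannot invoke continuity from below of a genuine measure; moreover mere pairwise $\mu$-separation of the $D_n$ is insufficient (for outer measures it does not even imply finite additivity), so the argument must exploit the nested, totally-$f$-ordered structure of the sublevel sets, which is what the even/odd telescoping in \cite{federerGeomMeasTheo} is designed to handle. Everything outside this single limit --- the reduction just described and the forward direction --- is routine bookkeeping.
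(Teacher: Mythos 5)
Your proposal is correct and is essentially the argument the paper relies on: the paper gives no proof of its own but cites \cite[2.3.2(7)]{federerGeomMeasTheo}, and your reduction to the sublevel-set Carath\'eodory criterion plus the gap-separated even/odd annulus telescoping (to get $\sum_n\mu(D_n)\le 2\mu(E)<\infty$ and hence the increasing-limit step that plain continuity from below cannot supply) is exactly Federer's argument. Your reading of the displayed inequality with the missing $\cap A$ on the right is also the intended one, since the statement as literally printed fails for $A=\emptyset$.
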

\begin{proof}
See e.g.\ \cite[2.3.2(7) pp.\ 74/75]{federerGeomMeasTheo}.
\end{proof}

\begin{dfn}\label{dfn:regularmeasure}
An outer measure $\mu$ is called \emph{regular} if for each set $A\subseteq\cX$ there exists a $\mu$-measurable set $B\subseteq\cX$ with $A\subseteq B$ and $\mu(A) = \mu(B)$.
\end{dfn}

Let $f,g:(\cX,\cA)\to\rset$ be two functions. Then we define $\inf(f,g)$ by $\inf(f,g)(x) := \inf(f(x),g(x))$ for all $x\in\cX$ and similarly $\sup(f,g)$. Additionally, $f\leq g$ iff $f(x)\leq g(x)$ for all $x\in\cX$.

%\begin{dfn}\label{dfn:lattice}
%Let $\cX$ be a space. We call a set $\cF$ of functions $f:\cX\to\rset$ a \emph{lattice} (\emph{of functions}) if the following holds:
%
%\begin{enumerate}[i)]
%\item $c\cdot f\in\cF$ for all $c\geq 0$ and $f\in\cF$,
%\item $f+g\in\cF$ for all $f,g\in\cF$,
%\item $\inf(f,g)\in\cF$ for all $f,g\in\cF$,
%\item $\inf(f,c)\in\cF$ for all $c\geq 0$ and $f\in\cF$, and
%\item $g-f\in\cF$ for all $f,g\in\cF$ with $f\leq g$.
%\end{enumerate}
%\end{dfn}

%Some authors require that a lattice of functions is a vector space. But for proving Daniell's representation theorem it is only necessary that a lattice is a cone, i.e., $f+c\cdot g\in\cF$ for all $f,g,\in\cF$ and $c\geq 0$.

Given a lattice $\cF$ the following result \cite[2.5.1, p.\ 91]{federerGeomMeasTheo} shows that it induces another lattice $\cF^+$ by taking only the non-negative functions.

\begin{lem}
Let $\cF$ be a non-empty lattice on a space $\cX$ and set
\[\cF^+ := \cF\cap\{f:\cX\to\rset \,|\, f\geq 0\}.\]
Then
\begin{enumerate}[i)]
\item $f^+, f^-,|f|\in\cF^+$ for all $f\in\cF$ and

\item $\cF^+$ is a non-empty lattice on $\cX$.
\end{enumerate}
\end{lem}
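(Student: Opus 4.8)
The plan is to derive both parts from the five lattice axioms of \Cref{dfn:lattice}, anchored by the observation that the zero function lies in $\cF$: since $\cF \neq \emptyset$ we may fix some $h \in \cF$, and then $0 = 0 \cdot h \in \cF$ by axiom (i). In particular $0 \in \cF^+$, which settles the non-emptiness assertions in both (i) and (ii) at once.

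For part (i), I would treat $f^-$ first. Writing $f^- = -\inf(f,0)$, axiom (iv) (with $c = 0$) gives $\inf(f,0) \in \cF$, and since $\inf(f,0) \le 0$ pointwise, axiom (v) applied to the ordered pair $\inf(f,0) \le 0$ yields $0 - \inf(f,0) = f^- \in \cF$; as $f^- \ge 0$, this gives $f^- \in \cF^+$. Next, the pointwise identity $f^+ = f + f^-$ together with axiom (ii) shows $f^+ \in \cF$, and $f^+ \ge 0$ puts it in $\cF^+$. Finally $|f| = f^+ + f^- \in \cF$ by axiom (ii), and $|f| \ge 0$, so $|f| \in \cF^+$.

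For part (ii), I would verify the five axioms of \Cref{dfn:lattice} for $\cF^+$ directly. If $f, g \in \cF^+$ and $c \ge 0$, then $c \cdot f$, $f + g$, $\inf(f,g)$, and $\inf(f,c)$ all belong to $\cF$ by the corresponding axioms for $\cF$, and each is manifestly non-negative (using $c \ge 0$ and $f, g \ge 0$); hence each lies in $\cF^+$. If additionally $f \le g$, then $g - f \in \cF$ by axiom (v) and $g - f \ge 0$, so $g - f \in \cF^+$. Combined with $0 \in \cF^+$, this shows $\cF^+$ is a non-empty lattice.

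I do not anticipate a genuine obstacle; the only subtlety is that one must resist writing $f^+$ as a supremum, since \Cref{dfn:lattice} only guarantees closure under infima — the correct route is $f^+ = f + f^-$ via the already-constructed $f^-$. Relatedly, one should not expect $\cF$ to be closed under $\sup$ in general, as $\sup(f,g) = f + g - \inf(f,g)$ would require $\inf(f,g) \le f + g$, which fails for, e.g., $f = g \equiv -1$; but closure under $\sup$ is not needed here.
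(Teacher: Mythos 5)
Your proof is correct and essentially matches the paper's: both arguments hinge on $\inf(f,0)\in\cF$ (axiom (iv) with $c=0$) together with the difference axiom (v), then obtain $|f|=f^++f^-$ and verify the lattice axioms for $\cF^+$ by direct inspection. The only cosmetic difference is the order of construction — the paper first gets $f^+ = f-\inf(f,0)$ from $\inf(f,0)\le f$ (so it never needs $0\in\cF$) and then $f^-=f^+-f$, whereas you build $f^- = 0-\inf(f,0)$ first via the zero function $0=0\cdot h\in\cF$ and then $f^+=f+f^-$; both routes are valid and rely on the same underlying identities.
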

\begin{proof}
i): Since $\inf(f,0)\in\cF$ and $\inf(f,0)\leq f$ we have $f^+ = \sup(f,0) = f - \inf(f,0)\in\cF^+$ for all $f\in\cF$. Since $f \leq f^+ = \sup(f,0)\in\cF$ we have $f^- = f^+ - f\in\cF^+$ for all $f\in\cF$. It follows that $|f| = f^+ + f^- \in\cF^+$ for all $f\in\cF$.

ii): Since $\cF$ is non-empty there is a $f\in\cF$ and by (ii) we have $|f|\in\cF$ and hence $|f|\in\cF^+$. $\cF^+$ is a lattice by directly checking the \Cref{dfn:lattice}.
\end{proof}

Note, that $h_n\nearrow g$ means a sequence $(h_n)_{n\in\nset}$ with $h_1 \leq h_2 \leq ... \leq g$, i.e., point-wise non-decreasing, with $\lim_{n\to\infty} h_n(x) = g(x)$ for all $x\in\cX$. Equivalently, $h_n\searrow 0$ denotes a point-wise non-increasing sequence with $\lim_{n\to\infty} h_n(x) = 0$ for all $x\in\cX$.

%\begin{thmDaniell}\label{thm:daniell}
%Let $\cF$ be a lattice of functions on a space $\cX$ and let $L:\cF\to\rset$ be such that
%
%\begin{enumerate}[i)]
%\item $L(f+g) = L(f) + L(g)$ for all $f,g\in\cF$,
%\item $L(c\cdot f) = c\cdot L(f)$ for all $c\geq 0$ and $f\in\cF$,
%\item $L(f) \leq L(g)$ for all $f,g\in\cF$ with $f\leq g$,
%\item $L(f_n)\nearrow L(g)$ as $n\to\infty$ for all $g\in\cF$ and $f_n\in\cF$ with $f_n\nearrow g$.
%\end{enumerate}
%
%There exists a measure $\mu$ on $(\cX,\cA)$ with $\cA := \sigma(\{f^{-1}((-\infty,a]) \,|\, a\in\rset,\ f\in\cF\})$ such that
%
%\[L(f) = \int_\cX f(x)~\diff\mu(x)\]
%
%for all $f\in\cF$.
%\end{thmDaniell}
%
\begin{proof}[Proof of \Cref{thm:daniell}]
By assumption (iii) we have $L(f) \geq L(0\cdot f) = 0$ for all $f\in\cF^+$.

For any $A\subseteq\cX$ we say a sequence $(f_n)_{n\in\nset}$ \emph{suits} $A$ if and only if $f_n\in\cF^+$ and $f_n \leq f_{n+1}$ for all $n\in\nset$ and
\[\lim_{n\to\infty} f_n(x) \geq 1 \qquad\text{for all}\ x\in A.\]
Note, that we can even assume equality by replacing the $f_n$'s by $\tilde{f}_n = \inf(f_n,1)\in\cF^+$. Then we define
\begin{equation}\label{eq:measureDaniellDfn}
\mu(A) := \inf \left\{\lim_{n\to\infty} L(f_n) \;\middle|\; (f_n)_{n\in\nset}\ \text{suits}\ A\right\}
\end{equation}
which is $\infty$ if there is no sequence $(f_n)_{n\in\nset}$ that suits $A$.

We prove that $\mu$ is an outer measure, see \Cref{dfn:outerMeasure}. By assumption (iii) $L(f_n)$ is a non-negative increasing sequence and therefore $\lim_{n\to\infty} L(f_n)$ exists and is in $[0,\infty]$ and therefore $\mu:\cP(\cX)\to [0,\infty]$. For $A=\emptyset$ the zero sequence $f_n=0\in\cF^+$ is suited and therefore $\mu(\emptyset) = 0$. Let $A\subseteq B\subseteq\cX$, then a suited sequence $(f_n)_{n\in\nset}$ of $B$ is also a suited sequence for $A$ and therefore $\mu(A)\leq\mu(B)$. Let $A_i\subseteq\cX$, $i\in\nset$, and set $A := \bigcup_{i=1}^\infty A_i$. Any suited sequence for $A$ is a suited sequences for all $A_i$. Assume there is an $A_i$ which has no suited sequence, then $A$ has no suited sequence and $\mu(A) = \infty \leq \sum_{i=1}^\infty \mu(A_i) = \infty$. So assume all $A_i$ have suited sequences, say $(f_{i,n})_{n\in\nset}$ suits $A_i$, $i\in\nset$. Then $f_n := \sum_{i=1}^n f_{i,n}$ suits $A$ and
\[\mu(A) \leq \lim_{n\to\infty} L(f_n) = \lim_{n\to\infty} \sum_{i=1}^n L(f_{i,n}) \leq \sum_{i=1}^\infty \lim_{m\to\infty} L(f_{i,m}).\]
Taking the infimum on the right side for all $A_i$'s retains the inequality and gives
\[\mu\left(\bigcup_{i=1}^\infty A_i\right) = \mu(A) \leq \sum_{i=1}^\infty \mu(A_i).\]
Hence, all conditions in \Cref{dfn:outerMeasure} are fulfilled and $\mu$ is an outer measure.

Since $\mu$ is an outer measure on $\cX$ by \Cref{thm:outerMeasure} the set $\tilde{\cA}$ of all $\mu$-measurable sets of $\cX$ is a $\sigma$-algebra and $\mu$ is a measure on $(\cX,\tilde{\cA})$.

It remains to show that all $f\in\cF$ are $\mu$-measurable, $\mu$ is a measure on $(\cX,\cA)$ with $\cA = \sigma(\{f^{-1}((-\infty,a]) \,|\, a\in\rset,\ f\in\cF\})$, and $L(f) = \int_\cX f(x)~\diff\mu(x)$ for all $f\in\cF$.

Since $f = f^+ - f^-$ with $f^+,f^-\in\cF^+$ it is sufficient to show that every function in $\cF^+$ is $\mu$-measurable. So let $f\in\cF^+$. To show that $f$ is $\mu$-measurable it is sufficient to show that $A := f^{-1}((-\infty,a]) = \{x\in\cX \,|\, f(x)\leq a\}\in\cA$ for all $a\in\rset$, i.e., $A$ is $\mu$-measurable by \Cref{dfn:CaraMeasurable} resp.\ \Cref{rem:CaraMeasurable} if (\ref{eq:measurableTest}) holds for all $E\subseteq\cX$. From $E\setminus A = E\cap (\cX\setminus A) = E\cap\{x\in\cX \,|\, f(x)> a\}$ we have to verify
\[\mu(E) \geq \mu(E\cap\{x\in\cX \,|\, f(x)\leq a\})  + \mu(E\cap\{x\in\cX \,|\, f(x)> a\})\]
and by \Cref{lem:equivMeasurable} this is equivalent to
\[\mu(E) \geq \mu(\underbrace{E\cap\{x\in\cX \,|\, f(x)\leq a\}}_{=:E_a})  + \mu(\underbrace{E\cap\{x\in\cX \,|\, f(x)\geq b\}}_{=: E_b}) \tag{$*$}\]
for all $a<b$. For $a<0$ or $\mu(E)=\infty$ ($*$) is trivial, so assume $a\geq 0$ and $\mu(E)<\infty$.

Let $(g_n)_{n\in\nset}$ be a sequence that suits $E$ and set
\[h := (b-a)^{-1}\cdot [\inf(f,b)-\inf(f,a)]\in\cF^+ \qquad\text{and}\qquad k_n := \inf(g_n,h)\in\cF^+.\]
Then we have $0\leq k_{n+1} - k_n \leq g_{n+1} - g_n$,
\begin{align*}
h(x)&=1  \qquad\text{for all}\ x\in\cX\ \text{with}\ f(x)\geq b,
\intertext{and}
h(x)&=0 \qquad\text{for all}\ x\in\cX\ \text{with}\ f(x)\leq a.
\end{align*}
It follows that $(k_n)_{n\in\nset}$ suits $E_b$ and $(g_n-k_n)_{n\in\nset}$ suits $E_a$. Therefore,
\[\lim_{n\to\infty} L(g_n) = \lim_{n\to\infty} [L(g_n-k_n) + L(k_n)] \geq \mu(E_a) + \mu(E_b)\]
and taking the infimum on the left side retains the inequality and proves ($*$). Hence, all $f\in\cF^+$ and therefore all $f\in\cF$ are $\mu$-measurable.

Let us show that $\mu$ remains a measure on $(\cX,\cA)$. Since all $f\in\cF$ are $\mu$- and $\cA$-measurable we have
\[f^{-1}((-\infty,a])\in\tilde{\cA}\]
for all $a\in\rset$ and $f\in\cF$. Therefore,
\[\cA_\mu := \sigma(\{f^{-1}((-\infty,a]) \,|\, a\in\rset,\ f\in\cF\}) \subseteq\tilde{\cA}\]
is a $\sigma$-algebra and we can restrict $\mu$ resp.\ $\tilde{\cA}$ to $\cA$. $\mu$ is a measure on $(\cX,\cA)$.

We show that $L(f) = \int_\cX f(x)~\diff\mu(x)$ holds for all $f\in\cF^+$. Let $f\in\cF^+$ and set
\[f_t := \inf(f,t)\]
for $t\geq 0$. If $\varepsilon>0$ and $k\in\nset$ then
\begin{align*}
0 \leq f_{k\varepsilon}(x) - f_{(k-1)\varepsilon}(x) &\leq \varepsilon \quad\text{for all}\ x\in\cX,\\
f_{k\varepsilon}(x) - f_{(k-1)\varepsilon}(x) &= \varepsilon \quad\text{for all}\ x\in\cX\ \text{with}\ f(x)\geq k\varepsilon,
\intertext{and}
f_{k\varepsilon}(x) - f_{(k-1)\varepsilon}(x) &= 0 \quad\text{for all}\ x\in\cX\ \text{with}\ f(x)\leq (k-1)\varepsilon.
\end{align*}
The constant sequence $(\varepsilon^{-1}\cdot(f_{k\varepsilon} - f_{(k-1)\varepsilon}))_{n\in\nset}$ suits $\{x\in\cX \,|\, f(x)\geq k\varepsilon\}$ and consequently
\begin{alignat*}{2}
L(f_{k\varepsilon} - f_{(k-1)\varepsilon}) &\geq \varepsilon\cdot\mu(\{x\in\cX \,|\, f(x)\geq k\varepsilon\})\\
&\geq \int_\cX f_{(k+1)\varepsilon}(x) - f_{k\varepsilon}(x)~\diff\mu(x)\\
&\geq \varepsilon\cdot \mu(\{x\in\cX \,|\, f(x)\geq (k+1)\varepsilon\})
&&\geq L(f_{(k+2)\varepsilon} - f_{(k+1)\varepsilon}).
\intertext{Summing with respect to $k$ from $1$ to $n$ we find}
L(f_{n\varepsilon}) &\geq \;\;\int_\cX f_{(n+1)\varepsilon}(x) - f_\varepsilon(x)~\diff\mu(x) &&\geq L(f_{(n+2)\varepsilon} - f_{2\varepsilon})
\intertext{and since $f_{n\varepsilon}\nearrow f$ as $n\to\infty$ we get from assumption (iv) for $n\to\infty$}
L(f) &\geq \qquad\int_\cX f(x) - f_\varepsilon(x)~\diff\mu(x) &&\geq L(f - f_{2\varepsilon})
\intertext{which gives again from assumption (iv) for $\varepsilon\searrow 0$}
L(f) & \geq \qquad\quad\;\int_\cX f(x)~\diff\mu(x) &&\geq L(f).
\end{alignat*}
Hence, $L(f) = \int_\cX f(x)~\diff\mu(x)$ for all $f\in\cF^+$.

Finally, for all $f\in\cF$ we have $f = f^+ - f^-$ with $f^+,f^-\in\cF^+$ which implies
\[\int_\cX f(x)~\diff\mu(x) = \int_\cX f^+(x)~\diff\mu(x) - \int_\cX f^-(x)~\diff\mu(x) = L(f^+) - L(f^-) = L(f)\]
where the last equality follows from $f^+ = f + f^-$ and assumption (i).
\end{proof}

\begin{rem}\label{rem:DaniellCondition}
Note that in \Cref{thm:daniell} the assumption (iv) is equivalent to
\begin{enumerate}[\itshape i')]\setcounter{enumi}{3}
\item \textit{$L(h_n)\searrow 0$ as $n\to\infty$ for all $h_n\in\cF$ with $h_n\searrow 0$ as $n\to\infty$}
\end{enumerate}
since $f_n\nearrow g$ implies $f_n\leq g$ and $0\leq h_n=g-f_n\in\cF$:
\[L(g) = L(g-f_n+f_n) = L(g-f_n) + \underbrace{L(f_n)}_{\nearrow L(g)} = \underbrace{L(h_n)}_{\searrow 0} + L(f_n).\]
\end{rem}

The representing measure $\mu$ in \Cref{thm:daniell} is not unique. But the representing measure $\mu$ constructed in (\ref{eq:measureDaniellDfn}) has further properties, see e.g.\ \cite[2.5.3]{federerGeomMeasTheo}. In \cite{daniell18} also a signed version of \Cref{thm:daniell} is proven, see also \cite[Thm.\ 2.5.5]{federerGeomMeasTheo}.

%\bibliographystyle{amsplain}
%\bibliographystyle{amsalpha}
%\bibliography{../../bibdata}

\providecommand{\bysame}{\leavevmode\hbox to3em{\hrulefill}\thinspace}
\providecommand{\MR}{\relax\ifhmode\unskip\space\fi MR }
% \MRhref is called by the amsart/book/proc definition of \MR.
\providecommand{\MRhref}[2]{%
  \href{http://www.ams.org/mathscinet-getitem?mr=#1}{#2}
}
\providecommand{\href}[2]{#2}

\end{document}